\crefname{equation}{}{}  % equations are referenced just as (20)
\crefname{figure}{Figure}{Figures}
\renewcommand{\left}{\mleft}
\renewcommand{\right}{\mright}
\newcommand{\ch}{\mathop{\mathfrak{ch}}}
\newcommand{\Ch}{\operatorname{Ch}}
\newcommand{\CK}{\mathcal H}
\newcommand{\DiffGp}{\mathfrak D}
\newcommand{\CKE}{\widetilde{\mathcal H}}
\newcommand{\counit}{\varepsilon}
\newcommand{\FdB}{\mathsf{FdB}}
\newcommand{\KK}{\mathbb{K}}
\newcommand{\Aut}{\mathrm{Aut}}
\newcommand{\Cycles}{\mathrm{Z}}
\newcommand{\Forests}{\mathcal F} % conflicts with FdB?
\newcommand{\Homol}{\mathrm{H}}
\newcommand{\id}{\mathrm{id}}
\newcommand{\lin}{\operatorname{lin}}
\newcommand{\linex}{e}
\newcommand{\mel}{\operatorname{mel}}
\newcommand{\od}{\operatorname{od}}
\newcommand{\odv}{\operatorname{\mathbf{od}}}
\newcommand{\Posets}{\mathcal{P}}
\newcommand{\rank}{\operatorname{rk}}
\newcommand{\rankv}{\operatorname{\mathbf{rk}}}
\newcommand{\rt}{\operatorname{rt}}
\newcommand{\RioGp}{\mathfrak{R}}
\newcommand{\RioHopf}{\mathsf{Rio}}
\newcommand{\Trees}{\mathcal T}
\newcommand{\Tub}{\mathrm{Tub}}
\newcommand{\leftact}{\mathbin{\rightharpoonup}}
\newcommand{\rightact}{\mathbin{\leftharpoonup}}
\declaretheoremstyle[
    preheadhook={\setlength{\OuterFrameSep}{0pt} \savenotes \begin{leftbar}},
    postfoothook={\end{leftbar} \spewnotes},
]{niceremark}
\declaretheorem[parent=section]{theorem}
\declaretheorem[sibling=theorem]{conjecture,corollary,lemma,proposition}
\declaretheorem[style=niceremark,sibling=theorem]{remark,example}
\title[Algebraic structure of DSEs with multiple insertion places]{The algebraic structure of Dyson--Schwinger equations with multiple insertion places}
\author{Nicholas Olson-Harris and Karen Yeats}
\thanks{KY is supported by an NSERC Discovery grant and by the Canada Research Chairs program.  KY thanks Paul Balduf for useful conversations. NOH thanks Lukas Nabergall for sharing code and for many useful discussions in the early stages of this work.  Thanks also to the referee for their suggestions.}
\newcommand{\Chains}{\mathrm{C}}
\newcommand{\Int}{\mathcal{I}}
\newcommand{\actoncocycle}{\circledast}
\newcommand{\Nat}{\mathbb{Z}_{\geq 0}}
\newcommand{\Natp}{\mathbb{Z}_{\geq 1}}
\begin{document}
\maketitle

\begin{abstract}
We give combinatorially controlled series solutions to Dyson--Schwinger equations with multiple insertion places using tubings of rooted trees and investigate the algebraic relation between such solutions and the renormalization group equation.
\end{abstract}

\section{Introduction}

We explain the algebraic structure of Dyson--Schwinger equations with multiple insertion places, giving nice, combinatorially controlled expansions for the solutions of multiple insertion place Dyson--Schwinger equations in terms of tubings of rooted trees with both vertex and edge decorations.  Along the way we give an algebraic formulation of the renormalization group equation and resolve a conjecture from \cite{nabergall:phd}.  The results are purely algebraic and combinatorial, some of independent interest for researchers in combinatorial Hopf algebras and related areas.  Many of these results first appeared in the PhD thesis of one of us \cite{OH:phd}.

The reader who is not concerned with the physics motivation for these results can skip the rest of this section as well as the first half of \cref{subsec context}.

\medskip

To calculate physical amplitudes in perturbative quantum ﬁeld theory it sufﬁces to understand the renormalized one-particle irreducible (1PI) Green functions in the theory. As series one can obtain the 1PI Green functions by applying renormalized Feynman rules to sums of Feynman graphs. Choosing an external scale parameter $L$ we can think of the Green functions as multivariate series $G(x, L, \theta)$ where $x$ is the perturbative expansion parameter, for us the coupling constant, and the $\theta$ are dimensionless parameters capturing the remaining kinematic dependence.
Dyson--Schwinger equations are coupled integral equations relating the Green functions, the quantum analogues of the equations of motion.

One of us has a long-standing program to find simple combinatorial understandings of the series solutions to Dyson--Schwinger equations.  These first took the form of chord diagram expansions \cite{marie-yeats, cy17, hihn-yeats, cyz19, cy20} and have been recently improved with the new language of tubing expansions by both of us with other authors \cite{tubings}.

These combinatorial solutions to Dyson--Schwinger equations stand in contrast to the Feynman diagram expansions of the Green functions because rather than each Feynman diagram contributing a difficult and algebraically opaque object (its renormalized Feynman integral) each chord diagram or tubing contributes some monomials in the coefficients of the Mellin transforms of the primitive diagrams which drive the Dyson--Schwinger equation and these monomials can be readily read off combinatorially from the chord diagram or tubing.  The tubing expansion has additional benefits: Its origin is algebraic and insightful, coming from the renormalization Hopf algebra and explaining how the original Feynman diagrams map to tubings. It readily generalizes beyond the cases previously solved by chord diagrams \cite{tubings}.  It provides insightful structure for exploring resurgence and non-perturbative properties \cite{bdy}.

All the work so far in this direction has been in the single scale case, which is best exemplified by Green functions for propagator corrections.  In this case we take $L = \log q^2/\mu^2$ where $q$ is the momentum ﬂowing through and $\mu$ is the reference scale for renormalization. In this case there are no further parameters $\theta$.

Prior to the present work, all the work in this direction had a further restriction that only one internal scale of the diagram was considered, that is, we were only working with a single insertion place.  This restriction could be circumvented in the past, so long as all insertions symmetrically were desired, by working with a symmetric insertion place (see \cite{yeats:phd} Section 2.3.3), but this is not a satisfying resolution either physically or mathematically since it does not let us understand the interplay of the different insertion places or give us any control over them.  Herein we resolve this by solving all single scale Dyson--Schwinger equations, allowing multiple distinguished insertion places, using generalized tubing expansions.

Throughout $\KK$ is the underlying field which we take to be of characteristic 0 since this is the case of physical interest.

Specifically, we will be looking to find series solutions $G(x,L)$ to Dyson--Schwinger equations.  For $\mu \in \KK$, the simplest case our Dyson--Schwinger equations look like
\begin{equation}\label{eq simplest dse}
G(x, L) = 1 + \left.xG\left(x, \frac{\partial}{\partial \rho}\right)^\mu (e^{L\rho}-1)F(\rho)\right|_{\rho=0}
\end{equation}
which determines $G(x,L)$ recursively in terms of the coefficients of $F(\rho) = \sum_{i\geq 0}a_i\rho^{i-1}$.  Equivalently, and in the form we'll prefer here
\[
G(x, L) = 1 + x \Lambda(G(x, L)^{\mu})
\]
where $\Lambda$ is a 1-cocycle given by the $a_i$, as will be explained below.  This simple case was solved by a chord diagram expansion \cite{marie-yeats} for negative integer $\mu$, and from there more general forms of Dyson--Schwinger equation solved \cite{hihn-yeats} and further generalizations along with the more conceptual formulation in terms of tubings given \cite{tubings}.  Distinguished insertion places, however, remained open until the present work.  The most general system of Dyson--Schwinger equations that we solve is given in \cref{eqn:multidse system} and our solution to it in \cref{thm:multidse system solution}.

The work to give this solution will be principally Hopf algebraic, working on the Connes--Kreimer Hopf algebra of rooted trees and decorated generalizations thereof.  In particular, the tubing expansion of \cref{eqn:multidse system} will be constructed in two steps, first understanding and solving similar but simpler equations purely at the level of trees (these are often called combinatorial Dyson--Schwinger equations) and explicitly understanding the form of the map from the Connes--Kreimer Hopf algebra  to the polynomial Hopf algebra which comes from the universal property of the Connes--Kreimer Hopf algebra.

\medskip

The paper proceeds as follows.  We will begin by giving background, defining the Connes-Kreimer Hopf algebra and the combinatorial analogues of our Dyson--Schwinger equations in \cref{subsec CK}, then defining the Faà di Bruno Hopf algebra and looking in more detail at 1-cocycles in \cref{subsec hopf poly}.  In \cref{subsec rio} we look at the close relationship between the renormalization group equation and the Riordan group, a relationship which has been underrecognized up to this point.  We finally define the Dyson--Schwinger equations in \cref{subsec dse} and give a clean, conceptual way of understanding how the invariant charge relates to the renormalization group equation via our work on the Riordan group.  The introduction is then rounded out by \cref{sec:equations for gamma} where we consider the anomalous dimension and \cref{subsec tubings} where we define the tubings that we use for our solutions to Dyson--Schwinger equations and state the previous results in that direction from \cite{tubings}.

We then move to the new work on multiple insertion places, giving the physical context and mathematical set up in \cref{subsec context}, extending our discussion of 1-cocycles to tensor powers of a bialgebra in \cref{subsec tensor 1-cocycles}, generalizing the results regarding the invariant charge and the renormalization group equation in \cref{subsec rge multiple} including proving a conjecture of Nabergall \cite{nabergall:phd}.  In \cref{subsec qle} we consider situations where Dyson--Schwinger equations with multiple insertion places can be transformed into single insertion place Dyson--Schwinger equations, first disproving a different conjecture of Nabergall \cite{nabergall:phd} and then focusing on the special case in which the Dyson--Schwinger equations are in an appropriate sense linear.  Our discussion of multiple insertion places culminates in \cref{subsec tubing multiple} where we show how to give combinatorial series solutions to multiple insertion places Dyson--Schwinger equations via tubings, hence giving combinatorial solutions to all single scale Dyson--Schwinger equations.
We conclude in \cref{sec conclusion}.

\section{Background}

\subsection{The Connes-Kreimer Hopf algebra of rooted trees}\label{subsec CK}
The central algebraic structure for us is the Connes--Kreimer Hopf algebra and its variants.

It will be convenient to think of rooted trees first of all as posets, so define a \textit{rooted forest} to be a finite poset such that each element is covered by at most one element and define a connected forest to be a \textit{rooted tree}. Since we only work with rooted trees and forests we will simply say \textit{tree} and \textit{forest} whenever convenient. 

A rooted tree can be decomposed as a unique maximal
element, the \textit{root}, together with a forest. For a rooted tree $t$ we denote by root by $\rt t$. On the other hand a forest uniquely decomposes as a disjoint union of trees.  A disjoint union of forests is a forest and any upset or downset in a forest is a forest.

When convenient we will also use graph theoretic or tree-specific terminology for forests and trees.  In particular, we often refer to elements of trees as vertices and covering
relations as edges. The unique vertex covering a non-root vertex is its \textit{parent}, vertices covered
by a vertex are its \textit{children}. We will think of a tree as oriented downwards (opposite the
order) so that the number of children of a vertex is the outdegree and is denoted $\od(v)$.

We define the \textit{(undecorated) Connes--Kreimer Hopf algebra}, which we denote $\CK$, as follows.  As an algebra $\CK$ is the vector space freely generated by isomorphism classes of forests with multiplication given by disjoint union.  (Equivalently, $\CK$ is the commutative algebra freely generated by isomorphism classes of (nonempty) trees).  We make $\CK$ into a bialgebra by the coproduct
\[
\Delta(F) = \sum_{D \text{ downset of } F}D \otimes (F\setminus D)
\]
for a forest $F$ and extended linearly.  (This is equivalent to other ways of presenting this coproduct using admissible cuts \cite{connes-kreimer} or antichains of vertices \cite{karenbook} which have appeared in the literature.)  We grade $\CK$ by the number of vertices making $\CK$ into a graded connected\footnote{A graded vector space is connected if the $0$th graded piece is isomorphic to the underlying field. The reader is cautioned not to confuse this with the combinatorial notion of connectedness for trees.} bialgebra and hence a Hopf algebra.

Note that following the same construction with all posets in place of forests, we obtain the standard downset/upset Hopf algebra of posets, $\Posets$.

We can also characterize $\CK$ in a more algebraic way. Consider the linear operator $B_+$ on $\Posets$ which sends each poset $P$ to the poset obtained by adjoining a new element larger than all elements of $P$. Then $\CK$ is the unique minimal subalgebra of $\Posets$ which is mapped to itself by $B_+$. From this perspective it is not immediately obvious that $\CK$ should be a Hopf subalgebra. We can understand this by considering the relationship between $B_+$ and the coproduct. Note that the only downset of $B_+P$ which contains the new element is the
entirety of $B_+P$ . The other downsets coincide with the downsets of $P$, and if $D$ is such a downset we have $(B_+P) \setminus D \cong B_+(P \setminus D)$. It follows that
\[
\Delta B_+P = B_+P \otimes 1 + \sum_{D \text{ downset of } P}D \otimes B_+(P\setminus D)
\]
or in other words
\begin{equation}\label{eq cocycle}
  \Delta B_+ = B_+ \otimes 1 + (\id \otimes B_+)\Delta.
\end{equation}
An operator satisfying \cref{eq cocycle} is a 1-cocycle in the cohomology theory that we will discuss in more detail in \cref{subsec hopf poly}. 

It will be useful to work in a more general setting.  Given a set $I$, we define the
\textit{decorated Connes--Kreimer Hopf algebra} $\CK_I$ as follows. By an
\textit{$I$-tree} (resp.  \textit{$I$-forest}) we mean a tree (resp. forest) with each vertex
decorated by an element of $I$.  We will write $\Trees(I)$ for the set of $I$-trees and
$\Forests(I)$ for the set of $I$-forests.  Then $\CK_I$ is the free vector space on $\Forests(I)$,
made into a bialgebra with disjoint union as multiplication and the same coproduct as in $\CK$ but
preserving the decorations on all vertices. As usual, we can grade by the number of vertices and
find that $\CK_I$ is a connected graded bialgebra and hence a Hopf algebra.

\begin{remark}
    We could instead choose some weight function $w\colon I \to \Nat$ and grade $\CK_I$ by total
    weight. If $w$ takes only \textit{positive} values then this grading will also make $\CK_I$
    connected. In the application to Dyson--Schwinger equations we will have such a weight function
    already given by the loop order of the primitive Feynman diagram corresponding to the 1-cocycle and so it is natural to grade the algebra this way, but it won't really matter for
    anything we do.
\end{remark}

For each $i \in I$, we have an operator $B_+^{(i)}$ on $\CK_I$ that sends an $I$-forest to the
$I$-tree obtained by adding a root with decoration $i$. For the same combinatorial reasons as the
usual $B_+$ operator on $\CK$, all of these are 1-cocycles.

The key significance of the Connes--Kreimer Hopf
algebra is that it possesses a universal property with respect to 1-cocycles (\cite{connes-kreimer}
Theorem 2).  The decorated Connes--Kreimer Hopf algebra likewise has a universal property as follows.

\begin{theorem}[{\cite[Proposition 2 and 3]{foissy20}}] \label{thm:ck universal decorated}
    Let $A$ be a commutative algebra and $\{\Lambda_i\}_{i \in I}$ be a family of linear operators
    on $A$. There exists a unique algebra morphism $\phi\colon \CK_I \to A $ such that $\phi B_+^{(i)}
    = \Lambda_i \phi$. Moreover, if $A$ is a bialgebra and $\Lambda_i$ is a 1-cocycle for each $i$
    then $\phi$ is a bialgebra morphism.
  \end{theorem}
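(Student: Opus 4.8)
The plan is to exploit that $\CK_I$ is free as a commutative algebra on the set of nonempty $I$-trees, together with the observation that every such tree is obtained by a single application of some $B_+^{(i)}$ to a forest with strictly fewer vertices. For existence and uniqueness of the algebra morphism, I would define $\phi$ on $I$-trees by induction on the number of vertices: writing a tree $t$ with root decoration $i$ as $t = B_+^{(i)}(F)$, where $F$ is the forest obtained by deleting the root, the desired relation $\phi B_+^{(i)} = \Lambda_i \phi$ forces $\phi(t) = \Lambda_i(\phi(F))$, and $\phi(F)$ is already determined by multiplicativity since $F$ has fewer vertices. Extending multiplicatively with $\phi(1) = 1_A$ yields an algebra morphism, and because \emph{every} tree arises this way, the same computation simultaneously gives existence (the $\phi$ so defined satisfies $\phi B_+^{(i)} = \Lambda_i\phi$ on all forests, hence by linearity on all of $\CK_I$) and uniqueness (any morphism with the intertwining property must agree with $\phi$ on trees, hence on forests, hence everywhere).

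For the bialgebra claim I would first record that each cocycle $\Lambda_i$ is killed by the counit. Applying $\counit_A \otimes \id$ to the identity $\Delta_A \Lambda_i = \Lambda_i \otimes 1 + (\id\otimes\Lambda_i)\Delta_A$ and using the counit axiom gives $\Lambda_i = \counit_A(\Lambda_i(-))\,1_A + \Lambda_i$, so $\counit_A\Lambda_i = 0$. Since $\counit_A\phi$ and $\counit_{\CK_I}$ are both algebra morphisms $\CK_I\to\KK$, it suffices to compare them on the generating trees, where $\counit_A(\phi(B_+^{(i)}F)) = \counit_A(\Lambda_i(\phi F)) = 0 = \counit_{\CK_I}(B_+^{(i)}F)$, matching the counit on nonempty trees.

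The substance is coproduct compatibility, $\Delta_A\phi = (\phi\otimes\phi)\Delta$. Because $\Delta_A\phi$ and $(\phi\otimes\phi)\Delta$ are each algebra morphisms $\CK_I \to A\otimes A$, the set on which they agree is a subalgebra, so it again suffices to check equality on trees, which I would do by induction on the number of vertices. For the step on $t = B_+^{(i)}(F)$, the cocycle property of $\Lambda_i$ on the $A$-side gives $\Delta_A\phi(t) = \Lambda_i(\phi F)\otimes 1 + (\id\otimes\Lambda_i)\Delta_A(\phi F)$, and the inductive hypothesis $\Delta_A(\phi F) = (\phi\otimes\phi)\Delta(F)$ rewrites the last term as $(\phi\otimes\Lambda_i\phi)\Delta(F) = (\phi\otimes\phi B_+^{(i)})\Delta(F)$. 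On the $\CK_I$-side, the $B_+^{(i)}$ version of \eqref{eq cocycle} gives $\Delta(t) = t\otimes 1 + (\id\otimes B_+^{(i)})\Delta(F)$, whence $(\phi\otimes\phi)\Delta(t) = \phi(t)\otimes 1 + (\phi\otimes\phi B_+^{(i)})\Delta(F)$; since $\phi(t) = \Lambda_i(\phi F)$, the two expansions coincide.

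I expect the only genuine care to lie in the coproduct induction: correctly reducing each identity to the tree generators via the fact that all four maps in sight are algebra morphisms, and matching the two expansions term by term. One should also note explicitly that the cocycle identity \eqref{eq cocycle}, stated there for the undecorated $B_+$, holds verbatim for each $B_+^{(i)}$ by the same downset argument (the only downset of $B_+^{(i)}(F)$ containing the new decorated root is everything, and the others are the downsets of $F$), so that no new combinatorial input beyond the decorated analogue of \eqref{eq cocycle} is required.
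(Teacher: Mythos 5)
Your proposal is correct and takes essentially the same route as the paper: the paper (citing Foissy for this statement) defines $\phi$ by exactly the forced recursion $\phi(t) = \Lambda_i(\phi(t_1)\cdots\phi(t_k))$ of \cref{eqn:cocycle morphism recursive}, and its proof of the parallel edge-decorated result (\cref{thm:ck universal multilinear 1}) runs the same induction, using the 1-cocycle identity on both sides to match the two coproduct expansions term by term. Your explicit checks that $\counit_A \Lambda_i = 0$ and that $\phi$ intertwines the counits are small details the paper leaves implicit.
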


 Note that there is nothing mysterious about the map $\phi$ guaranteed by \cref{thm:ck universal decorated}.
 Since any decorated tree $t$ can be written uniquely (up to reordering) in the form $t = B_+^{(i)}(t_1 \cdots t_k)$ for some $t_1, \ldots  , t_k$ and with $i$ the decoration of the root we can and must define $\phi$ recursively by
\begin{equation}\label{eqn:cocycle morphism recursive}
   \phi(t) = \Lambda_i(\phi(t_1) \cdots \phi(t_k)).
\end{equation}
A natural question is whether we can find an explicit, non-recursive formula for $\phi$. Without
knowing anything about $A$ and the $\Lambda_i$ there is clearly nothing we can do, but the central
insight of \cite{tubings} and the present paper is that in the cases that come up in
Dyson--Schwinger equations we can solve this problem in a nice, combinatorial way.

Already in this context we can build classes of trees using equations involving $B_+$.  These equations have the same recursive structure as the Dyson--Schwinger equations we will ultimately be working on but live strictly in the world of decorated Connes--Kreimer.  They are often called \textit{combinatorial Dyson--Schwinger equations} \cite{yeats:phd}.

Let $P$ be a set
(finite or infinite) and assign each $p \in P$ a weight $w_p \in \Natp$, such that there are only
finitely many elements of each weight, and an \textit{insertion exponent} $\mu_p \in \KK$.\footnote{In the
physical application $P$ is the set of primitive Feynman diagrams, $w_p$ is the dimension of the
cycle space of $p$, and $\mu_p$ is related to the number of ways to insert $P$ into itself, but note that we are not even requiring $\mu_p$
to be an integer. In our approach we essentially treat the insertion exponents as
indeterminates.}
Then we have the single equation combinatorial Dyson--Schwinger equation
\begin{equation} \label{eq:cdse single}
    T(x) = 1 + \sum_{p \in P} x^{w_p} B_+^{(p)}(T(x)^{\mu_p}).
\end{equation}
The solution to this equation is essentially due to Bergbauer and Kreimer \cite{bergbauer-kreimer}
although our statement is somewhat more general. To state it we need some more notation. For a
vertex $v$ with decoration $p$, we will write $w(v) = w_p$ and $\mu(v) = \mu_p$. We will write
\[
    w(t) = \sum_{v \in t} w(v).
\]
Finally, by an \textit{automorphism} of a decorated tree we mean an automorphism of the underlying
tree (as a poset) which preserves the decorations, and as one would expect we denote the
automorphism group of $t$ by $\Aut(t)$.

\begin{proposition}[{\cite[Lemma 4]{bergbauer-kreimer}}] \label{thm:cdse single solution}
    The unique solution to \cref{eq:cdse single} is
    \begin{equation} \label{eq:cdse single soln}
        T(x) = 1 + \sum_{t \in \Trees(P)} \left(\prod_{v \in t} \mu(v)^{\underline{\od(v)}} \right)
        \frac{tx^{w(t)}}{|\Aut(t)|}.
    \end{equation}
\end{proposition}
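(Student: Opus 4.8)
The plan is to separate uniqueness from existence. For uniqueness, observe that each $B_+^{(p)}$ raises the weight grading by $w_p \geq 1$, so the degree-$n$ part of the right-hand side of \cref{eq:cdse single} depends only on the parts of $T(x)$ of strictly smaller degree. Since the degree-$0$ part is forced to be $1$, an easy induction on the grading shows that \cref{eq:cdse single} admits at most one solution. It therefore suffices to verify that the series \cref{eq:cdse single soln} is a solution. Throughout I abbreviate the coefficient by $c(t) = \prod_{v \in t}\mu(v)^{\underline{\od(v)}}$ and write $T(x) = 1 + U(x)$ with $U(x) = \sum_{t} c(t)\, t\, x^{w(t)}/|\Aut(t)|$.

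Since $U(x)$ has no constant term, the binomial series is a legitimate identity of formal power series, and using $\binom{\mu_p}{k} = \mu_p^{\underline{k}}/k!$ we obtain
\[
T(x)^{\mu_p} = \sum_{k \geq 0} \frac{\mu_p^{\underline{k}}}{k!}\, U(x)^k.
\]
Expanding $U(x)^k$ over ordered $k$-tuples of trees and applying $x^{w_p}B_+^{(p)}(-)$ turns each tuple $(t_1, \ldots, t_k)$ into the tree $B_+^{(p)}(t_1 \cdots t_k)$ having root decoration $p$ and the $t_i$ as its root subtrees, with weight $w_p + \sum_i w(t_i)$ matching the power of $x$.

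The crux is the bookkeeping showing that these contributions reassemble into $U(x)$. Fix a tree $t$ with root decoration $p$ and root outdegree $k = \od(\rt t)$, and let $m_s$ be the number of root subtrees of $t$ isomorphic to a given tree $s$. Three multiplicative facts combine: the coefficient factors as $c(t) = \mu_p^{\underline{k}}\prod_s c(s)^{m_s}$, since the root supplies the falling factorial $\mu_p^{\underline{k}}$ while the remaining vertices are partitioned among the subtrees (outdegrees being unaffected by deleting the root); the number of ordered tuples producing $t$ is the multinomial coefficient $k!/\prod_s m_s!$; and the automorphism group factors as $|\Aut(t)| = \left(\prod_s m_s!\right)\prod_s |\Aut(s)|^{m_s}$, the first factor coming from permuting isomorphic subtrees. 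Multiplying these, the $k!$ cancels against $1/k!$, the multinomial factor $\prod_s m_s!$ cancels the same factor in $|\Aut(t)|$, and what survives is precisely $c(t)/|\Aut(t)|$ times $t\, x^{w(t)}$. Summing over all trees $t$, each carrying a unique root decoration $p$, recovers $U(x)$, so $T(x) = 1 + \sum_{p} x^{w_p}B_+^{(p)}(T(x)^{\mu_p})$ as required.

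I expect the automorphism-group factorization to be the main obstacle — not because it is deep, but because it is the single place where the combinatorics of isomorphic subtrees must be tracked exactly; it is precisely the identity that makes the multinomial overcounting of ordered tuples cancel against the symmetry factor $1/|\Aut(t)|$, and getting this cancellation right is what ties the falling factorial, the binomial coefficient, and the symmetry weights together.
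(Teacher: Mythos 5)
Your proof is correct, and its overall strategy coincides with the paper's: both arguments verify that the claimed series satisfies \cref{eq:cdse single} by expanding $T(x)^{\mu_p}$ as a binomial series in $\tilde T(x) = T(x) - 1$ (your $U(x)$) and matching terms under the root decomposition $t = B_+^{(p)}f$. The difference lies in how the divided-power bookkeeping is justified. The paper cites the compositional formula \cite[Theorem 5.5.4]{ec2} to conclude in one stroke that
\[
    \frac{\tilde T(x)^k}{k!} = \sum_{f} \left(\prod_{v \in f}\mu(v)^{\underline{\od(v)}}\right)\frac{f\,x^{w(f)}}{|\Aut(f)|},
\]
the sum running over forests with exactly $k$ components, then sums over all forests with coefficient $\mu_p^{\underline{\kappa(f)}}$ (where $\kappa(f)$ is the number of components) and finishes with the bijection between trees and pairs consisting of a root decoration and a root forest. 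You instead prove this identity by hand: expanding $U(x)^k$ over ordered $k$-tuples, counting the tuples that assemble into a fixed tree $t$ by the multinomial coefficient $k!/\prod_s m_s!$, and invoking the factorization $|\Aut(t)| = \left(\prod_s m_s!\right)\prod_s|\Aut(s)|^{m_s}$. That factorization is precisely the content of the compositional formula in this special case, so your argument is a self-contained inlining of the paper's citation. What you gain is elementary transparency — the cancellation among $k!$, the multinomial coefficient, and the symmetry factors is fully explicit; what the paper gains is brevity and a cleaner reusable intermediate statement (the expansion of $T(x)^u$ over all forests for arbitrary $u \in \KK$), which serves as the template for the system version in \cref{thm:cdse system solution}. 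Your explicit uniqueness argument via the grading (using $w_p \ge 1$ and finiteness of each weight class) is correct and fills in a step the paper treats as immediate.
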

Here we use the underline notation for falling factorial powers
\[
  a^{\underline{k}} = \prod_{j=0}^{k-1}(a-j).
\]

\begin{proof}
    Let $T(x)$ be given by \cref{eq:cdse single soln}; we will show that $T(x)$ satisfies
    \cref{eq:cdse single}.

    We introduce some notation. For a forest $f \in \Forests(P)$ write $\kappa(f)$ for the number of
    connected components. Write $\Forests_k(P)$ for the set of forests $f \in \Forests(P)$ with
    $\kappa(f) = k$. Write $\tilde T(x) = T(x) - 1$, so $\tilde T(x)$ is a kind of exponential
    generating function for $P$-trees. By the compositional formula (see e.g. \cite[Theorem
    5.5.4]{ec2}), divided powers count forests:
    \[
        \frac{\tilde T(x)^k}{k!} = \sum_{f \in \Forests_k(P)} \left(\prod_{v \in f}
        \mu(v)^{\underline{\od(v)}} \right) \frac{fx^{w(f)}}{|\Aut(f)|}.
    \]
    Then by the binomial series expansion, for any $u \in \KK$ we have
    \begin{align*}
        T(x)^u
        &= (1 + \tilde T(x))^u \\
        &= \sum_{k \ge 0} \binom{u}{k} \tilde T(x)^k \\
        &= \sum_{f \in \Forests(P)} u^{\underline{\kappa(f)}} \left(\prod_{v \in f}
        \mu(v)^{\underline{\od(v)}} \right) \frac{fx^{w(f)}}{|\Aut(f)|}.
    \end{align*}
    Now, any tree $t \in \Trees(P)$ can be uniquely written as $t = B_+^{(p)}f$ for some $p \in P$
    and $f \in \Forests(P)$. In this case, we have $w(t) = w(f) + w_p$. We have $\Aut(t) \cong
    \Aut(f)$ and the outdegrees of all non-root vertices are the same in $t$ as in $f$. The
    outdegree of the root is $\kappa(f)$. Using this bijection we get
    \begin{align*}
        T(x)
        &= 1 + \sum_{p \in P} \sum_{f \in \Forests(P)} \mu_p^{\underline{\kappa(f)}} \left(\prod_{v
        \in f} \mu(v)^{\underline{\od(v)}} \right) \frac{(B_+^{(p)}f)x^{w(f) + w_p}}{|\Aut(f)|}
        \\
        &= 1 + \sum_{p \in P} x^{w_p} B_+^{(p)}(T(x)^{\mu_p})
    \end{align*}
    as desired.
\end{proof}

\begin{example} \label{example:positive s}
    Suppose we have just a single cocycle (so we are essentially in the undecorated Connes--Kreimer
    Hopf algebra $\CK$) and a nonnegative integer insertion exponent $k$, so \cref{eq:cdse single
    soln}
    becomes
    \[
        T(x) = 1 + xB_+(T(x)^k).
    \]
    If we ignore the $B_+$, this would simply give the ordinary generating function for $k$-ary
    trees (in the computer scientists' sense, where the children of each vertex are totally ordered
    including the ``missing'' ones). With the $B_+$ included, it is therefore still a generating
    function for $k$-ary trees but one in which the contribution of each tree is the underlying tree
    itself as an element of $\CK$. It is not too hard to show that $\left(\prod_{v \in t}
    k^{\underline{\od(v)}}\right)/|\Aut(t)|$ counts the number of ways to make a tree $t$ into a
    $k$-ary tree, so this agrees with \cref{eq:cdse single soln}.

    Of particular note is the case $k = 1$, the \textit{linear} Dyson--Schwinger equation. This
    produces \textit{unary} trees, which in the context of quantum field theory are usually referred
    to as \textit{ladders}. (Thinking of trees as posets these are chains; thinking of them as
    graphs they are paths with the root as one of the endpoints.)
\end{example}

\begin{example} \label{example:s = -1}
    Again consider a single cocycle, but now with insertion exponent $-1$. The equation
    \[
        T(x) = 1 + xB_+(T(x)^{-1})
    \]
    can be rewritten in terms of $\tilde T(x) = T(x) - 1$ as
    \[
        \tilde T(x) = xB_+\left(\frac{1}{1 + \tilde T(x)}\right).
    \]
    If the plus sign were replaced by a minus this would give a generating function for plane trees.
    With the plus, it gives plane trees with a sign corresponding to the number of edges. On the
    other hand, noting that $(-1)^{\underline d} = (-1)^d d!$, we have that the contribution of $t$
    to the right side of \cref{eq:cdse single soln} is $(-1)^{|t|-1} \left(\prod_{v \in t}
    \od(v)!\right)/|\Aut(t)|$ which up to the sign is the number of ways to make $t$ into a plane
    tree, so this also matches the formula.
\end{example}

A similar story holds for systems of combinatorial Dyson--Schwinger equations. The setup here is the same, but we partition our indexing set $P$ into $\{P_i\}_{i \in I}$
for some finite set $I$ which will index the equations in the system. Each $p \in P$ is still
assigned a simple weight $w_p \in \Natp$ but the insertion exponent is now an insertion exponent
vector $\mu_p \in \KK^I$.  The system of combinatorial Dyson--Schwinger equations is
\begin{equation} \label{eq:cdse system}
    T_i(x) = 1 + \sum_{p \in P_i} x^{w_p} B_+^{(p)}(\mathbf T(x)^{\mu_p}).
\end{equation}
\label{symboldef:odv}As with the single-equation case, we first need to solve this combinatorial
system.
This solution first appears in \cite{tubings}, though it is a straightforward generalization of \cref{thm:cdse single solution}.
Again, \cite{tubings} only considers a special case where the insertion exponents are constrained but the same proof works in the more general
setup and in fact allowing the insertion exponents to be arbitrary allows the formula
to be much cleaner. We need some additional notation: let us write $\Trees_i(P)$ for the subset of
$\Trees(P)$ for which the root has a decoration in $P_i$; clearly these are the trees that can
contribute to $T_i(x)$. For a vertex $v$ let $\od_i(v)$ be the number of children which have their
decoration in $P_i$; we collect all of these together to form the \textit{outdegree vector} $\odv(v)
\in \Nat^I$.

\begin{theorem}[{\cite[Theorem 4.7]{tubings}}] \label{thm:cdse system solution}
    The unique solution to the system \cref{eq:cdse system} is
    \[
        T_i(x) = 1 + \sum_{t \in \Trees_i(P)} \left(\prod_{v \in t} \mu(v)^{\underline{\odv(v)}}
        \right) \frac{tx^{w(t)}}{|\Aut(t)|}.
    \]
\end{theorem}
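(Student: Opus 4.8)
The plan is to follow the template of the single-equation proof of \cref{thm:cdse single solution}, replacing the scalar binomial expansion with a multivariate one. As there, it suffices to verify that the claimed series satisfies \cref{eq:cdse system}, since each $w_p \geq 1$ forces the right-hand side to raise the $x$-degree, so the coefficients of $\mathbf T(x)$ are determined recursively and the solution is unique. Throughout I read the multivariate falling factorial as $\mu(v)^{\underline{\odv(v)}} = \prod_{i \in I} \mu(v)_i^{\underline{\odv(v)_i}}$, where $\odv(v)_i = \od_i(v)$. Setting $\tilde T_i(x) = T_i(x) - 1$ and writing $\Forests^{(i)}(P)$ for the forests all of whose components have root decorated in $P_i$, the compositional formula gives, exactly as in the single-equation case,
\[
    \frac{\tilde T_i(x)^{k}}{k!} = \sum_{\substack{f \in \Forests^{(i)}(P) \\ \kappa(f) = k}} \left(\prod_{v \in f} \mu(v)^{\underline{\odv(v)}}\right) \frac{f x^{w(f)}}{|\Aut(f)|},
\]
where $\kappa(f)$ is the number of components of $f$.

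Next I would establish the key product expansion. Every $f \in \Forests(P)$ splits uniquely as a disjoint union $f = \bigsqcup_{i \in I} f^{(i)}$ with $f^{(i)} \in \Forests^{(i)}(P)$, obtained by grouping the components of $f$ according to which class $P_i$ contains their root decoration; since components in different classes cannot be exchanged, $|\Aut(f)| = \prod_{i \in I} |\Aut(f^{(i)})|$, and of course $w(f) = \sum_i w(f^{(i)})$. Writing $\kappa_i(f) = \kappa(f^{(i)})$, expanding each factor of $\mathbf T(x)^{\mu_p} = \prod_{i \in I}(1 + \tilde T_i(x))^{(\mu_p)_i}$ by the binomial series, multiplying, and using that the product in $\CK_P$ realizes disjoint union then yields
\[
    \mathbf T(x)^{\mu_p} = \sum_{f \in \Forests(P)} \left(\prod_{i \in I} (\mu_p)_i^{\underline{\kappa_i(f)}}\right) \left(\prod_{v \in f} \mu(v)^{\underline{\odv(v)}}\right) \frac{f x^{w(f)}}{|\Aut(f)|}.
\]

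The remaining step is the same root-adjunction bijection as before. Any tree $t \in \Trees_i(P)$ with root decorated $p \in P_i$ is uniquely $t = B_+^{(p)} f$; the weights add as $w(t) = w(f) + w_p$, the automorphism groups agree, and the outdegree vectors of the non-root vertices are unchanged. Crucially, the children of the root of $t$ are precisely the roots of the components of $f$, so $\odv(\rt t) = (\kappa_i(f))_{i \in I}$, and hence the scalar prefactor $\prod_{i \in I}(\mu_p)_i^{\underline{\kappa_i(f)}}$ is exactly the missing root factor $\mu(\rt t)^{\underline{\odv(\rt t)}}$. Substituting the expansion of $\mathbf T(x)^{\mu_p}$ into $1 + \sum_{p \in P_i} x^{w_p} B_+^{(p)}(\mathbf T(x)^{\mu_p})$ and reindexing by $t = B_+^{(p)} f$ reproduces the claimed formula for $T_i(x)$.

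I expect the main obstacle to be purely bookkeeping: checking that the multivariate falling factorial factors correctly across the component decomposition, so that the per-class binomial coefficients $(\mu_p)_i^{\underline{\kappa_i(f)}}$ assemble into the single root factor $\mu(\rt t)^{\underline{\odv(\rt t)}}$, and that the automorphism and weight factorizations $|\Aut(f)| = \prod_i |\Aut(f^{(i)})|$ and $w(f) = \sum_i w(f^{(i)})$ interact correctly with the product of the several binomial series. Once the component splitting and the notation for $\odv$ are set up carefully, each step parallels the single-equation argument verbatim.
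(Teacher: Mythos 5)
Your proof is correct and is essentially the paper's own argument: the paper's proof of this theorem is literally the single line ``Analogous to \cref{thm:cdse single solution},'' and your proposal is a faithful elaboration of exactly that analogy (per-class compositional formula for $\tilde T_i(x)^k/k!$, multivariate binomial expansion via splitting forests by root-decoration class with $|\Aut(f)| = \prod_i |\Aut(f^{(i)})|$, and the root-adjunction bijection identifying $\prod_{i}(\mu_p)_i^{\underline{\kappa_i(f)}}$ with the root factor $\mu(\rt t)^{\underline{\odv(\rt t)}}$). The bookkeeping points you flag as potential obstacles all check out, so nothing is missing.
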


\begin{proof}
    Analogous to \cref{thm:cdse single solution}.
\end{proof}

\subsection{Hopf algebras of polynomials and power series}\label{subsec hopf poly}

The next step to bring us closer to the Dyson--Schwinger equations of physics is to understand what will be the target space for our Feynman rules, where our Green functions will live, and relatedly, to what $A$ we will apply the universal property \cref{thm:ck universal decorated}.

We make the polynomial algebra $\KK[L]$ into a (graded) bialgebra by taking
the coproduct and counit as the unique algebra morphisms extending $\Delta L = 1 \otimes L + L \otimes 1$ and $\epsilon(L) = 0$.  This too is a Hopf algebra,
with antipode $S(f (L)) = f (-L)$.
It is often profitable to think of this coproduct in a different way. We can identify
$\KK[L] \otimes \KK[L]$ with $\KK[L_1, L_2]$ (where $L^j \otimes L^k$ corresponds to $L_1^jL_2^k$). The coproduct then simply corresponds to the map $f (L) \mapsto f (L_1 + L_2)$.

We will need some language and standard results on infinitesimal characters. An infinitesimal
character of a bialgebra $H$ is a map $\sigma : H \to \KK$ satisfying
\[
  \sigma(ab) = \sigma(a)\epsilon(b) + \epsilon(a)\sigma(b),
\]
or in other words a derivation of $H$ into the trivial $H$-module $\KK$.  We summarize some useful results for us on infinitesimal characters in the following theorem.
\begin{theorem}\label{thm inf char}
Let $H$ be a graded connected Hopf algebra and $\phi : H \rightarrow \KK[L]$ be an algebra
morphism. Then $\lin \phi$ is an infinitesimal character if and only if $\phi|_{L=0} = \epsilon$, where $\lin$ is the map extracting the linear coefficient. Moreover, the
following are equivalent:
\begin{enumerate}[label=(\roman*),ref=\thetheorem(\roman*)]
\item $\phi$ is a bialgebra morphism,
\item $\phi = \exp_*(L \lin \phi)$,
\item $\phi|_{L=0} = \epsilon$ and $\frac{d}{dL} \phi = (\lin \phi) * \phi$,
\item $\phi|_{L=0} = \epsilon$ and $\frac{d}{dL} \phi = \phi * (\lin \phi)$.
\end{enumerate}
\end{theorem}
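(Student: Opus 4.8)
The plan is to handle the polynomial values of $\phi$ coefficientwise. Write $\phi(h)=\sum_{j\ge 0}\phi_j(h)\,L^j$, so that each $\phi_j\colon H\to\KK$ is linear, $\phi_0=\phi|_{L=0}$, and $\lin\phi=\phi_1$. Extracting the $L^0$-coefficient from $\phi(ab)=\phi(a)\phi(b)$ shows $\phi_0$ is a character, and more generally each evaluation $\phi_\ell\colon h\mapsto\phi(h)|_{L=\ell}$ lies in $\Ch(H)$. Under the identification $\KK[L]\otimes\KK[L]=\KK[L_1,L_2]$ in which $\Delta_{\KK[L]}$ is $f(L)\mapsto f(L_1+L_2)$, compatibility of $\phi$ with the coproduct reads $\phi_{\ell+m}=\phi_\ell*\phi_m$ for all $\ell,m$, and compatibility with the counit reads $\phi_0=\epsilon$; thus (i) says exactly that $\ell\mapsto\phi_\ell$ is a one-parameter subgroup of $\Ch(H)$ through $\epsilon$, with infinitesimal generator $\frac{d}{d\ell}\phi_\ell|_{\ell=0}=\lin\phi\in\ch(H)$. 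Since $H$ is graded connected, $\lin\phi$ vanishes on $H_0$ and the convolution exponential $\exp*$ converges on such maps, so the theorem is the standard correspondence between one-parameter subgroups, their generators, and $\exp*$, and the four conditions are its four standard reformulations.

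For the first assertion I extract the coefficient of $L$ from $\phi(ab)=\phi(a)\phi(b)$, obtaining $\phi_1(ab)=\phi_0(a)\phi_1(b)+\phi_1(a)\phi_0(b)$. If $\phi_0=\epsilon$ this is precisely the infinitesimal-character identity, giving the ``if'' direction. Conversely, subtracting the infinitesimal-character identity from this gives $g(a)\phi_1(b)+\phi_1(a)g(b)=0$ for all $a,b$, where $g=\phi_0-\epsilon$; as soon as $\lin\phi=\phi_1$ is not identically zero one fixes $a$ with $\phi_1(a)\ne 0$, deduces that $g$ is a scalar multiple of $\phi_1$, and feeds this back to force $g=0$, i.e.\ $\phi_0=\epsilon$. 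The degenerate case $\lin\phi=0$ is the one point here that must be treated separately (or ruled out by the standing assumptions on $\phi$), since then the identity carries no information.

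For the equivalences I would run a cycle whose easy half consists of the implications out of (i) and out of (ii). Writing $\psi=\lin\phi$ and using that $L$ is central and $\psi$ is scalar-valued, $(L\psi)^{*n}=L^n\psi^{*n}$, so $\exp*(L\psi)=\sum_{n\ge 0}\frac{L^n}{n!}\psi^{*n}$; differentiating termwise in $L$ gives $\frac{d}{dL}\phi=\psi*\phi=\phi*\psi$ and setting $L=0$ gives $\phi_0=\psi^{*0}=\epsilon$, which is (ii)$\Rightarrow$(iii) and (ii)$\Rightarrow$(iv). For (i), differentiating the coproduct identity $\phi(h)(L_1+L_2)=\sum_{(h)}\phi(h_{(1)})(L_1)\phi(h_{(2)})(L_2)$ in $L_2$ and setting $L_2=0$, $L_1=L$ yields $\frac{d}{dL}\phi=\phi*\lin\phi$, while differentiating in $L_1$ at $0$ yields $\frac{d}{dL}\phi=\lin\phi*\phi$; together with $\phi_0=\epsilon$ from counit-compatibility these are (iv) and (iii).

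The content lies in recovering (ii) from the differential conditions, i.e.\ (iii)$\Rightarrow$(ii) (and symmetrically (iv)$\Rightarrow$(ii)), which closes the cycle together with (ii)$\Rightarrow$(i). I read $\frac{d}{dL}\phi=(\lin\phi)*\phi$ with $\phi|_{L=0}=\epsilon$ as an initial value problem for the polynomial-valued $\phi$ and solve it uniquely by induction on the grading: for $h\in H_n$, connectedness gives $\Delta h=h\otimes 1+1\otimes h+\Delta'h$ with $\Delta'h\in H_+\otimes H_+$ of strictly smaller degree in each factor, so $(\lin\phi*\phi)(h)=\lin\phi(h)+\sum'\lin\phi(h_{(1)})\phi(h_{(2)})$ expresses $\frac{d}{dL}\phi(h)$ through the constant $\lin\phi(h)$ and values of $\phi$ in degrees below $n$; integrating with $\phi(h)|_{L=0}=0$ determines $\phi(h)$, and $\exp*(L\lin\phi)$ satisfies the same problem, so they agree. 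Finally, for (ii)$\Rightarrow$(i), from $\phi(h)(L)=\sum_n\frac{L^n}{n!}\psi^{*n}(h)$, the associativity identity $\psi^{*n}(h)=\sum_{(h)}\psi^{*k}(h_{(1)})\psi^{*(n-k)}(h_{(2)})$, and the binomial theorem applied to $(L_1+L_2)^n$, the expansion of $\phi(h)(L_1+L_2)$ factors as $\sum_{(h)}\phi(h_{(1)})(L_1)\phi(h_{(2)})(L_2)$, giving coproduct-compatibility, while counit-compatibility is just $\phi_0=\epsilon$. The main obstacle is precisely this ODE step: one must keep in mind that $\lin\phi$ is extracted from the very $\phi$ being solved for, so the problem is self-referential, and that both the inductive solution and the convergence of $\exp*$ rest on the graded connected hypothesis; everything else is bookkeeping with the convolution product.
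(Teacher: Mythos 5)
The paper itself gives no proof of this theorem: it states that the proof ``consists of standard calculations'' and defers to Section 2.2.4 of \cite{OH:phd}. Your argument is exactly that standard calculation, and the equivalence part of it is correct and complete: the cycle (i) $\Rightarrow$ (iii) $\Rightarrow$ (ii) $\Rightarrow$ (i), together with (ii) $\Rightarrow$ (iv) $\Rightarrow$ (ii), is closed; the key step (iii) $\Rightarrow$ (ii) is handled correctly by fixing $\psi = \lin\phi$ once and for all and invoking uniqueness for the initial value problem $F|_{L=0}=\epsilon$, $\frac{d}{dL}F=\psi * F$, which both $\phi$ and $\exp_*(L\psi)$ satisfy. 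Two small remarks: the uniqueness can be obtained even more cheaply by induction on the exponent of $L$ (the coefficient recursion $(j+1)F_{j+1}=\psi * F_j$, $F_0=\epsilon$, forces $F_j=\psi^{*j}/j!$), so the graded connected hypothesis is needed only to ensure that $\exp_*(L\,\lin\phi)$ takes polynomial values, via $\lin\phi(1)=0$ and local nilpotence of convolution; and your appeal to ``evaluation at all $\ell,m$'' is legitimate because $\KK$ has characteristic $0$ and is therefore infinite.

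On the first assertion, your instinct is right but can be sharpened: the degenerate case $\lin\phi=0$ is not merely a case you failed to treat --- the ``only if'' direction is actually \emph{false} there, so no proof can close it. Take $H=\KK[x]$ with $x$ primitive of degree $1$ and let $\phi$ be the algebra morphism with $\phi(x^n)=1$ for all $n$; then $\lin\phi=0$ is (vacuously) an infinitesimal character, yet $\phi|_{L=0}\neq\epsilon$. This is a harmless defect of the theorem as stated rather than of your proof: the paper only ever uses the ``if'' direction and the equivalences (i)--(iv), all of which your argument establishes, and the statement is repaired by restricting the ``only if'' claim to $\lin\phi\neq 0$, which is precisely the case your scalar-multiple argument handles correctly.
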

The proof of this theorem consists of standard calculations and can also be found in \cite[Section
2.2.4]{OH:phd}.

Another important Hopf algebra is the Faà di Bruno Hopf algebra.

\label{symboldef:fdb}Let $\widetilde \DiffGp \subset \KK[[x]]$ be the set of all series with zero
constant term and nonzero linear term. These are also known as \textit{formal diffeomorphisms} and
form a group with respect to composition of series. Let $\DiffGp$ be the subgroup consisting of
series with linear term $x$; these are sometimes known as \textit{$\delta$-series}. It turns out
that $\DiffGp$ is essentially isomorphic to the character group of a graded Hopf algebra, the
\textit{Faà di Bruno Hopf algebra} $\FdB$.

As an algebra, $\FdB$ should be thought of as the algebra of polynomial functions on
$\DiffGp$. Explicitly, it is the polynomial algebra $\KK[\pi_1, \pi_2, \dots]$ in an
$\Natp$-indexed set of variables. We organize these variables into a power series
\[
    \Pi(x) = x + \sum_{n \ge 1} \pi_nx^{n+1}.
\]
Then the map from the character group $\Ch(\FdB) \to \DiffGp$ given by $\zeta \mapsto \zeta(\Pi(x))$ is clearly a
bijection. (Note that here and throughout notation like $\zeta(\Pi(x))$ implicitly means
applying $\zeta$ coefficientwise.) We define a coproduct
\begin{equation} \label{eqn:fdb coproduct}
    \Delta \pi_n = \sum_{k=0}^n [x^{n+1}] \Pi(x)^{k+1} \otimes \pi_k.
\end{equation}
(where $\pi_0 = 1$). Observe that this makes $\FdB$ into a connected graded bialgebra if we define
$\pi_n$ to have degree $n$; this is the reason for the off-by-one in the definition. The following
proposition is essentially immediate from \cref{eqn:fdb coproduct}.

\begin{proposition} \label{thm:fdb convolution}
    Let $A$ be a commutative algebra and $\phi, \psi\colon \FdB \to A$ be algebra morphisms. Let
    $\Phi(x) = \phi(\Pi(x))$ and $\Psi(x) = \psi(\Pi(x))$. Then $(\phi * \psi)(\Pi(x)) = \Psi(\Phi(x))$.
\end{proposition}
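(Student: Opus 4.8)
The plan is to compute $(\phi*\psi)(\Pi(x))$ coefficientwise, unfolding the convolution product through its definition $(\phi*\psi)(a) = m_A(\phi \otimes \psi)\Delta(a)$ and then reorganizing the resulting double sum into the composite series $\Psi(\Phi(x))$. Writing $\Pi(x) = \sum_{n \ge 0} \pi_n x^{n+1}$ with $\pi_0 = 1$, I would first record that $\Phi(x) = \sum_{n \ge 0}\phi(\pi_n)x^{n+1}$ and $\Psi(x) = \sum_{n\ge 0}\psi(\pi_n)x^{n+1}$, each a series with zero constant term and linear coefficient $1$, since $\phi$ and $\psi$ are unital algebra morphisms.

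Applying the convolution to the $n$th coefficient and using \cref{eqn:fdb coproduct},
\[
    (\phi*\psi)(\pi_n) = \sum_{k=0}^n \phi\bigl([x^{n+1}]\Pi(x)^{k+1}\bigr)\,\psi(\pi_k).
\]
The crucial point is that $\phi$, being an algebra morphism, commutes with the formation of powers of $\Pi(x)$ and with extraction of coefficients: since $[x^{n+1}]\Pi(x)^{k+1}$ is a fixed polynomial in the $\pi_i$, applying $\phi$ to it gives $[x^{n+1}]\Phi(x)^{k+1}$. Substituting this in and summing over $n$ yields
\[
    (\phi*\psi)(\Pi(x)) = \sum_{n \ge 0}\sum_{k=0}^n \bigl([x^{n+1}]\Phi(x)^{k+1}\bigr)\,\psi(\pi_k)\,x^{n+1}.
\]

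To finish I would interchange the order of summation and collect, for each fixed $k$, the terms $\sum_{n \ge k}\bigl([x^{n+1}]\Phi(x)^{k+1}\bigr)x^{n+1}$. Because $\Phi(x)$ has zero constant term, $\Phi(x)^{k+1}$ is divisible by $x^{k+1}$, so the omitted low-order terms (those with $n < k$) vanish and this inner sum is exactly $\Phi(x)^{k+1}$. Hence $(\phi*\psi)(\Pi(x)) = \sum_{k \ge 0}\psi(\pi_k)\,\Phi(x)^{k+1} = \Psi(\Phi(x))$, as claimed. There is no real obstacle here beyond careful bookkeeping; the one step that must be stated precisely is the commutation of $\phi$ with coefficient extraction, which is what converts the abstract coproduct \cref{eqn:fdb coproduct} into an identity of honest power series.
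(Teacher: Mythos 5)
Your proof is correct and is precisely the computation the paper has in mind when it declares this proposition ``essentially immediate'' from \cref{eqn:fdb coproduct}: apply the convolution to the coproduct formula, use that the algebra morphism $\phi$ commutes with coefficient extraction to turn $[x^{n+1}]\Pi(x)^{k+1}$ into $[x^{n+1}]\Phi(x)^{k+1}$, and resum. The paper omits these bookkeeping details entirely, so your write-up simply supplies them along the only natural route.
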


In particular, this actually implies that the map $\Ch(\FdB) \to \DiffGp$ described above is
an \textit{anti}-isomorphism of groups. Clearly we could have defined the coproduct with the tensor
factors flipped in order to make it an isomorphism, but the way we have defined it is both
traditional and will turn out to be convenient for our purposes.

Comodules over a coalgebra become modules over the dual. Let $C$ be a coalgebra and $M$ be a left
$C$-comodule with coaction $\delta$. For $\alpha$ an element in the dual ($\alpha \in C^*$), define
$m \rightact \alpha = (\alpha \otimes \id_M )\delta(m)$.  We can compute
\begin{align*}
(m \rightact \alpha) \rightact \beta & = (\beta \otimes \id_M )\delta((\alpha \otimes id_M )\delta(m)) \\
& = (\beta \otimes \id_M )(\alpha \otimes \delta)\delta(m) \\
& = (\alpha \otimes \beta \otimes \id_M )(\id_C \otimes \delta)\delta(m) \\
& = (\alpha \otimes \beta \otimes \id_M )(\Delta_C \otimes \id_M )\delta(m) \\
& = ((\alpha * \beta) \otimes \id_M )\delta(m) \\
  & = m \rightact (\alpha * \beta)
\end{align*}
so $\rightact$ makes $M$ into a right module over $C^*$. Analogously, if $M$ is a right $C$-comodule
then we define $\alpha \leftact m = (\id_M \otimes \alpha)\delta(m)$ and this makes $M$ into a left
$C^*$-module. In particular, $C$ is both a left and right comodule over itself and hence both a left
and right module over $C^*$.

Applying this to $\FdB^*$ and observing that it is sufficient to understand how an element of
$\FdB^*$ acts on the generators, or equivalently on the series $\Pi(x)$ itself, we obtain the following result directly from \cref{eqn:fdb coproduct}.

\begin{proposition} \label{thm:fdb actions}
    Suppose $\phi \in \FdB^*$ and let $\Phi(x) = \phi(\Pi(x))$. Then:
    \begin{enumerate}[label=(\roman*),ref=\thetheorem(\roman*)]
        \item \label[proposition]{thm:fdb action left}
            $\phi \leftact \Pi(x) = \Phi(\Pi(x))$.
        \item
            If $\phi \in \Ch(\FdB)$ then $\Pi(x) \rightact \phi = \Pi(\Phi(x))$.
        \item \label[proposition]{thm:fdb inf char right}
            If $\phi \in \ch(\FdB)$ then $\Pi(x) \rightact \phi = \Phi(x)\Pi'(x)$, where $\ch(\FdB)$ is the Lie algebra of infinitesimal characters.
    \end{enumerate}
\end{proposition}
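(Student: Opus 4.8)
The plan is to derive all three identities from a single coefficientwise computation of the two actions on the series $\Pi(x)$, read off directly from the coproduct formula \cref{eqn:fdb coproduct}. Since, for a fixed $\phi$, the maps $c \mapsto c \rightact \phi$ and $c \mapsto \phi \leftact c$ are linear, I may apply them coefficientwise to $\Pi(x)$. Recalling that $\FdB$ is a comodule over itself via $\Delta$, so that $\pi_n \rightact \phi = (\phi \otimes \id)\Delta(\pi_n)$ and $\phi \leftact \pi_n = (\id \otimes \phi)\Delta(\pi_n)$, I substitute \cref{eqn:fdb coproduct}, apply $\phi$ to the appropriate tensor factor, and swap the order of the resulting double sum over $n$ and $k$. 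Using that $[x^{n+1}]\Pi(x)^{k+1}$ vanishes when $n < k$, the inner sum reassembles an honest power of $\Pi(x)$, yielding the two master identities
\[
\Pi(x) \rightact \phi = \sum_{k \ge 0} \pi_k\, \phi\left(\Pi(x)^{k+1}\right), \qquad \phi \leftact \Pi(x) = \sum_{k \ge 0} \phi(\pi_k)\, \Pi(x)^{k+1},
\]
where in each case $\phi$ is applied coefficientwise to the series appearing.

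Each part of the proposition is then obtained by simplifying $\phi(\Pi(x)^{k+1})$ according to the hypothesis on $\phi$ and recognizing the resulting generating function. Part (i) needs no hypothesis: since $\Phi(y) = \sum_{k \ge 0}\phi(\pi_k)y^{k+1}$, the right-hand master identity is by definition $\Phi(\Pi(x))$. For (ii), where $\phi$ is an algebra morphism, applying $\phi$ coefficientwise commutes with taking powers of series, so $\phi(\Pi(x)^{k+1}) = \Phi(x)^{k+1}$; the left-hand identity then becomes $\sum_{k \ge 0}\pi_k\Phi(x)^{k+1} = \Pi(\Phi(x))$, using $\Pi(y) = \sum_{k \ge 0}\pi_k y^{k+1}$ with $\pi_0 = 1$.

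For (iii) the new ingredient is that an infinitesimal character is an $\epsilon$-derivation, so applied coefficientwise it obeys $\phi(AB) = \phi(A)\epsilon(B) + \epsilon(A)\phi(B)$ for formal series $A$ and $B$, with $\epsilon$ the (coefficientwise) counit. Two observations finish the computation: first, $\epsilon(\Pi(x)) = x$, because the counit annihilates every $\pi_n$ with $n \ge 1$ while $\pi_0 = 1$; second, the Leibniz rule applied to the $(k+1)$-fold product gives $\phi(\Pi(x)^{k+1}) = (k+1)\epsilon(\Pi(x))^{k}\phi(\Pi(x)) = (k+1)x^k\Phi(x)$. Substituting into the master identity yields $\Pi(x) \rightact \phi = \Phi(x)\sum_{k \ge 0}(k+1)\pi_k x^k = \Phi(x)\Pi'(x)$, since $\Pi'(x) = \sum_{k \ge 0}(k+1)\pi_k x^k$.

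I expect no genuine conceptual obstacle here; the care goes entirely into the formal-series bookkeeping. The step most prone to slips is the reindexing of the double sum into the master identities — in particular checking that the inner sum really does reconstitute $\phi(\Pi(x)^{k+1})$, including its lowest-degree terms — together with the justification that an algebra morphism (resp.\ an infinitesimal character) may be pulled through a coefficientwise power (resp.\ product) of series. All composite series that appear are well-defined because $\Pi(x)$ and $\Phi(x)$ have zero constant term.
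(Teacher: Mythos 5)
Your proposal is correct and takes essentially the same route as the paper: the paper's (implicit) proof is precisely to note that it suffices to compute the two actions on the series $\Pi(x)$ coefficientwise and read the result off from the coproduct formula \cref{eqn:fdb coproduct}, which is exactly the computation you carry out, with the character/infinitesimal-character hypotheses entering only to simplify $\phi\left(\Pi(x)^{k+1}\right)$ in parts (ii) and (iii). Your bookkeeping (the vanishing of $[x^{n+1}]\Pi(x)^{k+1}$ for $n<k$, the Leibniz rule for powers, and $\counit(\Pi(x))=x$) is accurate, so there is no gap.
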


As a particular consequence of \cref{thm:fdb inf char right}, we get a nice description of the Lie
algebra $\ch(\FdB)$: the map $\phi \mapsto \phi(\Pi(x))\frac{d}{dx}$ gives a faithful representation
by differential operators on $\KK[[x]]$. We can also combine this with \cref{thm inf char} to characterize bialgebra morphisms $\FdB \to \KK[L]$.

\begin{theorem} \label{thm:fdb pde}
    Let $\phi \colon \FdB \to \KK[L]$ be an algebra morphism and let $\Phi(x, L) = \phi(\Pi(x))$.
    Let $\beta(x)$ be the linear term in $L$ of $\Phi(x, L)$. Then $\phi$ is a bialgebra morphism
    if and only if $\Phi(x, 0) = x$ and
    \begin{equation} \label{eqn:fdb pde}
        \frac{\partial \Phi(x, L)}{\partial L} = \beta(x)\frac{\partial \Phi(x, L)}{\partial x}.
    \end{equation}
\end{theorem}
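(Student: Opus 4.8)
The plan is to deduce the statement from the characterization of bialgebra morphisms into $\KK[L]$ in \cref{thm inf char}; specifically, I would show that the two halves of the hypothesis — $\Phi(x,0)=x$ and the partial differential equation \cref{eqn:fdb pde} — are exactly the two clauses of condition (iii) there, namely $\phi|_{L=0}=\epsilon$ and $\frac{d}{dL}\phi = (\lin\phi)*\phi$, transported along the coefficientwise evaluation of $\phi$ on $\Pi(x)$. Since $\FdB = \KK[\pi_1,\pi_2,\dots]$ is freely generated by the coefficients of $\Pi(x)$, any condition on $\phi$ that is determined by its values on algebra generators can be checked on $\Pi(x)$ alone, and the whole argument is organized around making this reduction precise for each clause.

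First I would treat the clause $\phi|_{L=0}=\epsilon$. Both $\phi|_{L=0}$ (evaluation at $L=0$ composed with $\phi$) and $\epsilon$ are algebra morphisms $\FdB \to \KK$, so they coincide if and only if they agree on the generators $\pi_n$, i.e.\ coefficientwise on $\Pi(x)$. Applying them to $\Pi(x)$ yields $\Phi(x,0)$ and $\epsilon(\Pi(x))=x$ respectively (each $\pi_n$ has positive degree, so $\epsilon(\pi_n)=0$), whence $\phi|_{L=0}=\epsilon \iff \Phi(x,0)=x$. This also pins down $\beta(x)$: it is the coefficient of $L$ in $\Phi(x,L)$, which coefficientwise equals $(\lin\phi)(\Pi(x))$, so $\beta(x) = (\lin\phi)(\Pi(x))$.

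Next I would translate the convolution identity $\frac{d}{dL}\phi = (\lin\phi)*\phi$. Evaluating its left side coefficientwise on $\Pi(x)$ gives $\partial_L\Phi(x,L)$. For the right side, a short Sweedler computation shows $((\lin\phi)*\phi)(c) = \phi\bigl(c \rightact (\lin\phi)\bigr)$, where $\rightact$ is the right $\FdB^*$-action on $\FdB$ regarded as a comodule over itself; thus coefficientwise on $\Pi(x)$ the right side is $\phi\bigl(\Pi(x)\rightact(\lin\phi)\bigr)$. Assuming $\Phi(x,0)=x$, the map $\lin\phi$ is an infinitesimal character by \cref{thm inf char}, so \cref{thm:fdb inf char right} applies and gives $\Pi(x)\rightact(\lin\phi) = \beta(x)\Pi'(x)$. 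Applying $\phi$, using that $\beta(x)$ has scalar coefficients and that $\phi(\Pi'(x)) = \partial_x\Phi(x,L)$, this becomes $\beta(x)\,\partial_x\Phi(x,L)$. Hence, coefficientwise on $\Pi(x)$, the identity $\frac{d}{dL}\phi = (\lin\phi)*\phi$ is precisely \cref{eqn:fdb pde}.

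The step I expect to require the most care — the real content beyond bookkeeping — is promoting the coefficientwise equality on $\Pi(x)$ back to an equality of maps on all of $\FdB$, which is what \cref{thm inf char} actually demands. The key observation is that both $\frac{d}{dL}\phi$ and $(\lin\phi)*\phi$ are $\phi$-\emph{derivations}: the former because $\frac{d}{dL}$ is a derivation on $\KK[L]$ and $\phi$ is multiplicative, and the latter because $\lin\phi$ is an infinitesimal character (again using $\Phi(x,0)=x$), so the defining property of an infinitesimal character together with the counit axiom yields $((\lin\phi)*\phi)(ab) = ((\lin\phi)*\phi)(a)\phi(b) + \phi(a)((\lin\phi)*\phi)(b)$. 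A $\phi$-derivation is determined by its values on algebra generators, so two of them agree everywhere as soon as they agree on the $\pi_n$, i.e.\ coefficientwise on $\Pi(x)$. Combining the three reductions, $\phi$ is a bialgebra morphism iff condition (iii) of \cref{thm inf char} holds iff $\Phi(x,0)=x$ and \cref{eqn:fdb pde} hold, which is the claim. (In the forward direction the derivation argument is unnecessary, since condition (iii) then holds on all of $\FdB$ and a fortiori on $\Pi(x)$; it is only the converse that needs it.)
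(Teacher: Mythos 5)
Your proof is correct and follows essentially the same route as the paper's: reduce to condition (iii) of \cref{thm inf char}, check both clauses coefficientwise on $\Pi(x)$ since $\FdB$ is generated by the $\pi_n$, and apply \cref{thm:fdb inf char right} to the infinitesimal character $\lin\phi$ to identify the right-hand side as $\beta(x)\,\partial_x\Phi(x,L)$. The only difference is that you make explicit the justification for reducing the convolution identity to generators (namely that both $\frac{d}{dL}\phi$ and $(\lin\phi)*\phi$ are $\phi$-derivations), a point the paper dispatches with the single remark that an algebra morphism's behaviour is determined by its values on generators.
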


\begin{proof}
  Recall the notation and results of \cref{thm:fdb inf char right}.
    We know that $\phi$ is a bialgebra morphism if and only if $\phi|_{L=0} = \counit$ and $\frac{d}{dL}\phi = (\lin \phi) * \phi$. Since $\phi$ is an algebra morphism, its
    behaviour is determined by what it does to the generators, so these are respectively equivalent
    to $\Phi(x, 0) = \counit(\Pi(x)) = x$ and
    \[
        \frac{\partial \Phi(x, L)}{\partial L} = ((\lin \phi) * \phi)(\Pi(x)) = \phi(\Pi(x)
        \rightact \lin(\phi)).
    \]
    Since $\lin \phi$ is an infinitesimal character, by \cref{thm:fdb inf char right} the right-hand
    side is $\beta(x) \frac{\partial \Phi(x,L)}{\partial x}$ as wanted.
\end{proof}

\begin{example}
  For example, take $\phi\colon \FdB \to \KK[L]$ defined on generators by $\phi(\pi_n) = L^n$ and extended as an algebra morphism.  Then $\Phi(x,L) = \sum_{n=0}^{\infty} L^nx^{n+1} = \frac{x}{1-Lx}$ and $\beta(x) = x^2$.  We can now check the conditions from \cref{thm:fdb pde}: we have $\Phi(x,0) = x$ and $\frac{\partial \Phi(x,L)}{\partial L} = x^2\sum_{n=1}^\infty nL^{n-1}x^{n-1} = \beta(x)\frac{\partial \Phi(x,L)}{\partial x}$, hence $\phi$ is a bialgebra morphism. 
\end{example}

We mentioned in \cref{subsec CK} that the $B_+$ and $B_+^{(i)}$ operators on rooted trees and decorated rooted trees are 1-cocycles.  We will also need to understand 1-cocycles on other bialgebras, so let us now discuss in more detail.

Let $H$ be a bialgebra and $M$ a left
comodule over $H$, with coaction $\delta$. For $k \ge 0$, a \textit{$k$-cochain on $M$} is a linear
map $M \to H^{\otimes k}$. Denote the vector space of $k$-cochains by $\Chains^k(H, M)$. The
\textit{coboundary map} $d_k\colon \Chains^k(H, M) \to \Chains^{k+1}(H, M)$ is defined by
\[
    d_k \Lambda = (\id_H \otimes \Lambda)\delta + \sum_{j=1}^k (-1)^j (\id_H^{\otimes(j-1)}
    \otimes \Delta \otimes \id_H^{\otimes(k-j)}) \Lambda + (-1)^k \Lambda \otimes 1.
\]
\label{symboldef:cocycles}The kernel and image of this map are the spaces of $k$-cocycles and
$(k+1)$-coboundaries respectively. The space of $k$-cocycles is denoted $\Cycles^k(H, M)$. A tedious
but routine calculation shows that $d_{k+1}d_k = 0$, so every coboundary is a cocycle. The quotient
$\Homol^k(H, M) = \Cycles^k(H, M)/d_{k-1}\Chains^{k-1}(H, M)$ is the $k$th cohomology of the
comodule $M$. Most often considered is the case $M = H$, in which case we simply write
$\Cycles^k(H)$ and $\Homol^k(H)$.

Note that $\Homol^k(H, M)$ is $\mathrm{Ext}_H^k(M, H)$ in the category of comodules over $H$.  The original notion of cohomology for coalgebras introduced by Doi \cite{doi} worked with a bicomodule with a left coaction and a right coaction.  Our definition is the special case where the right coaction is trivial.

We will only be interested in the case $k = 1$. Moreover, for the remainder of this section we will
focus on the case $M = H$. (We will consider some other comodules in \cref{subsec tensor 1-cocycles}.) In this case the cocycle condition $d_1\Lambda = 0$ can be written
\[
    \Delta \Lambda = \Lambda \otimes 1 + (\id_H \otimes \Lambda)\Delta
\]
which is the form we saw for $B_+$. Here is another very natural example.

\begin{example} \label{example:integral cocycle}
    Let $\Int$ be the integration operator on $\KK[L]$:
    \[
        \Int f(L) = \int_0^L f(u)\,du.
    \]
    Recall that the coproduct on $\KK[L]$ can be interpreted as
    substituting a sum $L_1 + L_2$ in place of the variable $L$. That $\Int$ is a 1-cocycle then
    boils down to some familiar properties of integrals:
    \begin{align*}
        \int_0^{L_1 + L_2} f(u)\,du
        &= \int_0^{L_1} f(u)\,du + \int_{L_1}^{L_1 + L_2} f(u)\,du \\
        &= \int_0^{L_1} f(u)\,du + \int_0^{L_2} f(L_1 + u)\,du.
    \end{align*}

    By the universal property, $\Int$ defines a morphism $\phi\colon \CK \to \KK[L]$. An easy
    induction with the recurrence \cref{eqn:cocycle morphism recursive} gives
    \[
        \phi(t) = \frac{L^{|t|}}{\prod_{v \in t} |t_v|}
    \]
    where $t_v$ denotes the subtree (principal downset) rooted at $v$. The denominator is also known
    as the \textit{tree factorial}. A formula of Knuth \cite[Section 5.1, Exercise 20]{aocp3} gives
    the number of linear extensions of a tree as
    \[
        \linex(t) = \frac{|t|!}{\prod_{v \in t} |t_v|}
    \]
    and hence we can alternatively write
    \[
        \phi(t) = \frac{\linex(t)L^{|t|}}{|t|!}.
    \]
    This latter formula is the simplest special case of the formula we will derive for arbitrary
    1-cocycles on $\KK[L]$ in the context of the tubing expansion.
\end{example}

We state some basic properties of 1-cocycles. For stating these it is convenient to generalize
the notion of convolution to maps defined on comodules: for an algebra $A$ and maps $\alpha\colon H
\to A$ and $\beta\colon M \to A$, write
\[
    \alpha *_\delta \beta = m_A(\alpha \otimes \beta)\delta
\]
where $m_A$ is the multiplication map on $A$.

\begin{lemma} \label{thm:basic cocycle properties}
    Let $M$ be a comodule over $H$ and $\Lambda \in \Cycles^1(H, M)$. Then:
    \begin{enumerate}[label=(\roman*), ref=\thetheorem(\roman*)]
        \item
            \label[lemma]{thm:cocycle convolution} If $\alpha, \beta\colon H \to A$ for some algebra
            $A$ then $(\alpha * \beta)\Lambda = \beta(1) \alpha \Lambda + \alpha *_\delta
            \beta\Lambda$.
        \item
            \label[lemma]{thm:cocycle counit} $\counit \Lambda = 0$.
        \item
            \label[lemma]{thm:cocycle contravariant} If $\phi\colon N \to M$ is a homomorphism of
            comodules then $\Lambda\phi \in \Cycles^1(H, N)$.
    \end{enumerate}
\end{lemma}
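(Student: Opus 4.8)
The plan is to derive all three parts directly from the defining cocycle identity, which for $\Lambda\in\Cycles^1(H,M)$ reads $\Delta\Lambda=\Lambda\otimes 1+(\id_H\otimes\Lambda)\delta$, combined with the counit axioms on $H$ and the left-comodule counit axiom $(\counit_H\otimes\id_M)\delta=\id_M$. None of the three parts requires more than substituting this identity and keeping track of the tensor legs.

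First I would dispatch part~(ii). Applying $\counit_H\otimes\counit_H$ to both sides of the cocycle identity, the left side collapses via $(\counit_H\otimes\counit_H)\Delta=\counit_H$ to $\counit_H\Lambda$. On the right, the term $\Lambda\otimes 1$ contributes $\counit_H\Lambda$ (since $\counit_H(1)=1$), while $(\id_H\otimes\Lambda)\delta$ becomes $(\counit_H\otimes\counit_H\Lambda)\delta$, which reduces to $\counit_H\Lambda$ once the comodule counit axiom $(\counit_H\otimes\id_M)\delta=\id_M$ is used to absorb the $H$-leg. This yields $\counit_H\Lambda=\counit_H\Lambda+\counit_H\Lambda$, hence $\counit_H\Lambda=0$.

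Part~(i) is the same substitution applied to $(\alpha*\beta)\Lambda=m_A(\alpha\otimes\beta)\Delta\Lambda$. The summand $\Lambda\otimes 1$ produces $m_A(\alpha\Lambda\otimes\beta(1))=\beta(1)\,\alpha\Lambda$ (treating $\beta(1)$ as a scalar factor, automatic when $A$ is commutative as in the applications here), and the summand $(\id_H\otimes\Lambda)\delta$ produces $m_A(\alpha\otimes\beta\Lambda)\delta$, which is $\alpha*_\delta\beta\Lambda$ by definition of the comodule convolution. Adding the two gives the claim. For part~(iii), let $\delta_N$ be the coaction on $N$; a comodule map $\phi\colon N\to M$ satisfies $\delta_M\phi=(\id_H\otimes\phi)\delta_N$. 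I would compute $\Delta(\Lambda\phi)=(\Delta\Lambda)\phi$ and insert the cocycle identity for $\Lambda$: the term $(\Lambda\otimes 1)\phi$ becomes $(\Lambda\phi)\otimes 1$, and $(\id_H\otimes\Lambda)\delta_M\phi=(\id_H\otimes\Lambda)(\id_H\otimes\phi)\delta_N=(\id_H\otimes\Lambda\phi)\delta_N$. Together these are exactly the cocycle identity for $\Lambda\phi$ relative to $\delta_N$, so $\Lambda\phi\in\Cycles^1(H,N)$.

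The computations are all formal, so there is no serious obstacle; the only step needing genuine care is part~(ii), where one must apply $\counit_H$ to \emph{both} tensor factors and invoke the comodule counit axiom, rather than a single counit, in order to collapse the middle term to $\counit_H\Lambda$ instead of leaving a stray $(\id_H\otimes\counit_H\Lambda)\delta$. Beyond that the risk is purely notational: keeping the $H$-leg and $M$-leg of each coaction straight, and, in part~(i), being consistent about the placement of the factor $\beta(1)$.
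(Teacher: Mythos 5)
Your proof is correct and takes essentially the same approach as the paper: each part follows by substituting the cocycle identity $\Delta\Lambda = \Lambda\otimes 1 + (\id\otimes\Lambda)\delta$ and invoking the (co)unit axioms, with your part (iii) computation being verbatim the paper's. The only cosmetic difference is in (ii), which the paper deduces from (i) by taking $\alpha=\beta=\counit$ (the convolution identity), whereas you apply $\counit\otimes\counit$ to the cocycle identity directly---but this unwinds to the identical calculation.
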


\begin{proof}
    \begin{enumerate}[label=(\roman*)]
        \item
            Immediate from the definition of 1-cocycles and convolution.
        \item
            Follows from (i) since $\counit$ is the identity for convolution.
        \item
            Write $\delta$ and $\delta'$ for the coactions on $M$ and $N$ respectively. We have
            \[
                \Delta \Lambda \phi = (\Lambda \otimes 1)\phi + (\id \otimes \Lambda)\delta \phi =
                \Lambda\phi \otimes 1 + (\id \otimes \Lambda \phi) \delta'. \qedhere
            \]
    \end{enumerate}
\end{proof}

Note in particular that if $\beta(1) = 0$ (e.g. if $\beta$ is an infinitesimal character) then (i)
just says $(\alpha * \beta)\Lambda = \alpha * \beta\Lambda$.

Suppose $\Lambda \in \Cycles^1(H)$. We can use $\Lambda$ to build new cocycles on various comodules.
Given a left comodule $M$ with coaction $\delta$ and a linear map $\psi\colon M \to \KK$, we define
$\Lambda \actoncocycle \psi = (\Lambda \otimes \psi)\delta$.

\begin{lemma} \label{thm:actoncocycle}
    Subject to the above assumptions, $\Lambda \actoncocycle \psi \in \Cycles^1(H, M)$.
\end{lemma}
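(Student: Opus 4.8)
The statement to prove is simply that $F := \Lambda \actoncocycle \psi = (\Lambda \otimes \psi)\delta$, viewed as a linear map $M \to H$ via the identification $H \otimes \KK \cong H$, satisfies the $1$-cocycle identity $\Delta F = F \otimes 1 + (\id_H \otimes F)\delta$. So the plan is to compute $\Delta F$ directly and rearrange it into this form using only two inputs: the hypothesis that $\Lambda \in \Cycles^1(H)$ and the coassociativity of the coaction $\delta$.

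First I would push $\Delta$ through the definition. Since $\psi$ takes values in the ground field, applying $\Delta$ to $F$ only affects the $\Lambda$ factor, giving $\Delta F = (\Delta\Lambda \otimes \psi)\delta$. Now I invoke the cocycle property of $\Lambda$, namely $\Delta\Lambda = \Lambda \otimes 1 + (\id_H \otimes \Lambda)\Delta$, and substitute. This splits $\Delta F$ into two summands, which I will show are exactly $F \otimes 1$ and $(\id_H \otimes F)\delta$ respectively.

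The first summand, coming from $\Lambda \otimes 1$, is immediate: it reads $((\Lambda \otimes \psi)\delta) \otimes 1 = F \otimes 1$, since the trailing $\otimes 1$ just rides along. The second summand, coming from $(\id_H \otimes \Lambda)\Delta$, is where the only real work lies. After substitution it is $(\id_H \otimes \Lambda \otimes \psi)(\Delta \otimes \id_M)\delta$, and the task is to recognize this as $(\id_H \otimes F)\delta = (\id_H \otimes \Lambda \otimes \psi)(\id_H \otimes \delta)\delta$. These agree precisely because of the comodule coassociativity axiom $(\Delta \otimes \id_M)\delta = (\id_H \otimes \delta)\delta$; this reassociation of the two copies of $\delta$ against the coproduct $\Delta$ is the heart of the argument and the one step I expect to require care in lining up the tensor factors. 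Writing $\delta(m) = \sum m_{(-1)} \otimes m_{(0)}$ in Sweedler notation makes the bookkeeping transparent: the term in question is $\sum m_{(-1)(1)} \otimes \Lambda(m_{(-1)(2)})\psi(m_{(0)})$, and coassociativity rewrites this as $\sum m_{(-1)} \otimes \Lambda(m_{(0)(-1)})\psi(m_{(0)(0)}) = (\id_H \otimes F)\delta(m)$, completing the verification.
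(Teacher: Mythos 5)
Your proposal is correct and follows essentially the same route as the paper's own proof: push $\Delta$ through onto the $\Lambda$ factor (since $\psi$ is scalar-valued), apply the cocycle identity for $\Lambda$, identify the first resulting term as $(\Lambda \actoncocycle \psi)\otimes 1$, and use comodule coassociativity $(\Delta \otimes \id_M)\delta = (\id_H \otimes \delta)\delta$ to recognize the second term as $(\id_H \otimes (\Lambda \actoncocycle \psi))\delta$. The only difference is presentational: the paper carries out the computation purely at the level of operator compositions, while you supplement it with a Sweedler-notation check, which is a harmless (and arguably clarifying) addition.
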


\begin{proof}
    We compute
    \begin{align*}
        \Delta (\Lambda \actoncocycle \psi)
        &= (\Delta\Lambda \otimes \psi) \delta \\
        &= (\Lambda \otimes \psi)\delta \otimes 1 + ((\id \otimes \Lambda)\Delta \otimes \psi)
        \delta \\
        &= (\Lambda \actoncocycle \psi) \otimes 1 + (\id \otimes \Lambda \otimes \psi)(\Delta
        \otimes \id)\delta \\
        &= (\Lambda \actoncocycle \psi) \otimes 1 + (\id \otimes \Lambda \otimes \psi)(\id
        \otimes \delta)\delta \\
        &= (\Lambda \actoncocycle \psi) \otimes 1 + (\id \otimes (\Lambda \actoncocycle
        \psi))\delta. \qedhere
    \end{align*}
\end{proof}

As a special case of this, note that $d\counit \actoncocycle \psi = d\psi$ (dropping the subscript to the coboundary map by the usual abuse of notation). When $M = H$ we can
write $\Lambda \actoncocycle \psi$ using the left action of $H^*$ on $H$ described above:
\[
    (\Lambda \actoncocycle \psi)h = \Lambda(\psi \leftact h).
\]
Using this operation and the integral cocycle from \cref{example:integral cocycle}, we can describe
all 1-cocycles on $\KK[L]$.

\begin{theorem}[Panzer {\cite[Theorem 2.6.4]{panzer-masters}}] \label{thm:polynomial cocycles}
    For any series $A(L) \in \KK[[L]]$, the operator
    \begin{equation} \label{eqn:1-cocycle integral}
        f(L) \mapsto \int_0^L A(d/du) f(u) \,du
    \end{equation}
    is a 1-cocycle on $\KK[L]$. Moreover, all 1-cocycles on $\KK[L]$ are of this form.
\end{theorem}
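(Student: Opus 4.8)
The plan is to identify the operators in \cref{eqn:1-cocycle integral} with the operators $\Int \actoncocycle \sigma$ for $\sigma \in \KK[L]^*$, where $\Int$ is the integration cocycle of \cref{example:integral cocycle}. Since $\Int$ is a $1$-cocycle, \cref{thm:actoncocycle} then gives for free that every such operator is a $1$-cocycle, which settles the first assertion; the real work is the converse, that every $1$-cocycle arises this way, together with the bookkeeping translating between the functional $\sigma$ and the series $A$.

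The bridge between the two descriptions is the following identity. Writing $D = \frac{d}{dL}$ and recalling that the coproduct on $\KK[L]$ is substitution $L \mapsto L_1 + L_2$, for any $\sigma \in \KK[L]^*$ I claim
\[
\sigma \leftact f = A(D) f, \qquad A(L) = \sum_{n \ge 0} \frac{\sigma(L^n)}{n!} L^n .
\]
It suffices to check this on the basis $\{L^k\}$: expanding $\Delta L^k = \sum_j \binom{k}{j} L^j \otimes L^{k-j}$ and applying $\id \otimes \sigma$ gives $\sigma \leftact L^k = \sum_n \binom{k}{n}\sigma(L^n) L^{k-n}$, while $A(D) L^k = \sum_n \frac{\sigma(L^n)}{n!} k^{\underline{n}} L^{k-n}$, and the two agree since $\binom{k}{n} = k^{\underline{n}}/n!$. (Here characteristic $0$ is used to divide by $n!$.) The assignment $\sigma \mapsto A$ is a bijection $\KK[L]^* \to \KK[[L]]$, with inverse determined by $\sigma(L^n) = n!\,[L^n]A$, so ranging over all $\sigma$ is the same as ranging over all $A$.

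Given this, the forward direction is immediate: the operator in \cref{eqn:1-cocycle integral} sends $f$ to $\Int(A(D)f) = \Int(\sigma \leftact f) = (\Int \actoncocycle \sigma)f$ for the corresponding $\sigma$, which is a $1$-cocycle by \cref{thm:actoncocycle}.

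For the converse, let $\Lambda$ be an arbitrary $1$-cocycle and set $\sigma = \lin \circ \Lambda$. The key observation is that extracting the linear coefficient in the second tensor factor of the coproduct is differentiation, $(\id \otimes \lin)\Delta = D$, as one checks on $L^k$. Applying $\id \otimes \lin$ to the cocycle identity $\Delta \Lambda = \Lambda \otimes 1 + (\id \otimes \Lambda)\Delta$ and using $\lin(1) = 0$ then yields $D \Lambda f = (\id \otimes \sigma)\Delta f = \sigma \leftact f$. Since $\counit \Lambda = 0$ by \cref{thm:cocycle counit} and $\counit$ is evaluation at $L = 0$, the polynomial $\Lambda f$ has no constant term, so integrating gives $\Lambda f = \Int(\sigma \leftact f) = (\Int \actoncocycle \sigma)f$; by the identity above this is exactly $\int_0^L A(d/du)f(u)\,du$ for the series $A$ attached to $\sigma$. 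The main obstacle is precisely this converse step — seeing that a $1$-cocycle on $\KK[L]$ is completely pinned down by its linear-coefficient functional $\sigma$ via the first-order equation $D\Lambda f = \sigma \leftact f$ together with the vanishing constant term; once that is in hand the rest is the elementary dictionary of the preceding paragraph.
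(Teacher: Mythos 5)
Your proposal is correct and takes essentially the same route as the paper: the paper defers this statement to Panzer but proves its tensor generalization (\cref{thm:polynomial cocycles tensor}) by exactly your argument --- the forward direction by recognizing the operator as $\Int \actoncocycle \sigma$ and invoking \cref{thm:actoncocycle}, and the converse by setting $\sigma = \lin\Lambda$, hitting the cocycle identity with $\id \otimes \lin$ to get $\frac{d}{dL}\Lambda f = \sigma \leftact f$, and integrating using the vanishing constant term from \cref{thm:cocycle counit}. Your explicit dictionary $\sigma \leftrightarrow A$ via the exponential generating function is the same bookkeeping the paper uses (and your factorial normalization is the right one).
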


\begin{corollary}
    The cohomology $\Homol^1(\KK[L])$ is 1-dimensional and generated by the class of the integral
    cocycle $\Int$.
\end{corollary}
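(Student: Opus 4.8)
The plan is to combine the parametrization of cocycles from \cref{thm:polynomial cocycles} with an explicit computation of the coboundary space. Write $\Lambda_A$ for the cocycle $f(L)\mapsto\int_0^L A(d/du)f(u)\,du$; then that theorem says $A\mapsto\Lambda_A$ is a linear isomorphism $\KK[[L]]\to\Cycles^1(\KK[L])$ (surjectivity is the content of \cref{thm:polynomial cocycles}, and injectivity is immediate since $\Lambda_A(L^n)$ determines $a_0,\dots,a_n$ triangularly). Note that $\Int=\Lambda_1$ corresponds to the constant series $A=1$. Since $\Homol^1(\KK[L]) = \Cycles^1(\KK[L])/d_0\Chains^0(\KK[L])$, the whole statement reduces to identifying which series $A$ give coboundaries, and I will show these are exactly the series with $A(0)=0$.

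First I would unravel the coboundary of a $0$-cochain. A $0$-cochain is a linear functional $\psi\colon\KK[L]\to\KK$ (as $H^{\otimes 0}=\KK$); encode it by $B_\psi(t)=\sum_{m\ge 0}\frac{\psi(L^m)}{m!}t^m\in\KK[[t]]$. Using the interpretation $\Delta f=f(L_1+L_2)$ together with Taylor's theorem, applying $\psi$ in the second tensor slot turns $(\id\otimes\psi)\Delta$ into the constant-coefficient differential operator $B_\psi(d/dL)$. The remaining term of $d_0\psi$ is the functional part $\psi(\cdot)\,1$, and a short computation (using $\psi(f)=(B_\psi(d/dL)f)\big|_{L=0}$) gives
\[
d_0\psi\colon f\longmapsto B_\psi(d/dL)f - \bigl(B_\psi(d/dL)f\bigr)\big|_{L=0}.
\]

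Next I would recognize the right-hand side as an integral cocycle. Because $\Int\circ\frac{d}{dL}=\id-(\cdot)\big|_{L=0}$ on $\KK[L]$ (that is, $\Int(g')=g-g(0)$), the operator above is exactly $\Lambda_A$ with $A(t)=t\,B_\psi(t)$. As $\psi$ ranges over all $0$-cochains, $B_\psi$ ranges over all of $\KK[[t]]$, so the coboundaries are precisely $\{\Lambda_A : A(0)=0\}$, which under the bijection of \cref{thm:polynomial cocycles} is the codimension-one subspace $L\KK[[L]]$. Hence $\Homol^1(\KK[L])\cong\KK[[L]]/L\KK[[L]]\cong\KK$ via $A\mapsto A(0)$, and since $\Int=\Lambda_1$ has $A(0)=1\ne 0$ it is not a coboundary, so its class generates.

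I expect the only real obstacle to be the bookkeeping in the second step: pinning down the coboundary sign convention and verifying cleanly that $(\id\otimes\psi)\Delta=B_\psi(d/dL)$ and $\psi(f)=(B_\psi(d/dL)f)\big|_{L=0}$. (These are consistent with the earlier observation that $d\counit\actoncocycle\psi=d\psi$.) Everything after that is forced by the identity $\Int\circ\frac{d}{dL}=\id-(\cdot)\big|_{L=0}$ and the bijection already established.
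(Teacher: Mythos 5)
Your proof is correct and follows essentially the same route as the paper: both rest on \cref{thm:polynomial cocycles} together with the single identity $\Int(g') = g - g(0)$ applied to $g = B(d/dL)f$, which the paper uses to split an arbitrary cocycle as $\Lambda_A = A(0)\,\Int + d\beta$ and which you use in the reverse direction to identify the coboundaries as exactly the $\Lambda_A$ with $A(0) = 0$. Your additional bookkeeping is sound and worth keeping --- the injectivity of $A \mapsto \Lambda_A$ is genuinely needed to conclude that $\Int$ is not itself a coboundary (the paper instead gets this from $\Int(1) = L \neq 0$), and your exponential normalization $B_\psi(t) = \sum_{m\ge 0} \psi(L^m)\,t^m/m!$ is the correct one, whereas the paper's stated $0$-cochain $L^n \mapsto [L^n]B(L)$ is off by a factor of $n!$.
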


\begin{proof}
    Note $\Int(1) = L$, so $\Int$ is not a coboundary. Now suppose $\Lambda$ is a 1-cocycle given by
    \cref{eqn:1-cocycle integral}. Write $A(L) = a_0 + LB(L)$ for some series $B(L)$. Then we have
    \begin{align*}
        \Lambda f(L)
        &= \int_0^L A(d/du) f(u)\,du \\
        &= a_0 \int_0^L f(u)\,du + \int_0^L \frac{d}{du}B(d/du)f(u)\,du \\
        &= a_0 \Int f(L) + B(d/dL)f(L) - B(d/dL)f(L)\big|_{L=0}
    \end{align*}
    hence $\Lambda = a_0\Int + d\beta$ where $\beta$ is the linear form $L^n \mapsto [L^n]B(L)$.
\end{proof}

\begin{remark} \label{remark:differential cocycle form}
    We can also write 1-cocycles on $\KK[L]$ in a different form, namely
    \[
        f(L) \mapsto f(\partial/\partial \rho) \frac{e^{L\rho} - 1}{\rho}A(\rho)\big|_{\rho = 0}.
    \]
    Checking on the basis of monomials quickly reveals that this is equivalent to the 1-cocycle that
    appears in the statement of \cref{thm:polynomial cocycles}. Operators of this form are often used by one of us in formulating Dyson--Schwinger equations, see for instance \cite{yeats:phd, marie-yeats, hihn-yeats}.
\end{remark}

In view of the comments above, it will turn out that the key to solving Dyson--Schwinger equations combinatorially will be in determining an explicit formula for the map $\CK \to \KK[L]$ induced by
$\Lambda$, in terms of the coefficients of the series $A(L)$.

This set up generalizes immediately to the case of the decorated Connes--Kreimer Hopf algebra $\CK_I$ and the 1-cocycles $B_+^{(i)}$ for $i\in I$.

\subsection{The renormalization group equation and the Riordan group}\label{subsec rio}

Let $\beta(x)$ and $\gamma(x)$ be formal power series, with $\beta(0) = 0$. The
\textit{renormalization group equation} (RGE) (or \textit{Callan--Symanzik equation}) is
\begin{equation} \label{eqn:rge}
    \left(\frac{\partial}{\partial L} - \beta(x) \frac{\partial}{\partial x} - \gamma(x)\right)G(x,
    L) = 0.
\end{equation}
As suggested by the notation, we will ultimately want to think of this $G(x, L)$ as the same one
which appears in the Dyson--Schwinger equation, but for the purposes of this section we can
consider it to be simply notation for the (potential) solution to this PDE.

\begin{remark}\label{rmk rge meaning}
  In the quantum field theory context, the renormalization group equation \cref{eqn:rge} is a very important differential equation because it describes how the Green function changes as the energy scale $L$ changes.

  The series $\beta(x)$ is the \emph{beta function} of the quantum field theory.  Thinking for a moment not in terms of formal power series, but in the physical context with functions, then $\beta$ should be the $L$ derivative of the coupling $x$.  Returning to the series context, this becomes essentially the linear term of what we will soon call the invariant charge, $Q$, see \cref{eqn:definition of q}.  The beta function is important physically because it describes how the coupling (which determines the strength of interactions) changes with the energy.  Zeros of the beta function are particularly important since the situation simplifies at such point.

  The series $\gamma(x)$ is the \emph{anomalous dimension}. It is a dimension in the sense of scaling dimension and it is anomalous in the sense that in the classical setting one would have a constant integer in place of $\gamma(x)$.
  A particularly easy case is when $\beta(x)=0$ in  \cref{eqn:rge} as there the differential equation can be solved by $e^{\gamma(x)L}$ which is of a particularly simple form sometimes called a scaling solution.

  Returning to our formal series context, taking the coefficient of $L^0$ in \cref{eqn:rge} we see that the anomalous dimension is nothing other than the linear term in $L$ of $G$ (in the cases of physical interest we will always have $g_0=1$).  Furthermore, writing $G(x,L) = 1+\sum_{i\geq 0}g_i(x)L^i$ (so $\gamma(x) = g_1(x)$) and taking the coefficient of $L^{k-1}$ in \cref{eqn:rge} we obtain
  \[
  g_k(x) = \frac{1}{k}\left(\beta(x)\frac{d}{dx} + \gamma(x)\right)g_{k-1}(x)
  \]
  so we see that knowing $\gamma(x)$ and $\beta(x)$ is sufficient to recursively determine all the $g_k(x)$ and hence to determine $G(x,L)$.  Furthermore, since $\beta(x)$ is essentially the linear term of the invariant charge, in the single equation case $\beta(x)$ is just a normalization away from anomalous dimension $\gamma(x)$ and in the system case is a linear combination of the anomalous dimensions of the different $G_i(x,L)$ in the system (see \cite{yeats:phd}).  Overall, then we conclude that knowing the anomalous dimension(s) suffices to determine the Green functions.  Usually we will none the less work at the level of the Green function, but sometimes it will be convenient to work only with the anomalous dimension, which we are free to do since this does not lose any information.
\end{remark}

The goal of this
section is to explain how \cref{eqn:rge} is intimately related to a certain Hopf algebra. As a
starting point, notice that if $\gamma(x) = 0$ we have already seen this equation: by \cref{thm:fdb
pde} it describes a bialgebra morphism $\FdB \to \KK[L]$. We will show that something similar is
true for \cref{eqn:rge}. %Let us make clear at the outset that more or less everything in this
%section is already known in one form or another, but perhaps not as well-known as it should be.

Recall that $\widetilde \DiffGp$ denotes the group of formal power series with
zero constant term and nonzero linear term under composition and $\DiffGp$ the subgroup of
$\delta$-series, and that $\DiffGp^\mathrm{op}$ is isomorphic to the character group of $\FdB$. Now
observe that for $\Phi(x) \in \widetilde\DiffGp$ the map $F(x) \mapsto F(\Phi(x))$ is a ring
automorphism of $\KK[[x]]$. Moreover, composing these automorphisms corresponds to composing the
series in reverse, so $\widetilde \DiffGp^\mathrm{op}$ (and hence also $\DiffGp^\mathrm{op}$) acts
by automorphisms on $\KK[[x]]$. Consequently they also act on $\KK[[x]]^\times$, the multiplicative
group of power series with nonzero constant term. Let $\KK[[x]]^\times_1$ be the subgroup of
$\KK[[x]]^\times$ consisting of those series with constant term 1. The \textit{Riordan group} is the
semidirect product $\RioGp = \KK[[x]]^\times_1 \rtimes \DiffGp$. Explicitly, the elements consist of
pairs $(F(x), \Phi(x))$ of series with $F(x) \in \KK[[x]]^\times_1$ and $\Phi(x) \in \DiffGp$, with
the operation
\[
    (F(x), \Phi(x)) * (G(x), \Psi(x)) = (F(x)G(\Phi(x)), \Psi(\Phi(x))).
\]

\begin{remark}
    The Riordan group was first introduced---at least under that name---by Shapiro, Getu, Woan, and
    Woodson \cite{sgww}. It is usually thought of as a group of infinite matrices, via the
    correspondence
    \[
        (F(x), \Phi(x)) \mapsto \Big[ [x^i] F(x)G(x)^j \Big]_{i,j \in \Nat}
    \]
    sending a pair of series to their \textit{Riordan matrix}. (This is simply a matrix
    representation of the natural action of $\RioGp$ on $\KK[[x]]$.) Conventions vary on whether or
    not to include the restrictions on coefficients; our choice matches the original definition in
    \cite{sgww} as well as being convenient for relating $\RioGp$ to a Hopf algebra.
\end{remark}

\label{symboldef:rio}We now wish to define a Hopf algebra with $\RioGp$ as its character group,
similar to the Faà di Bruno Hopf algebra. We will call it the \textit{Riordan Hopf algebra} and
denote it by $\RioHopf$.  As an algebra, $\RioHopf$ is a free commutative algebra in two sets of
generators $\{\pi_1, \pi_2, \dots\}$ and $\{y_1, y_2, \dots\}$. The $\pi$'s will generate a copy of
$\FdB$; in particular, their coproduct is still given by \cref{eqn:fdb coproduct}. (This inclusion
$\FdB \to \RioHopf$ is dual to the quotient map $\RioGp \to \DiffGp^\mathrm{op}$ coming from the
semidirect product.) We assemble the $y$'s into a power series as well, this time in the more
obvious way:
\[
    Y(x) = 1 + \sum_{n \ge 1} y_n x^n.
\]
Then the coproduct is given by
\begin{equation} \label{eqn:rio coproduct}
    \Delta y_n = \sum_{j = 0}^n [x^n] Y(x)\Pi(x)^j \otimes y_j.
\end{equation}

Analogously to \cref{thm:fdb convolution}, we easily get the following result.

\begin{proposition} \label{thm:rio convolution}
    Let $A$ be a commutative algebra and $\phi, \psi\colon \RioHopf \to A$ be algebra morphisms. Let
    $F(x) = \phi(Y(x))$, $\Phi(x) = \phi(\Pi(x))$, $G(x) = \psi(Y(x))$, and $\Psi(x) =
    \psi(\Pi(x))$. Then
    \[
        (\phi * \psi)(Y(x)) = F(x)G(\Phi(x))
    \]
    and
    \[
        (\phi * \psi)(\Pi(x)) = \Psi(\Phi(x)).
    \]
    Consequently, $\Ch(\RioHopf) \cong \RioGp$.
\end{proposition}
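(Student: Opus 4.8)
The plan is to reduce the second identity to the already-established Faà di Bruno case and then establish the first identity by a direct coefficient computation, after which the group isomorphism falls out formally. Since the $\pi$'s generate a sub-bialgebra isomorphic to $\FdB$ carrying the same coproduct \cref{eqn:fdb coproduct}, the identity $(\phi * \psi)(\Pi(x)) = \Psi(\Phi(x))$ is literally \cref{thm:fdb convolution} applied to the restrictions of $\phi$ and $\psi$ to this subalgebra; nothing new is needed there.

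For the first identity I would expand $(\phi * \psi)(y_n) = m_A(\phi \otimes \psi)\Delta y_n$ using \cref{eqn:rio coproduct}. The key observation is that the coefficient $[x^n](Y(x)\Pi(x)^j)$ is an element of $\RioHopf$ and that applying the algebra morphism $\phi$ to it commutes with coefficient extraction, so $\phi([x^n](Y(x)\Pi(x)^j)) = [x^n](F(x)\Phi(x)^j)$. This gives $(\phi * \psi)(y_n) = \sum_{j=0}^n [x^n]\left(F(x)\Phi(x)^j\right)\psi(y_j)$. Assembling these into the series $(\phi * \psi)(Y(x)) = \sum_{n \ge 0} (\phi * \psi)(y_n)x^n$, using $(\phi * \psi)(1) = \phi(1)\psi(1) = 1$ and the convention $y_0 = 1$, and then swapping the order of summation over $n$ and $j$ — legitimate since $F(x)\Phi(x)^j$ has no terms of degree below $j$, as $\Phi(x) = x + \cdots$ — collapses the inner sum into the full series $F(x)\Phi(x)^j$. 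Recognizing $\sum_{j \ge 0}\psi(y_j)\Phi(x)^j = G(\Phi(x))$ then yields $(\phi * \psi)(Y(x)) = F(x)G(\Phi(x))$, as claimed.

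Finally, I would deduce $\Ch(\RioHopf) \cong \RioGp$ from these two identities. Because $\RioHopf$ is a free commutative algebra on the $y_n$ and $\pi_n$, any character is uniquely determined by, and may freely prescribe, the coefficients of $F(x) = \zeta(Y(x)) \in \KK[[x]]^\times_1$ and $\Phi(x) = \zeta(\Pi(x)) \in \DiffGp$; hence $\zeta \mapsto (F(x), \Phi(x))$ is a bijection from $\Ch(\RioHopf)$ onto $\RioGp$ as a set. The two convolution formulas say precisely that this map intertwines the convolution product with the Riordan operation $(F, \Phi) * (G, \Psi) = (F(x)G(\Phi(x)), \Psi(\Phi(x)))$, so it is a group isomorphism.

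I expect the only real subtlety to be the bookkeeping in the summation swap and confirming that the chosen conventions produce an honest isomorphism rather than an anti-isomorphism. The latter is ensured by the order of the tensor factors in \cref{eqn:rio coproduct}, exactly parallel to the discussion of $\DiffGp^{\mathrm{op}}$ following \cref{thm:fdb convolution}; the composition appears in the second slot as $\Psi(\Phi(x))$, matching the Riordan group operation as defined. Everything else is routine and closely mirrors the Faà di Bruno computation.
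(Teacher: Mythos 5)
Your proposal is correct and takes essentially the same approach as the paper: the paper dispenses with this proposition by noting it follows ``analogously to \cref{thm:fdb convolution}'', i.e.\ by direct computation from the coproduct formula \cref{eqn:rio coproduct}, which is precisely what you carry out (obtaining the $\Pi(x)$ identity by restricting to the $\FdB$ subalgebra and the $Y(x)$ identity by the coefficient expansion and summation swap). Your closing observation that the conventions yield an honest isomorphism $\Ch(\RioHopf) \cong \RioGp$, rather than an anti-isomorphism, likewise matches the paper's discussion of how the Riordan group operation absorbs the composition reversal.
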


We also have an analogue of \cref{thm:fdb actions}. Note that since the $\pi$'s generate a copy of
$\FdB$ we can simply apply \cref{thm:fdb actions} itself to see how elements of the dual act on
them. Thus we only need the actions on $Y(x)$.

\begin{proposition}
    Suppose $\phi \in \RioHopf^*$ and let $F(x) = \phi(Y(x))$ and $\Phi(x) = \phi(\Pi(x))$. Then
    \begin{enumerate}[label=(\roman*),ref=\thetheorem(\roman*)]
        \item \label[proposition]{thm:rio action left}
            $\phi \leftact Y(x) = F(\Pi(x))Y(x)$.
        \item
            If $\phi \in \Ch(\FdB)$ then $Y(x) \rightact \phi = F(x)Y(\Phi(x))$.
        \item \label[proposition]{thm:rio inf char right}
            If $\phi \in \ch(\FdB)$ then $\Pi(x) \rightact \phi = \Phi(x)Y'(x) +  F(x)Y(x)$.
    \end{enumerate}
\end{proposition}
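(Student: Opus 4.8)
The plan is to reduce all three identities to a coefficientwise computation on the generators $y_n$ using the coproduct \cref{eqn:rio coproduct}. The actions on $\Pi(x)$ require nothing new: the coproduct \cref{eqn:fdb coproduct} of each $\pi_n$ involves only the $\pi$'s, so they span a sub-bialgebra on which $\phi$ acts exactly as in \cref{thm:fdb actions}. Hence the only content to establish is the action on $Y(x)$, and I would obtain each formula by applying the relevant action to $y_n$, simplifying via \cref{eqn:rio coproduct}, and then resumming the generating series in $x$.

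For part (i) I would use $\phi \leftact y_n = (\id \otimes \phi)\Delta(y_n)$. Here $\phi$ lands in the right tensor factor, so it simply evaluates on the generators, $\phi(y_j) = [x^j]F(x)$, while the polynomial $[x^n]\!\left(Y(x)\Pi(x)^j\right)$ stays in $\RioHopf$. Resumming over $n$ recovers the series $Y(x)\Pi(x)^j$, and since $Y(x)\Pi(x)^j$ has lowest degree $j$ the finite sums may be extended over all $j \ge 0$ without changing any coefficient; thus $\phi \leftact Y(x) = \sum_{j \ge 0}([x^j]F(x))\,Y(x)\Pi(x)^j = F(\Pi(x))Y(x)$, with no hypothesis on $\phi$. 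For part (ii) I would instead use $y_n \rightact \phi = (\phi \otimes \id)\Delta(y_n)$, so that $\phi$ is now applied to the left tensor factor $[x^n]\!\left(Y(x)\Pi(x)^j\right)$. Because $\phi$ is a character (of $\RioHopf$) it is multiplicative, giving $\phi\!\left(Y(x)\Pi(x)^j\right) = F(x)\Phi(x)^j$; resumming over $n$ and using $\Phi(0) = 0$ to truncate yields $Y(x) \rightact \phi = \sum_{j \ge 0} y_j\,F(x)\Phi(x)^j = F(x)Y(\Phi(x))$.

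Part (iii) is the same computation for an infinitesimal character, where $\phi$ is now a derivation. Applying the Leibniz rule across the $j+1$ factors of $Y(x)\Pi(x)^j$ and substituting the counit values $\counit(Y(x)) = 1$ and $\counit(\Pi(x)) = x$ gives $\phi\!\left(Y(x)\Pi(x)^j\right) = F(x)x^j + j\,\Phi(x)x^{j-1}$. Resumming, and recognizing $\sum_{j \ge 0} y_j x^j = Y(x)$ and $\sum_{j \ge 0} j y_j x^{j-1} = Y'(x)$, produces $F(x)Y(x) + \Phi(x)Y'(x)$, which is the asserted formula (the left-hand side should read $Y(x) \rightact \phi$, not $\Pi(x) \rightact \phi$).

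I expect no genuine obstacle: the argument is bookkeeping with coefficient extraction, parallel to \cref{thm:fdb actions}. The one place demanding care is part (iii), where the derivation must be expanded correctly over the product and the counit identities $\counit(Y(x)) = 1$, $\counit(\Pi(x)) = x$ invoked; these hold because $\RioHopf$ is connected graded with every $y_n,\pi_n$ in positive degree, and it is exactly they that make the two resulting sums collapse cleanly into $Y(x)$ and $Y'(x)$. A secondary point is to track lowest degrees so that each finite sum over $0 \le j \le n$ may be harmlessly extended to all $j \ge 0$ when reassembling the generating series.
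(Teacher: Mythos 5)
Your proof is correct and takes essentially the same route as the paper: the paper states this proposition without an explicit proof, presenting it (like \cref{thm:fdb actions}) as following directly from the coproduct formula \cref{eqn:rio coproduct}, which is precisely the coefficientwise computation on the $y_n$ that you carry out, including extending the finite sums using the vanishing of low-order coefficients. You are also right that the left-hand side of (iii) is a typo for $Y(x) \rightact \phi$ (and similarly the hypotheses should refer to $\Ch(\RioHopf)$ and $\ch(\RioHopf)$); the paper's later use of the identity in the proof of \cref{thm:rio pde} confirms this reading.
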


Finally we reach the main result of this section.

\begin{theorem} \label{thm:rio pde}
    Let $\phi \colon \RioHopf \to \KK[L]$ be an algebra morphism and let $F(x, L) = \phi(Y(x))$ and
    $\Phi(x, L) = \phi(\Pi(x))$. Let $\beta(x)$ be the linear term in $L$ of $\Phi(x, L)$ and
    $\gamma(x)$ the linear term in $L$ of $F(x, L)$. Suppose $\phi$ is a bialgebra morphism when
    restricted to the subalgebra $\FdB$. Then $\phi$ is a bialgebra morphism on all of $\RioHopf$ if
    and only if $F(x, L)$ satisfies the renormalization group equation
    \begin{equation} \label{eqn:rio pde}
        \left(\frac{\partial}{\partial L} - \beta(x) \frac{\partial}{\partial x} -
        \gamma(x)\right)F(x, L) = 0.
    \end{equation}
\end{theorem}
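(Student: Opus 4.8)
The plan is to mirror the proof of \cref{thm:fdb pde}, replacing the single generating series $\Pi(x)$ by the pair $\Pi(x), Y(x)$ and invoking the Riordan action formula \cref{thm:rio inf char right} in place of its Faà di Bruno counterpart \cref{thm:fdb inf char right}. Since $\RioHopf$ is graded and connected, \cref{thm inf char} applies directly: $\phi$ is a bialgebra morphism if and only if $\phi|_{L=0} = \counit$ and $\frac{d}{dL}\phi = (\lin\phi) * \phi$. Because $\phi$ is an algebra morphism, and because both $\frac{d}{dL}\phi$ and $(\lin\phi)*\phi$ obey the Leibniz rule relative to $\phi$ (the latter by a short computation using that $\lin\phi$ is an infinitesimal character whenever $\phi|_{L=0}=\counit$), each of these two conditions may be checked on the algebra generators, equivalently on the two series $\Pi(x)$ and $Y(x)$.

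On $\Pi(x)$ the coproduct \cref{eqn:fdb coproduct} involves only the $\pi$'s, so both conditions restricted there say exactly that $\phi|_{\FdB}$ is a bialgebra morphism; this is the hypothesis, so nothing remains to check. It therefore suffices to analyze the conditions on $Y(x)$. The initial-value half, $\phi|_{L=0}=\counit$ evaluated on the $y$'s, reads $F(x,0) = \counit(Y(x)) = 1$ (the $\Pi$-part $\Phi(x,0)=x$ being already supplied by the $\FdB$ hypothesis). For the differential half I would unravel the convolution through the right action of $\RioHopf^*$ on $\RioHopf$, exactly as in \cref{thm:fdb pde}, to get
\[
\frac{\partial F}{\partial L} = \big((\lin\phi)*\phi\big)(Y(x)) = \phi\big(Y(x) \rightact \lin\phi\big).
\]
Provided $\lin\phi$ is an infinitesimal character, \cref{thm:rio inf char right} evaluates $Y(x)\rightact\lin\phi = \beta(x)Y'(x) + \gamma(x)Y(x)$, since $\lin\phi(\Pi(x))=\beta(x)$ and $\lin\phi(Y(x))=\gamma(x)$ are precisely the stated linear-in-$L$ terms. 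Applying $\phi$ and using that $\phi$ acts coefficientwise in $x$, hence commutes with $\partial/\partial x$, turns the right-hand side into $\beta(x)\frac{\partial F}{\partial x} + \gamma(x)F$, so the differential condition on $Y(x)$ is exactly the renormalization group equation \cref{eqn:rio pde}.

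Assembling the pieces yields the biconditional: granting the $\FdB$ hypothesis, $\phi$ is a bialgebra morphism if and only if the $Y$-conditions hold. The step requiring the most care is the logical ordering in the ``if'' direction, and this is where I expect the main subtlety to lie: to invoke \cref{thm:rio inf char right} at all one first needs $\lin\phi$ to be an infinitesimal character, which by \cref{thm inf char} is equivalent to $\phi|_{L=0}=\counit$, i.e. to the initial condition $F(x,0)=1$. Thus the initial condition is not incidental but must be in force before the RGE computation is even meaningful; it is the exact analogue of the condition $\Phi(x,0)=x$ in \cref{thm:fdb pde}, and I would expect it to be stated alongside the RGE (indeed a constant-in-$L$ morphism with $F(x,0)\neq 1$ satisfies the RGE trivially yet fails to be a bialgebra morphism). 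Beyond this bookkeeping the argument is purely formal, the only genuine new input being the action formula \cref{thm:rio inf char right}, whose own justification is the natural Riordan extension of \cref{thm:fdb inf char right}.
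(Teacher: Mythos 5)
Your argument is correct and is essentially the paper's own proof: the paper likewise reduces via \cref{thm inf char} to checking $\phi|_{L=0}=\counit$ and $\frac{d}{dL}\phi = (\lin\phi)*\phi$ on the generators, and evaluates the convolution on $Y(x)$ by \cref{thm:rio inf char right} (whose printed statement has a typo --- its left-hand side should read $Y(x) \rightact \phi$, which is how you use it). The one place you go beyond the paper is exactly the subtlety you flag, and it is genuine: the paper's two-line proof says it is ``necessary and sufficient to have $\frac{d}{dL}\phi = (\lin\phi)*\phi$,'' thereby silently discarding the counit condition, but the $\FdB$ hypothesis only forces $\Phi(x,0)=x$ and says nothing about $F(x,0)$, and \cref{eqn:rio pde} does not imply $F(x,0)=1$ either. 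Your counterexample is valid: take $\phi(\pi_n)=0$ and $\phi(y_n)=1$ for all $n$; then $\phi|_{\FdB}$ is a bialgebra morphism, $\beta(x)=\gamma(x)=0$, and $F(x,L)=1+\sum_{n\ge 1}x^n$ satisfies \cref{eqn:rio pde} trivially, yet $\phi$ fails to preserve the counit, so the ``if'' direction of the theorem as literally stated is false. The statement should include $F(x,0)=1$ alongside the RGE, the exact analogue of the condition $\Phi(x,0)=x$ that \cref{thm:fdb pde} does state explicitly; your proof, which keeps this condition in force (as it must, since $\lin\phi$ being an infinitesimal character is needed before \cref{thm:rio inf char right} can be invoked), establishes the corrected statement. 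Nothing downstream in the paper is affected, since in the applications the series playing the role of $F$ has constant term $1$ at $L=0$ by construction.
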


\begin{proof}
    By the same argument as \cref{thm:fdb pde} it is necessary and sufficient to have
    $\frac{d}{dL}\phi = (\lin \phi) * \phi$. Applying \cref{thm:rio inf char right} gives the
    result.
\end{proof}

Obviously, if we assume that $\phi$ is merely an algebra map $\RioHopf \to \KK[L]$ then it is a
bialgebra morphism if and only if it satisfies the conditions of both \cref{thm:fdb pde} and
\cref{thm:rio pde}.

\begin{remark}
    A result equivalent to \cref{thm:rio pde} was proved by Bacher \cite[Proposition 7.1]{bacher06}.
    He does not take a Hopf algebra perspective but instead essentially works with the Lie algebra
    $\ch(\RioHopf)$ in a matrix representation and for an element $\sigma \in \ch(\RioHopf)$
    corresponding to the pair $(\gamma(x), \beta(x))$ characterizes $\exp_*(L\sigma)$ as (the
    Riordan matrix of) the solution to \cref{eqn:rio pde} and \cref{eqn:fdb pde}, which is
    equivalent to our result by \cref{thm inf char}. That the PDE in question is in fact
    the renormalization group equation seems not to have been noticed.
\end{remark}

The key insight here is that working with $\RioHopf$ lets us separate out $Y$ and $\Pi$ in a way that's ideally suited for understanding the renormalization group equation, and which we will use to understand the role of the invariant charge in the following.

\subsection{Dyson--Schwinger equations}\label{subsec dse}

Now we are ready to give the honest Dyson--Schwinger equations, not just their combinatorial avatars, in the form in which we will use them.

As in the combinatorial set up let $P$ be a set
(finite or infinite) and assign each $p \in P$ a weight $w_p \in \Natp$, such that there are only
finitely many elements of each weight, and an insertion exponent $\mu_p \in \KK$.
 To each $p \in P$ we also associate a 1-cocycle $\Lambda_p$ on the polynomial Hopf
algebra $\KK[L]$. The \textit{Dyson--Schwinger equation} (DSE) defined by these data is
\begin{equation} \label{eq:dse lambda single}
    G(x, L) = 1 + \sum_{p \in P} x^{w_p} \Lambda_p(G(x, L)^{\mu_p}).
\end{equation}
(Note that here and throughout, expressions such as $\Lambda_p(G(x, L)^{\mu_p})$ are to
be interpreted as meaning that we expand the argument as a series in $x$ and apply the operator
coefficientwise.)  Using the same data but with $B_+^{(p)}$ in place of $\Lambda_p$ we see that the corresponding combinatorial Dyson--Schwinger equation is the one in \cref{eq:cdse single}.

By \cref{thm:polynomial cocycles} we can write
\begin{equation} \label{eqn:reminder of polynomial cocycles}
    \Lambda_p f(L) = \int_0^L A_p(d/du) f(u)\,du
\end{equation}
for some series $A_p(L) \in \KK[[L]]$ which physically is more or less the Mellin transform of $p$.

A particularly nice case, which covers most of the physically reasonable examples, is when there is
a linear relationship between the weights and the insertion exponents: $\mu_p = 1 + sw_p$ for some
$s \in \KK$. In this case we can combine terms to get
\begin{equation} \label{eq:dse lambda single simple}
    G(x, L) = 1 + \sum_{k \ge 1} x^k \Lambda_k(G(x, L)^{1+sk}).
\end{equation}
Previous work on combinatorial solutions to Dyson--Schwinger equations has focused on this case, and
indeed equations of this form have some special properties which we discuss below. However, the tubing solutions apply in the more general form \cref{eq:dse lambda single}.

\begin{remark} \label{remark:physical dses}
  These Dyson--Schwinger equations are not yet quite in the form one would usually see in the physics literature.  As a first step, using \cref{remark:differential cocycle form}, we obtain the Dyson--Schwinger equations in the form usually presented by one of us \cite{yeats:phd, marie-yeats, hihn-yeats}.  Using \cref{eqn:reminder of polynomial cocycles} brings us closer to the integral equation form that perturbative Dyson--Schwinger equations are typically written in.  See \cite{yeats:phd} or \cite{OH:phd} for a derivation relating them.  The textbook presentation of Dyson--Schwinger equations is often one step further distant. Taking a perturbative or diagrammatic expansion (along with the usual techniques to reduce to one particle irreducible Green functions) bridges this last gap, see for example \cite{Sprimer}.

  The diagrammatic form of the Dyson--Schwinger equations mentioned above is perhaps the easiest
  perspective to get an intuition for what these equations are telling us---they are telling us how
  to build all Feynman diagrams contributing to a given process by inserting simpler
  primitive\footnote{primitive in a renormalization Hopf algebra, or from a physics perspective, subdivergence-free} Feynman diagrams into themselves.
  This explains some otherwise mysterious aspects of the nomenclature.  The indexing set $I$ for
  systems is typically giving the external edges of the diagram.  The weight is the loop order
  (dimension of the cycle space) of the primitive diagram.  The insertion exponent counts how the
  number of places a Feynman diagram of the given external edge structure can be inserted grows as
  the loop order grows.  The function $A(\rho)/\rho$ is the Mellin transform of the Feynman integral
  for the primitive diagram regularized at the insertion place.
\end{remark}

As before, we are interested not only in single equations but also
systems.  In that case we partition our indexing set $P$ into $\{P_i\}_{i \in I}$
for some finite set $I$ which will index the equations in the system. Each $p \in P$ is still
assigned a simple weight $w_p \in \Natp$ but the insertion exponent is now an insertion exponent
vector $\mu_p \in \KK^I$. We are now solving for a vector of series $\mathbf G(x, L) = (G_i(x,
L))_{i \in I}$. The system of equations we consider is
\begin{equation} \label{eq:dse lambda system}
    G_i(x, L) = 1 + \sum_{p \in P_i} x^{w_p} \Lambda_p(\mathbf G(x, L)^{\mu_p}).
\end{equation}
The corresponding combinatorial Dyson--Schwinger equation we saw in \cref{eq:cdse system}.

The analogue of the special case \cref{eq:dse lambda single simple} is the existence of a so-called
\textit{invariant charge} for the system, which we define in the following and which is closely related to $G(x,L)$ satisfying a renormalization group equation.

We will begin with the simplest case \cref{eq:dse lambda single}. By \cref{thm:rio pde}, we see that
$G(x, L)$ satisfies a renormalization group equation if there exists a bialgebra morphism $\RioHopf
\to \KK[[L]]$ that sends $Y(x)$ to $G(x, L)$. It is natural to lift to the combinatorial equation
\cref{eq:cdse single} and ask instead for a bialgebra morphism $\RioHopf \to \CK_P$ that sends
$Y(x)$ to $T(x)$. The question then is where $\Pi(x)$ should be mapped. We wish to construct from
$T(x)$ an auxiliary series $Q(x) \in \CK_P[[x]]$---the invariant charge---such that the map sending
$\Pi(x)$ to $Q(x)$ and $Y(x)$ to $T(x)$ is a bialgebra morphism. Note that this is unique if it
exists since the coproduct formula \cref{eqn:rio coproduct} allows us to recover it from the
coproducts of coefficients of $T(x)$. It turns out that the case in which we can ensure this exists
is exactly the special case \cref{eq:dse lambda single simple}.\footnote{We will only show
sufficiency; for necessity see \cite[Proposition 10]{foissy14} although note that the setup there is
somewhat different from ours.}

\begin{proposition} \label{thm:dse single rio}
    Let $T(x) \in \CK_{\Natp}[[x]]$ be the solution of the combinatorial Dyson--Schwinger equation
    \[
        T(x) = 1 + \sum_{k \ge 1} x^kB_+^{(k)}(T(x)^{1+sk}).
    \]
    Then the algebra morphism $\phi\colon \RioHopf \to \CK_{\Natp}$ defined by $\phi(Y(x)) = T(x)$
    and $\phi(\Pi(x)) = xT(x)^s$ is a bialgebra morphism. As a consequence, the solution $G(x, L)$
    to the corresponding Dyson--Schwinger equation \cref{eq:dse lambda single simple} satisfies the
    renormalization group equation
    \[
        \left(\frac{\partial}{\partial L} - sx\gamma(x)\frac{\partial}{\partial x} -
        \gamma(x)\right)G(x, L) = 0
    \]
    where $\gamma(x)$ is the linear term in $L$ of $G(x, L)$.
\end{proposition}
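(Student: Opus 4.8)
The statement has two parts: the bialgebra claim for $\phi$, and the deduction of the renormalization group equation; I will treat the bialgebra claim as the main content and the RGE as a short consequence at the end. For the bialgebra claim, the counit condition is immediate on generators, since $\counit T(x) = 1$ and $\counit(xT(x)^s) = x$. For coproduct compatibility, note that because $\phi$ is an algebra morphism, both $\Delta_{\CK_{\Natp}}\circ\phi$ and $(\phi\otimes\phi)\circ\Delta_{\RioHopf}$ are algebra morphisms $\RioHopf \to \CK_{\Natp}^{\otimes 2}$, so they agree as soon as they agree on the coefficients of $Y(x)$ and $\Pi(x)$. Writing $Q(x) = xT(x)^s$, $t_j = [x^j]T$ and $q_j = [x^j]T^s$, the defining coproducts \cref{eqn:rio coproduct} and \cref{eqn:fdb coproduct} turn these two requirements into the generating-series identities $\Delta T = \sum_{j\ge 0} TQ^j\otimes t_j$ and $\Delta Q = \sum_{j\ge 0} Q^{j+1}\otimes q_j$. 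The second will follow from the first: the map $\widehat{(\cdot)}\colon S \mapsto \sum_j Q^j \otimes [x^j]S$ is an algebra morphism $\CK_{\Natp}[[x]] \to (\CK_{\Natp}\otimes\CK_{\Natp})[[x]]$, so the first identity, written as $\Delta T = (T\otimes 1)\widehat{T}$, can be raised to any power to give $\Delta(T^a) = (T^a\otimes 1)\widehat{T^a} = \sum_j T^aQ^j\otimes [x^j](T^a)$; taking $a = s$ and multiplying by $x$ yields exactly the $\Pi$-identity because $xT^s = Q$.

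Everything therefore reduces to the single identity $\Delta T = \sum_j TQ^j\otimes t_j$, which is where I expect the real work to lie. The plan is to apply $\Delta$ to the fixed-point equation $T = 1 + \sum_k x^k B_+^{(k)}(T^{1+sk})$ and use the $1$-cocycle property of each $B_+^{(k)}$ to obtain
\[
\Delta T = T\otimes 1 + \sum_{k\ge 1} x^k(\id\otimes B_+^{(k)})(\Delta T)^{1+sk}.
\]
Grading by powers of $x$ shows this recursion has a unique solution with constant coefficient $1\otimes 1$ (for $k\ge 1$ the coefficient of $x^n$ on the right depends only on coefficients of degree $<n$), so it suffices to verify that the candidate $X := \sum_j TQ^j\otimes t_j$ satisfies it. Expanding $t_j = \sum_k B_+^{(k)}\big([x^{j-k}]T^{1+sk}\big)$ from the fixed-point equation and matching terms, the verification collapses to the elementary identity $x^k T^{1+sk}Q^m = TQ^{m+k}$. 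This is precisely where the choice $Q = xT^s$ is forced, since $Q^k = x^kT^{sk}$ makes $x^kT^{1+sk} = TQ^k$. I expect the bookkeeping of this term-matching, together with the careful justification of uniqueness via the grading, to be the main obstacle, while the conceptual content is exactly the observation that $Q = xT^s$ is the invariant charge making the cocycle recursion compatible with the Riordan coproduct.

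Finally, for the consequence I would compose $\phi$ with the Feynman-rules morphism $\rho\colon \CK_{\Natp}\to\KK[L]$ induced via \cref{thm:ck universal decorated} by the cocycles $\Lambda_k$ (so $\rho B_+^{(k)} = \Lambda_k\rho$; since the $\Lambda_k$ are $1$-cocycles, $\rho$ is a bialgebra morphism). Applying $\rho$ to the fixed-point equation shows $\rho(T(x))$ solves \cref{eq:dse lambda single simple}, hence equals $G(x,L)$ by uniqueness, so $\rho\phi$ is a bialgebra morphism $\RioHopf \to \KK[L]$ with $\rho\phi(Y(x)) = G(x,L)$ and $\rho\phi(\Pi(x)) = xG(x,L)^s$. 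Its restriction to $\FdB$ is a bialgebra morphism, so \cref{thm:rio pde} applies and its \emph{only if} direction yields an RGE for $G$ whose $\beta(x)$ is the linear-in-$L$ part of $xG(x,L)^s$ and whose $\gamma(x)$ is the linear-in-$L$ part of $G$. Since $G|_{L=0}=1$ by \cref{thm inf char}, I may write $G = 1 + \gamma(x)L + O(L^2)$, whence $xG^s = x + sx\gamma(x)L + O(L^2)$ identifies $\beta(x) = sx\gamma(x)$, giving exactly the claimed equation.
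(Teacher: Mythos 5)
Your proof is correct, but it is organized quite differently from the paper's. The paper filters $\RioHopf$ by the sub-bialgebras $\RioHopf^{(n)}$ and runs an induction that alternates between the two families of generators: the coproduct of $\pi_{n-1}$ is handled by an abstract coproduct-of-powers identity proved inside $\RioHopf$ (\cref{thm:general rio coproduct}), and the passage from $\FdB^{(n)}$ to $\RioHopf^{(n)}$ is delegated to a standalone lemma (\cref{thm:cocycle fdb implies rio}) valid for \emph{any} target bialgebra equipped with 1-cocycles and any fixed point of $F = 1+\sum_k \Lambda_k(F\Phi^k)$. You instead reduce everything to the single explicit identity $\Delta T(x) = \sum_j T(x)Q(x)^j\otimes t_j$, prove it by showing both sides satisfy a coproduct recursion (obtained from the combinatorial DSE plus the 1-cocycle property) whose solution is determined uniquely degree by degree, and derive the $\Pi$-identity from the $Y$-identity by raising to the power $s$ under your substitution morphism $S\mapsto\sum_j Q^j\otimes[x^j]S$. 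The computational core is the same in both treatments---the cocycle identity, the key cancellation $x^kT^{1+sk}=TQ^k$, and a coproduct-of-powers formula (your hat map plays the role of \cref{thm:general rio coproduct})---but the architectures differ, and each buys something. The paper's route buys reusability: \cref{thm:cocycle fdb implies rio} is reused essentially verbatim, via \cref{thm:cocycle tensor fdb implies rio}, to prove the multiple-insertion-place generalization in \cref{subsec rge multiple}, which is the genuinely new result of that section. Your route buys directness: it proves the well-known coproduct formula for $T(x)$ head-on, which is essentially how the older literature cited in \cref{rem:nothing new under the delta} proceeds, and then packages it as the Riordan statement. One point you should make explicit: the ``matching terms'' step needs powers of the \emph{candidate}, namely $X^{1+sk}=\sum_m T^{1+sk}Q^m\otimes[x^m]T^{1+sk}$, before the coproduct identity itself is known; this does follow from the algebra-morphism property (and $x$-adic continuity) of your hat map applied to $X = (T\otimes 1)\widehat{T}$, but it is a use of that map beyond the $\Pi$-reduction where you introduced it. Your handling of the RGE consequence, composing with the universal bialgebra morphism $\CK_{\Natp}\to\KK[L]$ and invoking \cref{thm:rio pde}, spells out what the paper leaves implicit and is fine.
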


\begin{remark} \label{rem:nothing new under the delta}
    While the \textit{phrasing} of \cref{thm:dse single rio} seems to be new, its content is not:
    the coproduct formula for $T(x)$ implied by combining this result with \cref{eqn:rio coproduct}
    is well-known. (See \cite[Lemma 4.6]{yeats:phd} for exactly this formula and for instance
    \cite[Theorem 1]{borinsky14}, \cite[Proposition 4.2]{prinz22}, and \cite[Proposition
    7]{vansuij09} for essentially equivalent formulas appearing in slightly different contexts.)
    Our proof is in some sense also the same as what had appeared before, but we believe this presentation is more conceptually clear.  The
    generalization to distinguished insertion places in \cref{subsec rge multiple} is new.
\end{remark}

We now work towards proving \cref{thm:dse single rio}. It will be convenient to abuse notation here by neglecting to notate the obvious (but non-injective!) map from tensor products of power series to power series with tensor coefficients.  In effect we want
to treat $x$ as though it were a scalar, in line with our policy of always applying operators
coefficientwise.
With this in mind, we can rewrite \cref{eqn:fdb
coproduct} and \cref{eqn:rio coproduct} simply as
\[
    \Delta \Pi(x) = \sum_{j \ge 0} \Pi(x)^{j+1} \otimes \pi_j
\]
and
\[
    \Delta Y(x) = \sum_{j \ge 0} Y(x)\Pi(x)^j \otimes y_j.
\]
Our first lemma is a common generalization of both formulas.

\begin{lemma} \label{thm:general rio coproduct}
    For any $s \in \KK$ and $k \in \Nat$,
    \[
        \Delta\big(Y(x)^s\Pi(x)^k\big) = \sum_{j \ge 0} Y(x)^s\Pi(x)^j \otimes
        [x^j]Y(x)^s\Pi(x)^k.
    \]
\end{lemma}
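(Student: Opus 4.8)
The plan is to use that $\Delta$ is an algebra morphism, so that the computation reduces to rearranging $(\Delta Y(x))^s(\Delta\Pi(x))^k$ using the two reformulated coproduct formulas recorded just above the statement (coming from \cref{eqn:fdb coproduct} and \cref{eqn:rio coproduct}). First I would note that $Y(x)$ has constant term $1$ and $\Pi(x)$ has order $1$, so $Y(x)^s$ (defined by the binomial series) and $\Pi(x)^k$ are well-defined power series, and that the graded algebra morphism $\Delta$ commutes coefficientwise in $x$ with forming these powers. This yields $\Delta\big(Y(x)^s\Pi(x)^k\big)=(\Delta Y(x))^s(\Delta\Pi(x))^k$, and the whole problem becomes one of identifying the right-hand side.

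The key step is to recognize the two coproduct formulas as substitution identities. Setting $t=\Pi(x)\otimes 1$ and introducing the series $\Pi^{(r)}(t)=\sum_{n\ge 0}(1\otimes\pi_n)t^{n+1}$ and $Y^{(r)}(t)=\sum_{n\ge 0}(1\otimes y_n)t^n$, which are simply the images of $\Pi$ and $Y$ under the coefficientwise algebra map $a\mapsto 1\otimes a$, a direct check gives $\Delta\Pi(x)=\Pi^{(r)}(t)$ and $\Delta Y(x)=(Y(x)\otimes 1)\,Y^{(r)}(t)$. Consequently
\[
\Delta\big(Y(x)^s\Pi(x)^k\big)=(Y(x)^s\otimes 1)\cdot Y^{(r)}(t)^s\,\Pi^{(r)}(t)^k.
\]

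To finish I would exploit that $a\mapsto 1\otimes a$ is an algebra map commuting with products and with the binomial power operation, so $Y^{(r)}(t)^s\Pi^{(r)}(t)^k$ is exactly the image of $Y(t)^s\Pi(t)^k$ under this map; hence its coefficient of $t^j$ is $1\otimes[x^j]Y(x)^s\Pi(x)^k$. Substituting back $t^j=\Pi(x)^j\otimes 1$ and multiplying through by $Y(x)^s\otimes 1$ collapses the expression to $\sum_{j\ge 0}Y(x)^s\Pi(x)^j\otimes[x^j]Y(x)^s\Pi(x)^k$, as claimed. This also transparently recovers the two displayed formulas as the special cases $(s,k)=(0,1)$ and $(s,k)=(1,0)$.

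The point requiring the most care is the justification of these formal manipulations: since $x$ is a genuine formal variable, the substitution $t=\Pi(x)\otimes 1$ must be read as an $x$-adically convergent operation, which is legitimate because $\Pi(x)=x+\cdots$ has positive order so that $\Pi(x)^n\to 0$. Likewise the noninteger exponent $s$ forces me to verify that $\Delta$ commutes coefficientwise with the binomial expansion $Y^s=\sum_n\binom{s}{n}(Y-1)^n$; this is where I expect the only real (if routine) subtlety to lie, and it is handled by the fact that $\Delta$ is a graded algebra morphism together with $Y(x)-1=O(x)$, so each coefficient of $x^m$ involves only a finite, $\Delta$-respecting polynomial in the generators.
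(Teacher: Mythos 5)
Your proof is correct, but it takes a genuinely different route from the paper's. The paper's argument is a two-step elementary one: first it observes that both sides have coefficients that are polynomial in $s$, so it suffices to treat $s \in \Nat$; then for natural $s$ it simply multiplies out $(\Delta Y(x))^s(\Delta \Pi(x))^k$ using the two reformulated coproduct formulas and recognizes the inner convolution sum $\sum y_{i_1}\cdots y_{i_s}\pi_{j_1}\cdots\pi_{j_k}$ as $[x^j]Y(x)^s\Pi(x)^k$. You instead handle arbitrary $s \in \KK$ uniformly, by packaging the coproduct formulas as the substitution identity $\Delta\Pi(x) = \Pi^{(r)}(t)$, $\Delta Y(x) = (Y(x)\otimes 1)Y^{(r)}(t)$ with $t = \Pi(x)\otimes 1$, and then pushing the identification of coefficients through the algebra map $a \mapsto 1 \otimes a$. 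What your approach buys is conceptual transparency: it exhibits the lemma as an instance of the principle that the Riordan coproduct is dual to the group law (compare \cref{thm:rio convolution}), and it explains structurally why the right tensor factor consists of coefficients of the \emph{same} series. What it costs is a heavier load of formal power series bookkeeping: besides the two points you flag, you also silently use $(AB)^s = A^sB^s$ for commuting series with constant term $1$ (in the step $(\Delta Y(x))^s = (Y(x)^s \otimes 1)\,Y^{(r)}(t)^s$) and that coefficientwise algebra maps commute with binomial powers. These are standard in characteristic $0$ (via $\exp$ and $\log$, or indeed via the very polynomiality-in-$s$ trick the paper uses), so there is no circularity or gap, but the elementary content ends up being comparable --- just relocated into general facts about formal power series rather than carried out by hand. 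The paper's reduction to $s \in \Nat$ is the shorter path; yours is the more structural one.
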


(Note that since $\Pi(x)$ has zero constant term, we can raise it only to nonnegative integer powers
if we want to stay in the realm of power series.)

\begin{proof}
    Both sides are power series with coefficients that are polynomials in $s$, so it is sufficient
    to prove the case $s \in \Nat$. Then by the coproduct formulas we can write
    \begin{align*}
        \Delta\big(Y(x)^s\Pi(x)^k\big)
        &= \sum_{i_1, \dots, i_s, j_1, \dots, j_k} Y(x)^s \Pi(x)^{i_1 + \dots + i_s + j_1 + \dots +
        j_k + k} \otimes y_{i_1} \cdots y_{i_s} \pi_{j_1} \cdots \pi_{j_k} \\
        &= \sum_{j \ge 0} Y(x)^s \Pi(x)^j \otimes \sum_{i_1 + \dots + i_s + j_1 + \dots + j_k + k =
            j} y_{i_1} \cdots y_{i_s} \pi_{j_1} \cdots \pi_{j_k} \\
        &= \sum_{j \ge 0} Y(x)^s \Pi(x)^j \otimes [x^j]Y(x)^s \Pi(x)^k
    \end{align*}
    as desired.
\end{proof}

For $n \ge 0$, let $\FdB^{(n)}$ denote the subalgebra of $\FdB$ generated by $\pi_1, \dots,
\pi_{n-1}$ (this should not be confused with the graded piece $\FdB_n$) and let $\RioHopf^{(n)}$
denote the subalgebra of $\RioHopf$ generated by $\pi_1, \dots, \pi_{n-1}$ and $y_1, \dots, y_n$.
From the coproduct formulas it is clear that  are in fact sub-bialgebras. The following result is
new as stated but encapsulates the main calculation used in standard proofs of \cref{thm:dse single
rio}.

\begin{lemma} \label{thm:cocycle fdb implies rio}
    Suppose $H$ is a bialgebra, $\phi\colon \RioHopf \to H$ is an algebra morphism, and
    $\{\Lambda_k\}_{k \in \Natp}$ is a family of 1-cocycles on $H$. Let $\Phi(x) = \phi(\Pi(x))$ and
    suppose $\phi(Y(x)) = F(x)$ where $F(x)$ is the unique solution to
    \begin{equation} \label{eqn:fake dse with q}
        F(x) = 1 + \sum_{k \ge 1} \Lambda_k(F(x)\Phi(x)^k).
    \end{equation}
    Then for $n \ge 0$, if $\phi$ is a bialgebra morphism when restricted to $\FdB^{(n)}$, it is
    also a bialgebra morphism when restricted to $\RioHopf^{(n)}$.
\end{lemma}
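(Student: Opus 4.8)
The plan is to check that $\phi$ commutes with the coproduct on a set of algebra generators of $\RioHopf^{(n)}$. Since $\RioHopf^{(n)}$ is generated by $\pi_1,\dots,\pi_{n-1}$ and $y_1,\dots,y_n$ and is a sub-bialgebra, and since $\phi$, $\Delta_H\phi$ and $(\phi\otimes\phi)\Delta$ are all algebra morphisms, it suffices to verify $\Delta_H\phi(g)=(\phi\otimes\phi)\Delta g$ on these generators. For the $\pi$'s this is exactly the hypothesis that $\phi|_{\FdB^{(n)}}$ is a bialgebra morphism, so only the $y$'s remain. Packaging them into $Y(x)$ and using \cref{eqn:rio coproduct}, the condition on $y_1,\dots,y_n$ becomes the single congruence $\Delta_H F(x)\equiv\sum_{j\ge0}F(x)\Phi(x)^j\otimes\phi(y_j)\pmod{x^{n+1}}$. (The counit condition is immediate: applying $\counit_H$ to \cref{eqn:fake dse with q} and using \cref{thm:cocycle counit} gives $\counit_H F(x)=1$, so $\counit_H\phi(y_m)=0$ for $m\ge1$.)

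I would first derive a fixed-point recursion for the left-hand side $L:=\Delta_H F(x)$. Applying $\Delta_H$ to \cref{eqn:fake dse with q}, using that $\Delta_H$ is an algebra morphism and that each $\Lambda_k$ satisfies the cocycle identity $\Delta_H\Lambda_k=\Lambda_k\otimes1+(\id\otimes\Lambda_k)\Delta_H$, and collecting the $\Lambda_k\otimes1$ terms back into $F(x)-1$ via the DSE, yields
\[ L = F(x)\otimes1+\sum_{k\ge1}(\id\otimes\Lambda_k)\big[L\cdot(\Delta_H\Phi(x))^k\big]. \]
Because $\Delta_H\Phi(x)=x(1\otimes1)+O(x^2)$ has $x$-order $1$, the factor $(\Delta_H\Phi(x))^k$ has $x$-order $\ge k$, so this recursion determines $L$ coefficient by coefficient: $[x^m]L$ depends only on $[x^{m'}]L$ with $m'<m$.

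The crux is to show that the candidate right-hand side $R:=\sum_{j\ge0}F(x)\Phi(x)^j\otimes\phi(y_j)$ satisfies the \emph{same} recursion, with $\Delta_H\Phi$ replaced by the Fa\`a di Bruno coproduct $\widetilde{\Delta}\Phi:=\sum_{m\ge0}\Phi(x)^{m+1}\otimes\phi(\pi_m)$. Writing $R=(F(x)\otimes1)S$ with $S:=\sum_{j\ge0}\Phi(x)^j\otimes\phi(y_j)$, and noting that $F(x)\otimes1$ can be pulled out of each $\id\otimes\Lambda_k$, this reduces to $S=1\otimes1+\sum_{k\ge1}(\id\otimes\Lambda_k)\big[S\cdot(\widetilde{\Delta}\Phi)^k\big]$. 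I would prove this by a substitution trick: regard \cref{eqn:fake dse with q} as an identity in $H[[z]]$ in a fresh variable $z$, and apply the continuous algebra homomorphism $\Theta\colon H[[z]]\to(H\otimes H)[[x]]$ determined by $h\mapsto1\otimes h$ for $h\in H$ and $z\mapsto\Phi(x)\otimes1$. Since each $\Lambda_k$ acts on $H$-coefficients, $\Theta$ intertwines $\Lambda_k$ with $\id\otimes\Lambda_k$; moreover $\Theta(F)=S$ and $\Theta(\Phi)=\widetilde{\Delta}\Phi$, so applying $\Theta$ to the DSE produces exactly the recursion for $S$. This substitution step is the main thing to get right, as it is what converts the relation defining $F$ into the desired coproduct identity.

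Finally I would conclude by comparing the two recursions. The hypothesis that $\phi|_{\FdB^{(n)}}$ is a bialgebra morphism says precisely that $\Delta_H\phi(\pi_m)=(\phi\otimes\phi)\Delta\pi_m$ for $m\le n-1$, which coefficientwise gives $\Delta_H\Phi(x)\equiv\widetilde{\Delta}\Phi(x)\pmod{x^{n+1}}$. Since in each recursion the operator $\id\otimes\Lambda_k$ is applied to a factor of $x$-order $\ge k\ge 1$, replacing $\Delta_H\Phi$ by a series congruent to it modulo $x^{n+1}$ does not change the recursion modulo $x^{n+1}$; thus modulo $x^{n+1}$ both $L$ and $R$ solve one and the same recursion with forcing term $F(x)\otimes1$. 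Their difference $D=L-R$ would then satisfy a homogeneous recursion whose right-hand side has strictly larger $x$-order than the lowest nonzero term of $D$, forcing $D\equiv0\pmod{x^{n+1}}$. Hence $L\equiv R\pmod{x^{n+1}}$, which is exactly the required coproduct condition on $y_1,\dots,y_n$, completing the proof.
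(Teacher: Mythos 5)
Your proof is correct, and it takes a genuinely different route from the paper's. The paper proves the lemma by induction on $n$: assuming the bialgebra property on $\RioHopf^{(n-1)}$ together with preservation of the coproduct of $\pi_{n-1}$, it extracts the coefficient of $x^n$ from \cref{eqn:fake dse with q}, applies the cocycle identity, and identifies the resulting expression with $(\phi\otimes\phi)(\Delta y_n)$ by means of the product-coproduct formula of \cref{thm:general rio coproduct}, which is applicable because $[x^n]Y(x)\Pi(x)^k$ for $k\ge1$ involves only generators already under control. You instead package the entire claim into the single congruence $L\equiv R\pmod{x^{n+1}}$ in $(H\otimes H)[[x]]$, derive \emph{exact} fixed-point equations for both sides --- for $L=\Delta_H F(x)$ by applying $\Delta_H$ and the cocycle identity to the DSE, and for the candidate $R$ by pushing the DSE through your substitution homomorphism $\Theta$ --- and finish with an $x$-adic contraction argument in which the hypothesis on $\FdB^{(n)}$ enters only through the congruence $\Delta_H\Phi\equiv(\phi\otimes\phi)\Delta\Pi(x)\pmod{x^{n+1}}$. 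The individual steps check out: the intertwining $\Theta\Lambda_k=(\id\otimes\Lambda_k)\Theta$ holds because $\Theta$ places coefficients in the second tensor leg and the variable in the first (and $\Phi(x)\otimes 1$ commutes with $1\otimes h$, so no commutativity of $H$ is needed), while the order estimate that $(\Delta_H\Phi)^k$ and $((\phi\otimes\phi)\Delta\Pi(x))^k$ differ by a series of order at least $n+k$ justifies swapping them inside $\id\otimes\Lambda_k$, after which the minimal-nonzero-coefficient argument closes the loop. What the paper's route buys is brevity: \cref{thm:general rio coproduct} is needed elsewhere anyway (e.g.\ for \cref{thm:dse single rio}), and its induction does all the bookkeeping in one pass. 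What your route buys is logical economy: you never invoke preservation of the coproducts of the lower $y_j$ as an induction hypothesis, relying only on the generator formulas \cref{eqn:fdb coproduct} and \cref{eqn:rio coproduct}, so the proof isolates exactly where the $\FdB^{(n)}$ hypothesis is used; you also verify the counit condition explicitly, which the paper leaves tacit.
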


Recall that by definition $\Pi(x)$ and hence also $\Phi(x)$ has zero constant term, so only the
terms with $k \le n$ on the right side of \cref{eqn:fake dse with q} can contribute to the
coefficient of $x^n$. Thus the equation really does have a unique solution.

\begin{proof}
    Since we are given that $\phi$ is an algebra morphism we must only prove it preserves the
    coproducts of the generators. We prove this by induction on $n$. In the base case, $\FdB^{(0)} =
    \RioHopf^{(0)} = \KK$ so there is nothing to prove. Now suppose that $n > 0$ and that $\phi$ is
    a bialgebra morphism when restricted to $\RioHopf^{(n-1)}$ and also preserves the coproduct of
    $\pi_{n-1}$. Then we must show it preserves the coproduct of $y_n$. Note that when $k > 1$, the
    coefficient $[x^n]Y(x)\Pi(x)^k$ does not contain $y_n$, so its coproduct agrees with the formula
    from \cref{thm:general rio coproduct}. Thus
    \begin{align*}
        \Delta \phi(y_n)
        &= \Delta([x^n]F(x)) \\
        &= \Delta\left(\sum_{k \ge 1} \Lambda_k\left([x^n]F(x)\Phi(x)^k\right)\right) \\
        &= \left(\Lambda_k \otimes 1 + (\id \otimes \Lambda_k)\Delta\right)\left(\sum_{k \ge 1}
        [x^n]F(x)\Phi(x)^k\right) \\
        &= [x^n]F(x) \otimes 1 + \sum_{k \ge 1} \sum_{j=0}^{n-k} [x^n] F(x)\Phi(x)^j \otimes
        [x^j] \Lambda_k(F(x)\Phi(x)^k) \\
        &= [x^n]F(x) \otimes 1 + \sum_{j=0}^{n-1} [x^n]F(x)\Phi(x)^j \otimes [x^j] \left(\sum_{k \ge
        1} \Lambda_k(F(x)\Phi(x)^k)\right) \\
        &= \sum_{j=0}^{n} [x^n]F(x)\Phi(x)^j \otimes [x^j]F(x) \\
        &= (\phi \otimes \phi)(\Delta y_n). \qedhere
    \end{align*}
\end{proof}

\begin{remark}
    An obvious consequence of \cref{thm:cocycle fdb implies rio} is that if $\phi$ is already known
    to be a bialgebra morphism when restricted to $\FdB$ then it is a bialgebra morphism on all of
    $\RioHopf$. This is not quite the right version of the statement for the application to
    Dyson--Schwinger equations, but it does give some interesting examples of series satisfying
    renormalization group equations. For instance, consider the map $\FdB \to \CK$ given by $\pi_n
    \mapsto \ell_1^n$. (Recall that $\ell_n$ is the $n$-vertex ladder so in particular $\ell_1$ is
    the unique one-vertex tree.) It is a straightforward exercise to show that this is in fact a
    bialgebra morphism. Thus we can extend this map to $\RioHopf$ by sending $Y(x)$ to the series
    $T(x)$ defined by
    \[
        T(x) = 1 + xB_+\left(\frac{T(x)}{1-\ell_1x}\right),
    \]
    an example due to Dugan \cite{dugan:masters} of a series not coming from a Dyson--Schwinger
    equation which nonetheless satisfies a renormalization group equation after applying a bialgebra
    morphism $\CK \to \KK[L]$. In the spirit of \cref{example:positive s,example:s = -1}, we can
    think of $T(x)$ as a generating function for plane trees with the property that one obtains a
    ladder after deleting all leaves.
\end{remark}

We can now prove \cref{thm:dse single rio}.

\begin{proof}[Proof of \cref{thm:dse single rio}]
    We prove by induction on $n$ that $\phi$ is a bialgebra morphism on $\RioHopf^{(n)}$. For $n =
    0$ this is trivial. Now suppose $n > 0$ and that $\phi$ is a bialgebra morphism on
    $\RioHopf^{(n-1)}$. In particular, $\phi$ is a bialgebra morphism on $\FdB^{(n-1)}$, and we
    observe that since $[x^{n-1}]T(x)^s \in \phi(\RioHopf^{(n-1)})$, by \cref{thm:general rio
    coproduct} we have
    \begin{align*}
        \Delta \phi(\pi_{n-1})
        &= \Delta([x^{n-1}]T(x)^s) \\
        &= \sum_{j \ge 0} [x^{n-1}] T(x)^s (xT(x)^s)^j \otimes [x^j]T(x)^s \\
        &= \sum_{j \ge 0} [x^{n-1}] (xT(x)^s)^{j+1} \otimes [x^j]T(x)^s \\
        &= \sum_{j \ge 0} [x^{n-1}] \phi(\Pi(x))^{j+1} \otimes \phi(\pi_j) \\
        &= (\phi \otimes \phi)(\Delta \pi_{n-1})
    \end{align*}
    so $\phi$ is a bialgebra morphism on $\FdB^{(n)}$. By \cref{thm:cocycle fdb implies rio}, $\phi$
    is thus a bialgebra morphism on $\RioHopf^{(n)}$ as wanted. The renormalization group equation
    then follows from \cref{thm:rio pde}.
\end{proof}

Now we consider systems. The idea is the same, that we would like to write each equation of the
system in a form that looks like \cref{eqn:fake dse with q}. In general this will only work if we
have the same invariant charge for each equation. In terms of the setup, for
$p \in P_i$ we want a linear relation
\[
    \mu_p = 1_i + w_p\mathbf s
\]
for some $\mathbf s = (s_i)_{i \in I} \in \KK^I$. As in the single-equation case, we may as well
combine terms together to write the system in the form
\begin{equation} \label{eqn:dse system with q}
    G_i(x, L) = 1 + \sum_{k \ge 1} x^k\Lambda_{i,k}\left(G_i(x,L)\prod_j G_j(x, L)^{s_jk}\right).
\end{equation}
The corresponding combinatorial system then looks like
\begin{equation} \label{eqn:cdse system with q}
    T_i(x) = 1 + \sum_{k \ge 1} B_+^{(i, k)}(T_i(x)Q(x)^k)
\end{equation}
where
\begin{equation} \label{eqn:definition of q}
    Q(x) = x \prod_{i \in I} T_i(x)^{s_i}.
\end{equation}
We then have the following generalization of \cref{thm:dse single rio}. (Note that most of the
papers referenced in \cref{rem:nothing new under the delta} are actually for this version already.)

\begin{theorem} \label{thm:dse system rio}
    Let $\mathbf T(x) \in \CK_{I \times \Natp}[[x]]^I$ be the solution to the combinatorial
    Dyson--Schwinger system \cref{eqn:cdse system with q}. Then for any $i \in I$, the map
    $\phi_i\colon \RioHopf \to \CK_{I \times \Natp}$ defined by $\phi_i(Y(x)) = T_i(x)$ and
    $\phi_i(\Pi(x)) = Q(x)$ is a bialgebra morphism. As a consequence, the solution $\mathbf G(x, L)$
    to the corresponding Dyson--Schwinger system \cref{eqn:dse system with q} satisfies the
    renormalization group equations
    \[
        \left(\frac{\partial}{\partial L} - \beta(x) \frac{\partial}{\partial x} -
        \gamma_i(x)\right) G_i(x, L) = 0
    \]
    where $\gamma_i(x)$ is the linear term in $L$ of $G_i(x, L)$ and
    \[
        \beta(x) = \sum_{i \in I} s_ix\gamma(x).
    \]
\end{theorem}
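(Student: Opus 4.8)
The plan is to reduce everything to the single-equation machinery already developed, exploiting the fact that the $\Pi$-component of every map $\phi_i$ is the \emph{same} series $Q(x)$. Since $\phi_i(\Pi(x)) = Q(x)$ for all $i$, the restrictions $\phi_i|_{\FdB}$ coincide with a single algebra morphism $\psi\colon \FdB \to \CK_{I \times \Natp}$ sending $\Pi(x) \mapsto Q(x)$. Moreover, reading \cref{eqn:cdse system with q} as
\[
    T_i(x) = 1 + \sum_{k \ge 1} B_+^{(i,k)}\big(T_i(x) Q(x)^k\big)
\]
exhibits each $T_i(x)$ as the solution of a fake Dyson--Schwinger equation of the form in \cref{thm:cocycle fdb implies rio}, with $H = \CK_{I \times \Natp}$, $\Phi(x) = Q(x)$, and cocycles $\Lambda_k = B_+^{(i,k)}$. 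Thus \cref{thm:cocycle fdb implies rio} guarantees that, for each $i$, once $\phi_i$ is a bialgebra morphism on $\FdB^{(n)}$ it is automatically one on $\RioHopf^{(n)}$. The whole problem therefore collapses to showing that the common map $\psi$ is a bialgebra morphism on $\FdB$.

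I would prove this by induction on $n$, establishing simultaneously that every $\phi_i$ is a bialgebra morphism on $\RioHopf^{(n)}$. The case $n=0$ is trivial. Assuming the claim for $n-1$, it suffices to check that $\psi$ preserves the coproduct of $\pi_{n-1}$, that is, of $\psi(\pi_{n-1}) = [x^n]Q(x)$; \cref{thm:cocycle fdb implies rio} then upgrades each $\phi_i$ to a bialgebra morphism on $\RioHopf^{(n)}$, closing the loop. The key point making this non-circular is that, because $Q(x) = x\prod_j T_j(x)^{s_j}$ carries an explicit factor of $x$, the coefficient $[x^n]Q(x) = [x^{n-1}]\prod_j T_j(x)^{s_j}$ depends only on the coefficients $[x^m]T_j$ with $m \le n-1$, all of which lie in $\phi_j(\RioHopf^{(n-1)})$ and whose coproducts are hence controlled by the inductive hypothesis.

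The heart of the argument, and the genuinely new ingredient beyond the single-equation case of \cref{thm:dse single rio} (there $Q = xT^s = \phi(\Pi)$ is the image of a single element, whereas here $Q$ couples all the $T_j$), is the coproduct computation for $Q(x)$. Applying \cref{thm:general rio coproduct} to each $Y(x)^{s_j}$ and pushing it through $\phi_j$ (valid to the orders licensed by the previous paragraph) gives $\Delta\big(T_j(x)^{s_j}\big) = \sum_{l \ge 0} T_j(x)^{s_j} Q(x)^l \otimes [x^l]T_j(x)^{s_j}$. Since $\Delta$ is multiplicative and $Q(x) = x\prod_j T_j(x)^{s_j}$, multiplying these out componentwise in $\CK_{I \times \Natp} \otimes \CK_{I \times \Natp}$ and collecting by $m = \sum_j l_j$ yields
\[
    \Delta Q(x) = x \sum_{m \ge 0} \Big(\prod_j T_j(x)^{s_j}\Big) Q(x)^m \otimes [x^m]\prod_j T_j(x)^{s_j} = \sum_{m \ge 1} Q(x)^m \otimes [x^m]Q(x),
\]
where the collapse uses $x\prod_j T_j(x)^{s_j} = Q(x)$ in the left slot (turning the factor into $Q(x)^{m+1}$) and $[x^m]\prod_j T_j(x)^{s_j} = [x^{m+1}]Q(x)$ in the right. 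Extracting the $[x^n]$-coefficient is exactly the assertion that $\psi$ preserves $\Delta\pi_{n-1}$. This multi-factor manipulation, together with the degree bookkeeping that keeps the induction well-founded, is the main obstacle; everything else is formal.

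Finally, the renormalization group equations follow by transport along the Feynman rules. The $\Lambda_{i,k}$ are $1$-cocycles on $\KK[L]$, so \cref{thm:ck universal decorated} produces a bialgebra morphism $\rho\colon \CK_{I \times \Natp} \to \KK[L]$ with $\rho B_+^{(i,k)} = \Lambda_{i,k}\rho$; applying $\rho$ to \cref{eqn:cdse system with q} and invoking uniqueness of solutions to \cref{eqn:dse system with q} shows $\rho(T_i(x)) = G_i(x,L)$, whence $\rho(Q(x)) = x\prod_j G_j(x,L)^{s_j}$. The composite $\rho \circ \phi_i\colon \RioHopf \to \KK[L]$ is then a bialgebra morphism sending $Y(x) \mapsto G_i(x,L)$ and $\Pi(x) \mapsto \rho(Q(x))$, so \cref{thm:rio pde} delivers the renormalization group equation for $G_i$ with $\beta(x)$ the linear-in-$L$ term of $\rho(Q(x))$. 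Writing $G_j(x,L) = 1 + \gamma_j(x)L + O(L^2)$ and extracting that term gives $\beta(x) = x\sum_j s_j\gamma_j(x) = \sum_{i \in I} s_i x\gamma_i(x)$, matching the stated formula.
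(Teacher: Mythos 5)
Your proposal is correct and follows essentially the same route as the paper: induction on $n$ over the filtration $\RioHopf^{(n)}$, the coproduct computation for $Q(x)$ via \cref{thm:general rio coproduct} applied factorwise to the $T_j(x)^{s_j}$, the upgrade from $\FdB^{(n)}$ to $\RioHopf^{(n)}$ via \cref{thm:cocycle fdb implies rio}, and the renormalization group equation via \cref{thm:rio pde}. The only differences are expository: you make explicit two points the paper leaves implicit, namely the degree bookkeeping that keeps the induction well-founded (the factor of $x$ in $Q$) and the composition with the Feynman-rules morphism $\rho\colon \CK_{I\times\Natp}\to\KK[L]$ that transports the result to $G_i(x,L)$.
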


\begin{proof}
    We prove by
    induction on $n$ that $\phi_i$ is a bialgebra morphism on $\RioHopf^{(n)}$ for all $i$.
    Supposing they are all bialgebra morphisms on $\RioHopf^{(n-1)}$. Then, as in the proof of
    \cref{thm:dse single rio}, we have
    \begin{align*}
        \Delta \phi_i(\pi_{n-1})
        &= \Delta([x^{n-1}] Q(x)) \\
        &= [x^{n-1}] \prod_{i \in I} \Delta(T_i(x)^{s_i}) \\
        &= [x^{n-1}] \sum_{\alpha \in \Nat^I} \prod_{i \in I} \left(T_i(x)^{s_i}Q(x)^{\alpha_i} \otimes
        [x^{\alpha_i}] T_i(x)^{s_i}\right) \\
        &= \sum_{\alpha \in \Nat^I} [x^n] Q(x)^{|\alpha| + 1} \otimes \prod_{i \in I}
        [x^{\alpha_i}]T_i(x)^{s_i} \\
        &= \sum_{j \ge 0} [x^n] Q(x)^{j+1} \otimes [x^{j+1}]Q(x) \\
        &= (\phi \otimes \phi)(\Delta \pi_{n-1})
    \end{align*}
    and thus $\phi_i$ is a bialgebra morphism on $\FdB^{(n)}$ and hence on $\RioHopf^{(n)}$ by
    \cref{thm:cocycle fdb implies rio}. The renormalization group equation then follows from
    \cref{thm:rio pde}.
\end{proof}

\subsection{The anomalous dimension revisited} \label{sec:equations for gamma}

Consider the Dyson--Schwinger system \cref{eqn:dse system with q}. Write the 1-cocycles in the form
given by \cref{thm:polynomial cocycles}:
\[
    \Lambda_{i,k} f(L) = \int_0^L A_{i,k}(d/du)f(u)\,du.
\]
We can then differentiate both sides of \cref{eqn:dse system with q} with respect to $L$:
\begin{equation} \label{eqn:differentiate dse}
    \frac{\partial}{\partial L} G_i(x, L) = \sum_{k \ge 1} x^k A_{i,k}(\partial/\partial L) G_i(x,
    L)\left(\prod_j G_j(x, L)^{s_j}\right)^k.
\end{equation}
For the moment writing $\Psi(x,L) =\prod_j G_j(x,L)^{s_j}$ and using \cref{thm:fdb pde}, \cref{thm:rio pde}, and \cref{thm:dse system rio}, we have that
\[
\frac{\partial \Psi(x,L)}{\partial L} = \beta(x)\frac{\partial \Psi(x,L)}{\partial x}
\]
and hence
\begin{align*}
    & \frac{\partial}{\partial L} \left(G_i(x, L) \Psi(x,L)^k\right) \\
  %& = \left(\frac{\partial}{\partial L}G_i(x,L)\right)\Psi(x,L)^k + G_i(x,L)\Psi(x,L)^{k-1}\left(\frac{\partial}{\partial L}\Psi(x,L)\right) \\
   & = \left( \left(\beta(x)\frac{\partial}{\partial x} + \gamma_i(x)\right)G_i(x,L)\right)\Psi(x,L)^k + \beta(x)G_i(x,L)\Psi(x,L)^{k-1}\left(\frac{\partial}{\partial x}\Psi(x,L)\right) \\
   & = \left(\beta(x)\frac{\partial}{\partial x} + \gamma_i(x)\right)G_i(x,L)\Psi(x,L)
\end{align*}
Putting this back into \cref{eqn:differentiate dse}, we obtain
\begin{equation} \label{eqn:combine dse with rge}
    \frac{\partial}{\partial L} G_i(x, L) = \sum_{k \ge 1}
    A_{i,k}\left(\beta(x)\frac{\partial}{\partial x} + \gamma_i(x)\right) G_i(x, L)\left(\prod_j
    G_j(x, L)^{s_j}\right)^k.
\end{equation}
If we then set $L = 0$, we obtain the following somewhat strange equation satisfied by the anomalous
dimension:
\begin{equation} \label{eqn:equation for gamma}
    \gamma_i(x) = \sum_{k \ge 1} A_{i,k}\left(\beta(x)\frac{\partial}{\partial x} +
    \gamma_i(x)\right) x^k.
\end{equation}

Two special cases are interesting. Firstly, in the case of only a single term $k = 1$,
\cref{eqn:equation for gamma} can be rewritten as a pseudo-differential equation
\begin{equation} \label{eqn:equation for gamma differential}
    A_i\left(\beta(x)\frac{\partial}{\partial x} + \gamma_i(x)\right)^{-1} \gamma_i(x) = x.
\end{equation}
This formulation is due to Balduf \cite[Equation 2.19]{balduf:subtraction} but special cases have
been known for longer \cite{bk01,yeats:phd,bellon10}. If $A_i(x)$ is the reciprocal of a polynomial,
which occurs in some physically relevant cases, \cref{eqn:equation for gamma differential} becomes
an honest differential equation. These equations have been analyzed by Borinsky, Dunne, and the
second author \cite{bdy} using the combinatorics of tubings.

The other special case is that of a \textit{linear} system, i.e. $\beta(x) = 0$. In this case
\cref{eqn:equation for gamma} becomes a functional equation
\begin{equation} \label{eqn:equation for gamma functional}
    \gamma_i(x) = \sum_{k \ge 1} x^k A_{i,k}(\gamma_i(x)).
\end{equation}
In the intersection of these two special cases, a linear equation with a single term,
\cref{eqn:equation for gamma functional} can be solved explicitly by Lagrange inversion;
see \cite[Lemma 2.32]{tubings}.

\subsection{Tubings}\label{subsec tubings}

We now introduce the combinatorial objects that we will use to understand 1-cocycles. We will only
be interested in trees, but the basic definitions can be given in the context of an arbitrary finite
poset $P$. A \textit{tube} is a connected convex subset of $P$.
For tubes $X, Y$ write $X \to Y$ if $X \cap Y = \emptyset$ and there exist $x \in X$ and $y \in Y$
such that $x < y$.  A collection $\tau$ of tubes is called a \textit{tubing} if it satisfies the
following conditions:
\begin{itemize}
    \item
        (Laminarity) If $X, Y \in \tau$ then either $X \cap Y = \emptyset$, $X \subseteq Y$, or $X
        \supseteq Y$.

    \item
        (Acyclicity) There do not exist tubes $X_1, \dots, X_k \in \tau$ with $X_1 \to X_2 \to \dots
        \to X_k \to X_1$.
\end{itemize}

Tubings of posets (also called \textit{pipings}) were introduced by Galashin \cite{galashin:tubings}
to index the vertices of a certain polytope associated to $P$, the \textit{$P$-associahedron}. They
were rediscovered (in the case of trees) by the authors of \cite{tubings} in the present context.
Note that for trees the acyclicity condition is trivial.

\begin{remark}
    Galashin defines a \textit{proper tube} to be one which is neither a singleton nor the entirety
    of $P$, and a \textit{proper tubing} to be one consisting only of proper tubes. Only the proper
    tubes and tubings play a role in the definition of the poset associahedron, but for us it will
    be sensible to include the improper ones. Note that if one restricts attention to
    \textit{maximal} tubings (which we largely will do) then this makes no combinatorial difference,
    as a maximal tubing contains \textit{all} of the improper tubes and removing them maps the set
    of maximal tubings bijectively to the set of maximal proper tubings.
\end{remark}

\begin{remark}
    Tubings of posets are only loosely related to the better-known notion of tubings of graphs
    introduced by Carr and Devadoss \cite{carr-devadoss}. For graphs, a \textit{tube} is defined to
    be a set of vertices which induces a connected subgraph and a \textit{tubing} is a set of tubes
    satisfying the laminarity condition along with a certain \textit{non-adjacency} condition which
    is entirely different from the acyclicity condition for poset tubings.  Thus the notions should
    not be confused. However, in the case of trees there is a close connection: tubings of a rooted
    tree (as a poset) are in bijection with tubings of the \textit{line graph} of the tree. (This is
    essentially a special case of a result of Ward \cite[Lemma 3.17]{ward21} which is stated in
    terms of related objects called \textit{nestings}. See \cite[Section 6]{tubings} for a
    discussion of this in our language.)
\end{remark}

\begin{remark}
    A subset of a rooted tree is convex and connected (in the order-theoretic sense) if and only if
    it is connected in the graph-theoretic sense. Thus the set of tubings of a rooted tree is really
    an invariant of the underlying unrooted tree. However, the statistics on tubings in which we
    will be interested do depend on the root and are best thought of in terms of the partial
    ordering.
\end{remark}

The laminarity condition implies that if $\tau$ is a tubing of $P$, the poset of tubes ordered by
inclusion is a forest. In the case of a maximal tubing of a connected poset, there is a unique
maximal tube (namely $P$ itself) and each non-singleton tube has exactly two maximal tubes properly
contained within it. Relative to $X$, one of these tubes is a downset and one is an upset. Taking
the downset as the left child and the upset as the right child, the tubes of a maximal tubing thus
have the structure of a binary plane tree. For this reason (and to avoid confusion with graph
tubings), maximal tubings of rooted trees were referred to as \textit{binary tubings} in
\cite{tubings} and we will also use this language.

\begin{figure}
    \begin{center}
        \subcaptionbox[]{\label{fig:tubing example small}All binary tubings of a small
        tree.}{
            \includegraphics[page=4,width=0.6\textwidth]{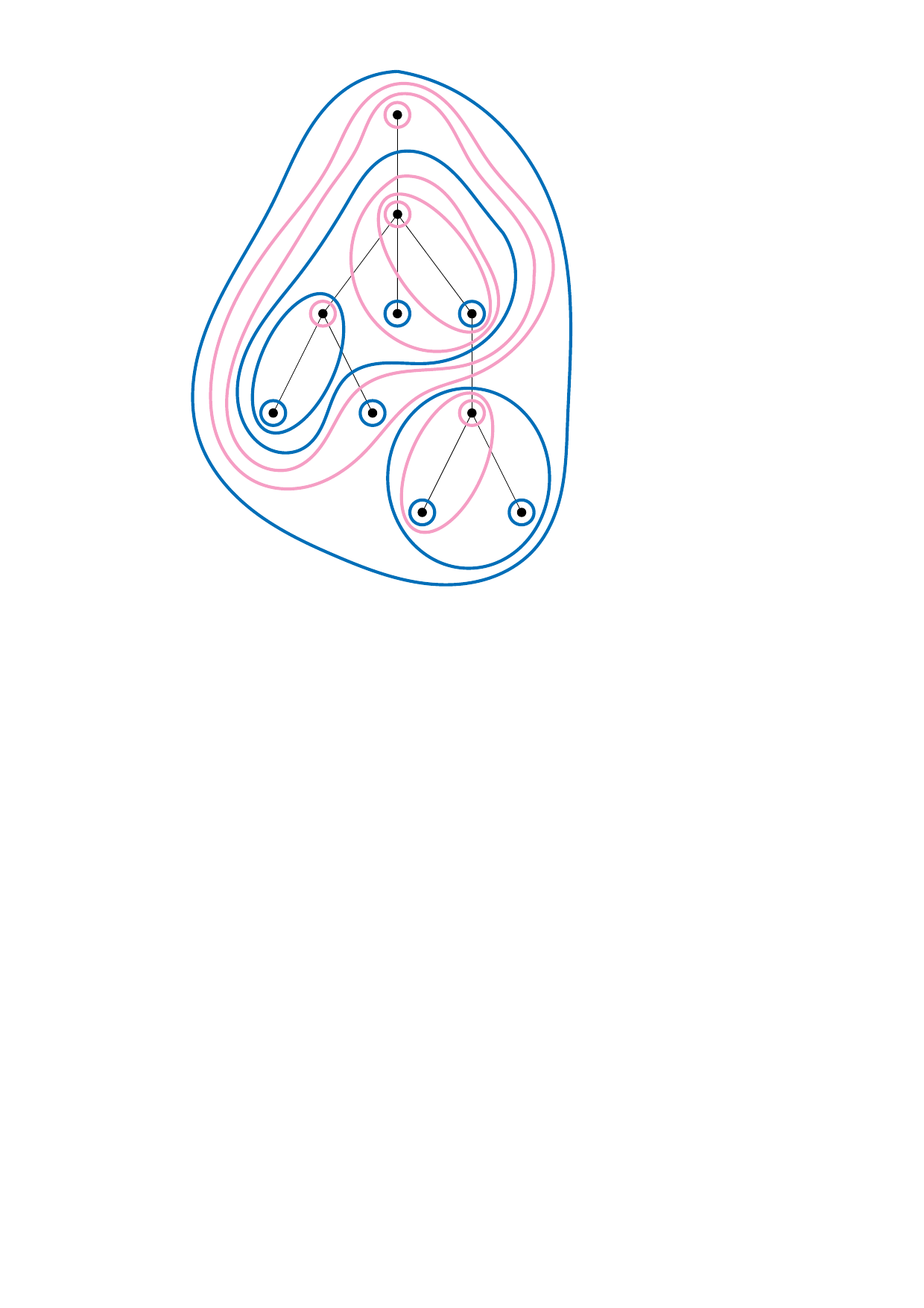}
        }
        \subcaptionbox[]{\label{fig:tubing example large}A binary tubing of a larger
        tree.}{
            \includegraphics[page=2,width=0.3\textwidth]{tubes}
        }
    \end{center}
    \caption{Examples of binary tubings. Upper and lower tubes highlighted in different colours.}
\end{figure}

\label{symboldef:tub}\label{symboldef:rank}\label{symboldef:bstat}Henceforth we shall restrict our
attention exclusively to binary tubings of rooted trees. We will write $\Tub(t)$ for the set of
binary tubings of $t$. We will call a tube a \textit{lower tube} (resp. \textit{upper tube}) if it
is a downset (resp. upset) in its parent in the tree of tubes. We will also consider $t$ itself to
be a lower tube; this ensures that each vertex is the root of exactly one lower tube. Given a vertex
$v$, define the \textit{rank} $\rank(\tau, v)$ of $v$ in $\tau$ to be the number of upper tubes
rooted at $v$.\footnote{In \cite{tubings} the equivalent statistic $b(\tau, v) = \rank(\tau, v) + 1$
counting the total number of tubes rooted at $v$ was used instead. It has since become clear that
the rank is really the fundamental quantity.} We will write $b(\tau)$ for the number of tubes of
$\tau$ containing the root of $t$; note that we clearly have $b(\tau) = \rank(\tau, \rt t) + 1$.

\begin{remark} \label{rem:rank in terms of lower}
    Our definition of the rank refers only to the upper tubes, but it can be equivalently defined in
    terms of \textit{lower} tubes only: for each upper tube rooted at $v$ there is a corresponding
    lower tube, with the property that there is no lower tube of $\tau$ lying strictly between it
    and the unique lower tube rooted at $v$. Conversely any such lower tube corresponds to an upper
    tube rooted at $v$. In other words, considering the lower tubes of $\tau$ as a poset ordered by
    containment, $\rank(\tau, v)$ is the number of lower tubes that are covered by the unique lower
    tube rooted at $v$.
\end{remark}

Binary tubings of rooted trees have a recursive structure which we will make essential use of.

\begin{proposition} \label{thm:recursive tubing}
    Let $t$ be a rooted tree with $|t| > 1$. There is a bijection between binary tubings of $t$ and
    triples $(t', \tau', \tau'')$ where $t'$ is a proper subtree (principal downset) of $t$, $\tau'
    \in \Tub(t')$, and $\tau'' \in \Tub(t \setminus t')$. Moreover this bijection satisfies
    \[
        \rank(\tau, v) = \begin{cases}
            \rank(\tau', v) & v \in t' \\
            \rank(\tau'', v) + 1 & v = \rt t \\
            \rank(\tau'', v) & \text{otherwise}
        \end{cases}
    \]
    and $b(\tau) = b(\tau'') + 1$.
\end{proposition}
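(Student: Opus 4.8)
The plan is to make the two-child decomposition of a binary tubing explicit and then track how the rank statistic behaves under it. Since $|t| > 1$, in any binary tubing $\tau$ the whole tree $t$ is the unique maximal tube, and being non-singleton it contains exactly two maximal proper sub-tubes $A$ and $B$, one a downset of $t$ and the other an upset, as in the structural description of maximal tubings recalled before the statement. First I would check that $A$ and $B$ partition $t$: each singleton $\{v\}$ belongs to the maximal tubing and is a proper tube, so it lies in a maximal proper sub-tube, forcing $A \cup B = t$, while laminarity gives $A \cap B = \emptyset$. A short lemma then identifies the downset $A$ as a \emph{principal} downset, i.e. a genuine subtree $t'$: in a rooted tree any connected downset equals the downset of its maximal element, since the tree-path joining two of its vertices rises to their join, which lies in the downset and so cannot exceed the maximal element. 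Because $B$ is a tube it is connected with root $\rt t$, and symmetrically the complement of a principal downset is always connected, so the triple $(t', \tau', \tau'')$ is well formed.

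Next I would define the forward map $\tau \mapsto (t', \tau', \tau'')$, where $\tau'$ (resp.\ $\tau''$) collects the tubes of $\tau$ contained in $t'$ (resp.\ in $t \setminus t'$); by maximality of $A,B$ every tube of $\tau$ other than $t$ lies in one of them, so $\tau = \{t\} \sqcup \tau' \sqcup \tau''$. The inverse sends $(t', \tau', \tau'')$ to $\{t\} \cup \tau' \cup \tau''$, and I would verify this is a binary tubing: laminarity holds because tubes drawn from $\tau'$ and $\tau''$ are disjoint and $t$ contains everything, acyclicity is automatic for trees, and maximality follows since any addable proper tube $Z$ must be laminar with both $t' \in \tau'$ and $t \setminus t' \in \tau''$, forcing $Z \subseteq t'$ or $Z \subseteq t \setminus t'$ and hence $Z \in \tau'$ or $Z \in \tau''$ already. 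That the two constructions are mutually inverse is then routine, as the two maximal proper sub-tubes of the reconstructed tubing are exactly $t'$ and $t \setminus t'$.

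For the rank identity the key point is that restricting to $\tau'$ or $\tau''$ preserves the root of each tube and, for any tube properly below $t'$ or $t \setminus t'$, its parent in the tree of tubes, and hence its status as an upper or lower tube. Thus for $v \in t'$ every tube of $\tau$ rooted at $v$ already lies in $\tau'$ with the same designation (the tubes $t$ and $t \setminus t'$ are rooted at $\rt t \notin t'$), giving $\rank(\tau,v) = \rank(\tau',v)$, and the same argument against $\tau''$ handles $v \in t \setminus t'$ with $v \neq \rt t$. The one genuine subtlety is at $v = \rt t$: the tube $t \setminus t'$ is the \emph{whole} tree of $\tau''$ and so counts as a lower tube there by our convention, but inside $\tau$ its parent is $t$ and it is the upset child, hence an upper tube, while every other tube of $\tau''$ rooted at $\rt t$ keeps its designation. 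This single promotion from lower to upper supplies the extra unit, so $\rank(\tau, \rt t) = \rank(\tau'', \rt t) + 1$, and $b(\tau) = \rank(\tau,\rt t) + 1 = b(\tau'') + 1$ follows from $b(\tau'') = \rank(\tau'', \rt t) + 1$.

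I expect the main obstacle to be exactly this bookkeeping at the root: one must keep straight that the convention counting the ambient tree as a lower tube is precisely what converts the role of $t \setminus t'$ from lower (in $\tau''$) to upper (in $\tau$), and confirm that no other tube changes designation. Pinning down that upper/lower status is intrinsic to a tube together with its parent, and therefore stable under restriction to $\tau'$ and $\tau''$, is the delicate step; the remaining checks (the partition, principality of $A$, laminarity, and maximality) are short.
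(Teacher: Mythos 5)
Your proof is correct and follows essentially the same route as the paper's: decompose $\tau$ via the two maximal proper tubes (a downset $t'$ and the upset $t \setminus t'$), restrict to get $\tau'$ and $\tau''$, and observe that upper/lower status is preserved under restriction except for $t \setminus t'$, which is promoted from the (conventionally lower) whole tree of $\tau''$ to an upper tube of $\tau$, yielding the $+1$ at the root. Your write-up is somewhat more explicit than the paper's --- notably in verifying the inverse map and maximality of the reconstructed tubing, and in proving that a connected downset is principal --- but these are details the paper's proof leaves implicit rather than a different argument.
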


\begin{proof}
    By the discussion above there are two maximal tubes $t', t''$ properly contained in the largest
    tube $t$, where $t'$ is a downset and $t'' = t \setminus t'$ is an upset and both are connected.
    A connected downset in a rooted tree is a subtree; since the complement is nonempty it must be
    that $t'$ is a proper subtree. Let
    \begin{align*}
        \tau' &= \{u \in \tau\colon u \subseteq t'\}
        \intertext{and}
        \tau'' &= \{u \in \tau\colon u \cap t' = \emptyset\}.
    \end{align*}
    By the laminarity condition, we have $\tau = \{t\} \cup \tau' \cup \tau''$. Since each tube in
    $\tau'$ and $\tau''$ still contains two maximal tubes within it, these are still maximal tubings
    of $t'$ and $t''$ respectively, i.e. $\tau' \in \Tub(t')$ and $\tau'' \in \Tub(t \setminus t')$.
    Note that the upper tubes of $\tau'$ and $\tau''$ are also upper tubes of $\tau$, whereas $t''$
    is a lower tube in $\tau''$ and an upper tube in $\tau$. Thus the statement about ranks follows.
    Since $b(\tau) = \rank(\tau, \rt t) + 1$ and $\rt t \in t''$ the statement about the
    $b$-statistic also follows.
\end{proof}

\begin{remark}
    Observe that in a binary tubing we split the tree into an upper and lower part just as in the
    definition of the coproduct, but with the key difference that they are both required to be
    trees. To make this more algebraic, let $P_\mathrm{lin}\colon \CK \to \CK$ be the projection
    onto the subspace spanned by trees.  Then the \textit{linearized coproduct} is
    $\Delta_\mathrm{lin} = (P_\mathrm{lin} \otimes P_\mathrm{lin})\Delta$. In effect,
    $\Delta_\mathrm{lin}$ looks the same as the usual coproduct but only includes terms where both
    tensor factors are trees rather than arbitrary forests. Unlike the coproduct, the linearized
    coproduct fails to be coassociative: there are multiple different maps $\CK \to \CK^{\otimes k}$
    that can be built by iterating it. For instance in the case $k = 3$ there are distinct maps
    $(\Delta_\mathrm{lin} \otimes \id)\Delta_\mathrm{lin}$ and $(\id \otimes
    \Delta_\mathrm{lin})\Delta_\mathrm{lin}$. It is not hard to see that if we iterate all the way
    to $k = |t|$, the terms that we can get from all of these maps taken collectively correspond
    exactly to the binary tubings.

    The linearized coproduct is itself a nice algebraic object as it is co-pre-Lie and specifically is dual to the pre-Lie product given by insertion of rooted trees.  See \cite{CLpre} for more on the structure of this product.
\end{remark}

We are now ready to give the tubing expression for maps $\CK_I \to \KK[L]$ induced by the universal
property \cref{thm:ck universal decorated} which was the main result of \cite{tubings}. Let us fix a set $I$ of decorations and a family of
1-cocycles
\[
    \Lambda_i f(L) = \int_0^L A_i(d/du)f(u)\,du
\]
where
\[
    A_i(L) = \sum_{n \ge 0} a_{i,n} L^n.
\]
Given an $I$-tree $t$, let us write $d(t)$ for the decoration of the root vertex, and $d(v)$ for the
decoration of a vertex $v$. \label{symboldef:mel}For a tubing $\tau$ of $t$, we define the
\textit{Mellin monomial} 
\[
    \mel(\tau) = \prod_{\substack{v \in t \\ v \ne \rt t}} a_{d(v), \rank(\tau, v)}.
\]
We call this the Mellin monomial since it is a monomial made of coefficients from the Mellin transforms of the primitives driving the Dyson--Schwinger equation. 
With these definitions in hand, we can state the formula.

\begin{theorem}[{\cite[Theorem 4.2]{tubings}}] \label{thm:tubing expansion 1}
    With the above setup, the unique map $\phi\colon \CK_I \to \KK[L]$ satisfying $\phi B_+^{(i)} =
    \Lambda_i \phi$ is given on trees by the formula
    \[
        \phi(t) = \sum_{\tau \in \Tub(t)} \mel(\tau) \sum_{k=1}^{b(\tau)} a_{d(t),
        b(\tau) - k} \frac{L^k}{k!}.
    \]
\end{theorem}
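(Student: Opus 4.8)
The plan is to prove the formula by strong induction on $|t|$, reducing it to a multiplicative identity which I then match against the recursive description of tubings in \cref{thm:recursive tubing}. The first observation is purely computational: from the defining integral formula for $\Lambda_i$ one finds $\Lambda_i\!\left(\frac{L^m}{m!}\right) = \sum_{k=1}^{m+1} a_{i,\,m+1-k}\frac{L^k}{k!}$, so the asserted formula is precisely $\phi(t) = \Lambda_{d(t)}\big(g(t)\big)$, where I abbreviate $g(t) = \sum_{\tau\in\Tub(t)} \mel(\tau)\,\frac{L^{b(\tau)-1}}{(b(\tau)-1)!}$. On the other hand, writing $t = B_+^{(d(t))}(t_1\cdots t_p)$ with $t_1,\dots,t_p$ the subtrees hanging off the root, the recursion \cref{eqn:cocycle morphism recursive} gives $\phi(t) = \Lambda_{d(t)}\big(\phi(t_1)\cdots\phi(t_p)\big)$. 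Hence the theorem is equivalent to the \emph{product formula} $\phi(t_1)\cdots\phi(t_p) = g(t)$, and this is where all the combinatorial content sits.

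To prove the product formula I would show that both $g(t)$ and the product $h(t) := \phi(t_1)\cdots\phi(t_p)$ satisfy the same \emph{peeling} recursion
\[
    X(t) = \sum_{v} \lin\big(\phi(t_v)\big)\,\Int\big(X(t\setminus t_v)\big),
\]
where $v$ ranges over the non-root vertices of $t$, $t_v$ is the principal downset at $v$, and $t\setminus t_v$ is the complementary upset (still a tree with the same root); together with the base value $X = 1$ on the one-vertex tree, this recursion determines $X$ by induction on $|t|$. For $g$ this is exactly the bookkeeping of \cref{thm:recursive tubing}: its bijection $\tau\leftrightarrow(t_v,\tau',\tau'')$ multiplies the Mellin monomial as $\mel(\tau) = \mel(\tau')\,a_{d(t_v),\,b(\tau')-1}\,\mel(\tau'')$ and shifts the $b$-statistic by $b(\tau) = b(\tau'')+1$, so that summing the divided powers turns the $\tau''$-sum into $\Int(g(t\setminus t_v))$ while the $\tau'$-sum collapses to the coefficient $\lin(\phi(t_v))$ once the inductive hypothesis is applied to the smaller tree $t_v$.

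The crucial---and least obvious---step is that $h(t)$ obeys the same recursion, and here I would exploit that $\phi$ is in fact a \emph{bialgebra} morphism: each $\Lambda_i$ is a $1$-cocycle by \cref{thm:polynomial cocycles}, so \cref{thm:ck universal decorated} upgrades $\phi$ from an algebra morphism to a bialgebra morphism. By \cref{thm inf char} this yields $\frac{d}{dL}\phi = (\lin\phi)*\phi$. Differentiating $h(t)$, applying the product rule, and recognising each factor $\frac{d}{dL}\phi(t_j)$ as $((\lin\phi)*\phi)(t_j)$ converts $\frac{d}{dL}h(t)$ into convolution sums over the children; the key simplification is that $\lin\phi$ vanishes on any disconnected forest (a product of at least two series each with zero constant term), so the only surviving downsets are the connected ones, which after reindexing over the children are exactly the principal downsets $t_v$ at non-root vertices $v$. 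Since $h(t)$ has no constant term, integrating back recovers the peeling recursion for $h$, and matching the two recursions gives $g(t) = h(t)$.

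I expect the main obstacle to be precisely this reconciliation of the two recursions: the tubing side is governed by the geometric decomposition of \cref{thm:recursive tubing} (peeling off one lower tube at a time), whereas the analytic side is governed by the convolution identity for the bialgebra morphism $\phi$, and the bridge between them is the observation that, for a tree, the connected downsets are exactly the principal downsets---so that the full coproduct appearing in $(\lin\phi)*\phi$ collapses onto the same single-lower-tube decomposition that drives the tubing recursion. Care is needed to track that the roots of $t\setminus t_v$ and $t$ coincide (so the decoration $d(t)$ is unchanged) and that the empty and one-vertex edge cases line up with the conventions $\phi(\varnothing)=1$ and $b(\tau)=\rank(\tau,\rt t)+1$.
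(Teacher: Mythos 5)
Your proposal is correct, but it takes a genuinely different route from the argument the paper relies on. (The paper does not reprove this statement---it cites \cite[Theorem 4.2]{tubings}---but its own version of the argument is the proof of the generalization, \cref{thm:tubing expansion 2}, which follows the same template.) The paper's route is operator-level and concludes by uniqueness: it sets $\psi$ equal to the right-hand side of the formula, lets $\sigma = \lin\psi$, proves the convolution-power identity $\sigma^{*k}(t) = \sum_{b(\tau) \ge k} a_{d(t),\,b(\tau)-k}\mel(\tau)$ (the analogue of \cref{thm:tubing tensor lemma 1}) so that $\psi = \exp_*(L\sigma)$ and hence satisfies $\frac{d}{dL}\psi = \psi * \sigma$, proves separately that $\sigma B_+^{(i)} = \sum_m a_{i,m}\sigma^{*m}$ (the analogue of \cref{thm:tubing tensor lemma 2}), and then differentiates $\psi B_+^{(i)}$ in $L$, invoking \cref{thm:cocycle convolution}, to get $\psi B_+^{(i)} = \Lambda_i\psi$; uniqueness in \cref{thm:ck universal decorated} then forces $\phi = \psi$. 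You instead work with $\phi$ itself, do strong induction on $|t|$, reduce the theorem to the product formula $\phi(t_1)\cdots\phi(t_p) = g(t)$, and prove that by matching a common peeling recursion: the tubing side comes from \cref{thm:recursive tubing} exactly as in the paper's first lemma, while the analytic side comes from $\frac{d}{dL}\phi = (\lin\phi)*\phi$ together with the observation that $\lin\phi$ kills disconnected forests, so the coproduct collapses onto principal downsets. The two proofs share the same pillars (the recursive tubing bijection and the Hopf-theoretic ODE of \cref{thm inf char}), but your single key lemma replaces both of the paper's lemmas, you never need the $*_\delta$ machinery or the $*$-exponential form of the formula, and conversely you do need the ``moreover'' clause of \cref{thm:ck universal decorated} (that $\phi$ is a bialgebra morphism), which the paper's proof does not use since it derives the ODE for $\psi$ directly from the exponential form. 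What each buys: your argument is more elementary and self-contained for the single-insertion case; the paper's exposes the reusable structural fact that the tubing expansion is $\exp_*(L\sigma)$ for an explicit tubing-weighted infinitesimal character, which is precisely what scales cleanly to the edge-decorated generalization \cref{thm:tubing expansion 2}, where the multinomial bookkeeping is absorbed by the Pascal-type recurrence in \cref{thm:tubing tensor lemma 2}.
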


\begin{example}
    Let $t$ be the tree that appears in \cref{fig:tubing example small}. Computing the contributions
    of the five tubings and summing them up, we get
    \begin{align*}
        \phi(t) ={} &a_0^3\left(a_3L + a_2 \frac{L^2}{2!} + a_1 \frac{L^3}{3!} + a_0
        \frac{L^4}{4!}\right)
        + 2a_0^2a_1\left(a_2L + a_1 \frac{L^2}{2!} + a_0 \frac{L^3}{3!}\right) \\
      &+ 2a_0^3\left(a_2L + a_1 \frac{L^2}{2!} + a_0 \frac{L^3}{3!}\right)
    \end{align*}
    where the second and third tubing in the figure give the same contribution, as do the fourth and
    fifth. These coincidences can be explained combinatorially by the fact that in both cases the
    offending pair of tubings differ in the upper tubes but have the exact same set of lower tubes,
    which in light of \cref{rem:rank in terms of lower} is sufficient to determine the Mellin
    monomial and $b$-statistic.
\end{example}

Combining \cref{thm:cdse system solution} and \cref{thm:tubing expansion 1} we can solve the single insertion place Dyson--Schwinger equations and systems thereof.
 First, we will solve the combinatorial Dyson--Schwinger equation giving a series in $T(x) \in \CK_P[[x]]$ which encodes the recursive
structure of the DSE. We then apply the unique map $\phi\colon \CK_P \to \KK[L]$ satisfying $\phi
B_+^{(p)} = \Lambda_p \phi$ which exists by \cref{thm:ck universal decorated} to get $G(x, L) =
\phi(T(x))$ which will solve the Dyson--Schwinger equation.  The combinatorial expansion for $\phi$ (\cref{thm:tubing expansion 1}) then gives a combinatorial expansion for the solution of the Dyson--Schwinger equation.

Specifically, we obtain the following theorem.

\begin{theorem}[{\cite[Theorem 2.12]{tubings}}] \label{thm:dse tubing expansion 1}
    The unique solution to \cref{eq:dse lambda single} is
    \[
        G(x, L) = 1 + \sum_{t \in \Trees(P)}\left(\prod_{v \in t} \mu(v)^{\underline{\od(v)}}
        \right) \sum_{\tau \in \Tub(t)} \mel(\tau) \sum_{k=1}^{b(\tau)} a_{d(t), b(\tau) - k}
        \frac{x^{w(t)}L^k}{|\Aut(t)|k!}.
    \]
    In particular, the solution to the special case \cref{eq:dse lambda single simple} is
    \[
        G(x, L) = 1 + \sum_{t \in \Trees(\Natp)}\left(\prod_{v \in t} (1 +
        sw(v))^{\underline{\od(v)}} \right) \sum_{\tau \in \Tub(t)} \mel(\tau) \sum_{k=1}^{b(\tau)}
        a_{w(\rt t), b(\tau) - k} \frac{x^{w(t)}L^k}{|\Aut(t)|k!}.
    \]
\end{theorem}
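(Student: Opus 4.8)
The plan is to follow the two-step strategy outlined just before the statement: first solve the combinatorial Dyson--Schwinger equation \cref{eq:cdse single} at the level of decorated trees, then transport the solution to $\KK[L]$ along the universal morphism and read off its coefficients using the tubing expansion. Concretely, I would let $T(x) \in \CK_P[[x]]$ be the unique solution of \cref{eq:cdse single}, given explicitly by \cref{eq:cdse single soln} in \cref{thm:cdse single solution}, and let $\phi\colon \CK_P \to \KK[L]$ be the unique algebra morphism with $\phi B_+^{(p)} = \Lambda_p \phi$ supplied by \cref{thm:ck universal decorated}. The claim is that $G(x,L) = \phi(T(x))$, applying $\phi$ coefficientwise in $x$, is the desired solution; the formula then falls out by substituting the explicit forms of $T(x)$ and of $\phi$.

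The first thing to verify is that $G = \phi(T)$ really does solve \cref{eq:dse lambda single}. I would apply $\phi$ coefficientwise to \cref{eq:cdse single}; since $\phi$ is an algebra morphism with $\phi(1)=1$ intertwining $B_+^{(p)}$ with $\Lambda_p$, this yields
\[
    \phi(T(x)) = 1 + \sum_{p \in P} x^{w_p} \Lambda_p\big(\phi(T(x)^{\mu_p})\big).
\]
The one delicate point is that $\mu_p$ is an arbitrary scalar, so $T(x)^{\mu_p}$ is defined through the binomial series $\sum_{k \ge 0}\binom{\mu_p}{k}\tilde T^{\,k}$ with $\tilde T = T-1$. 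Because every tree contributing to $\tilde T$ has weight $w(t) \ge 1$, the series $\tilde T$ has no constant term in $x$, so this expansion is a well-defined element of $\CK_P[[x]]$, and since $\phi$ is an algebra morphism one checks $\phi(T^{\mu_p}) = \sum_k \binom{\mu_p}{k}\phi(\tilde T)^k = (1 + \phi(\tilde T))^{\mu_p} = \phi(T)^{\mu_p} = G^{\mu_p}$. Thus $G$ satisfies \cref{eq:dse lambda single}. Uniqueness is immediate: since $w_p \ge 1$, the coefficient of $x^n$ on the right-hand side of \cref{eq:dse lambda single} depends only on coefficients of $G$ of order strictly below $n$, so the equation determines $G$ recursively.

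To extract the explicit formula I would substitute \cref{eq:cdse single soln} for $T(x)$ and use linearity of $\phi$ together with $\phi(1)=1$ to obtain
\[
    G(x,L) = 1 + \sum_{t \in \Trees(P)} \left(\prod_{v \in t} \mu(v)^{\underline{\od(v)}}\right) \frac{x^{w(t)}}{|\Aut(t)|}\, \phi(t),
\]
and then insert the tubing expansion of $\phi(t)$ from \cref{thm:tubing expansion 1} and collect terms; this produces exactly the claimed expression. For the special case, I would set $\mu_p = 1 + s w_p$, so that $\mu(v) = 1 + s w(v)$, and observe that under the identification of $P$ with weights the root decoration $d(t)$ becomes $w(\rt t)$; specializing the general formula then gives the second displayed identity.

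The hard part will not be any single step, since the real content has already been isolated in \cref{thm:cdse single solution} and \cref{thm:tubing expansion 1}; the one genuinely delicate issue is the interchange of $\phi$ with the non-integer power $T^{\mu_p}$, that is, checking that applying $\phi$ coefficientwise commutes with the binomial-series definition of powers. This rests entirely on the grading and the vanishing of the $x^0$ term of $\tilde T$, which guarantee that all the formal sums converge in the graded completion. Once that bookkeeping is in place the remainder is routine assembly.
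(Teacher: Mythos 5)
Your proposal is correct and follows essentially the same route as the paper: the theorem is quoted there from \cite{tubings}, and the paragraph immediately preceding it describes precisely your two-step strategy of solving the combinatorial equation via \cref{thm:cdse single solution} and then pushing the solution through the universal morphism of \cref{thm:ck universal decorated}, expanded combinatorially by \cref{thm:tubing expansion 1}. Your extra care about interchanging $\phi$ with the binomial-series power $T(x)^{\mu_p}$ (justified by $\tilde T$ having no constant term in $x$) is a detail the paper leaves implicit, and your handling of it is sound.
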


Similarly for systems of Dyson--Schwinger equations we have the following result.
\begin{theorem}[{\cite[Theorem 4.8]{tubings}}] \label{thm:dse tubing expansion 1 system}
    The unique solution to the system \cref{eq:dse lambda system} is
    \[
        G_i(x, L) = 1 + \sum_{t \in \Trees_i(P)} \left(\prod_{v \in t} \mu(v)^{\underline{\odv(v)}}
        \right) \sum_{\tau \in \Tub(t)} \mel(\tau) \sum_{k=1}^{b(\tau)} a_{d(t), b(\tau) - k}
        \frac{x^{w(t)}L^k}{|\Aut(t)|}.
    \]
\end{theorem}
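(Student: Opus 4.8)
The plan is to follow verbatim the two-step strategy already used for the single-equation version \cref{thm:dse tubing expansion 1}: solve the problem entirely at the level of decorated trees, and then transport the answer to $\KK[L]$ through the universal map. The guiding observation is that the combinatorial system \cref{eq:cdse system} and the analytic system \cref{eq:dse lambda system} have exactly the same recursive shape, differing only in that each $B_+^{(p)}$ is replaced by $\Lambda_p$, and the two are intertwined by the universal morphism of \cref{thm:ck universal decorated}.

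First I would invoke \cref{thm:cdse system solution} to obtain the unique solution $\mathbf T(x) = (T_i(x))_{i \in I} \in \CK_P[[x]]^I$ of \cref{eq:cdse system}, with its explicit tree expansion. Let $\phi \colon \CK_P \to \KK[L]$ be the algebra morphism supplied by \cref{thm:ck universal decorated} satisfying $\phi B_+^{(p)} = \Lambda_p \phi$ for every $p$, extended coefficientwise to power series in $x$, and set $G_i(x, L) = \phi(T_i(x))$. I claim $\mathbf G = (G_i)_{i \in I}$ solves \cref{eq:dse lambda system}. Applying $\phi$ coefficientwise to \cref{eq:cdse system} and using that $\phi$ is an algebra morphism together with the intertwining relation gives
\[
    G_i(x, L) = 1 + \sum_{p \in P_i} x^{w_p}\, \phi\left(B_+^{(p)}(\mathbf T(x)^{\mu_p})\right) = 1 + \sum_{p \in P_i} x^{w_p}\, \Lambda_p\left(\phi(\mathbf T(x))^{\mu_p}\right),
\]
which is exactly \cref{eq:dse lambda system}.

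The step that needs genuine care, and the one I expect to be the main obstacle, is the middle equality above: that $\phi$ commutes with the generally non-integral powers, $\phi(\mathbf T^{\mu_p}) = \phi(\mathbf T)^{\mu_p}$. Here $\mathbf T^{\mu_p} = \prod_{i} T_i^{(\mu_p)_i}$ is defined through the binomial series $T_i^{s} = \sum_{k \ge 0} \binom{s}{k} \tilde T_i^{k}$, where $\tilde T_i = T_i - 1$ has no constant term in $x$; consequently each coefficient of $x^n$ is a \emph{finite} sum of products. Since $\phi$ is an algebra morphism applied coefficientwise and the $\binom{s}{k}$ are scalars, it commutes with these finite products and sums, which yields the desired identity. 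This is conceptually routine but is precisely where the bookkeeping — and the coefficientwise convention for applying operators — must be handled correctly.

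For uniqueness, I would note that because each $w_p \ge 1$, the coefficient of $x^n$ on the right-hand side of \cref{eq:dse lambda system} depends only on the coefficients of $\mathbf G$ of degree strictly less than $n$; hence the system determines $\mathbf G$ recursively, coefficient by coefficient, so the solution just exhibited is the unique one. Finally, to reach the stated formula I would substitute the explicit expansion of $\mathbf T$ from \cref{thm:cdse system solution} and then expand each $\phi(t)$ using the tubing formula \cref{thm:tubing expansion 1}. Because $\phi$ is linear and applied coefficientwise, this amounts to replacing the formal symbol $t$ in the tree expansion by the explicit polynomial $\phi(t)$, which produces the claimed double sum over $t \in \Trees_i(P)$ and $\tau \in \Tub(t)$.
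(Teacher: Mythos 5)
Your proposal is correct and follows essentially the same route as the paper, which makes its strategy explicit in the paragraph preceding the theorem and in the parallel proof of \cref{thm:multidse system solution}: solve the combinatorial system via \cref{thm:cdse system solution}, push the solution through the universal morphism of \cref{thm:ck universal decorated} (checking compatibility with the binomial-series powers and uniqueness from the recursion, exactly as you do), and expand each $\phi(t)$ with \cref{thm:tubing expansion 1}. One small remark: carrying out your final substitution yields $L^k/k!$ rather than the bare $L^k$ in the stated formula; the missing $k!$ is evidently a typo in the statement, as comparison with \cref{thm:dse tubing expansion 1,thm:multidse system solution} confirms.
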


\begin{remark}
    The tubing expansions can be related to the usual Feynman diagram expansions of Dyson--Schwinger equations by taking the trees that appear in these expansions as \textit{insertion trees} which encode
    the way a Feynman diagram is built from primitive diagrams. One can in principle recover the
    contribution of an individual Feynman diagram by an appropriately weighted sum over (tubings of)
    insertion trees for that diagram. (See for instance \cite{kreimer99}).  A more detailed discussion of how the tubing expansion relates to the Feynman diagrams can be found in Remark 5.25 of \cite{tubings}.
\end{remark}

\section{Multiple insertion places}

\subsection{Context}\label{subsec context}

With the background in hand, we are ready to move to considering multiple insertion places.  Before proving the results, we should return to the set up to understand what multiple insertion places means.

For the moment we want to view the Dyson--Schwinger equations diagrammatically as equations telling us how to insert primitive Feynman diagrams into each other to obtain series of Feynman diagrams (then applying Feynman rules will bring us to the $G(x,L)$ we've been working with).  To keep things simple for the intuition (though the general picture is much the same) let's consider the case with just one 1-cocyle $\Lambda$ in \cref{eq:dse lambda single} and with $w = 1, \mu = -1$.  Consider the way the Mellin transform $A(\rho)$ comes into the Dyson--Schwinger equation when the 1-cocycle is rewritten according to \cref{remark:differential cocycle form}.  $A(\rho)$ is essentially the Feynman integral of the primitive with $\rho$ acting as a regulator on the edge into which we are inserting (see for instance \cite{yeats:phd} for a derivation given in close to this language).  In particular, this implies that all the insertions are into the same edge.  This is, in fact, easiest to see with the Dyson--Schwinger equation in its integral equation form.  For instance, a particular instantiation of the case presently at hand in a massless Yukawa theory, and phrased in a compatible language to what we're using here can be found at the end of Section 3.2 of \cite{yeats:phd}.  In the Dyson--Schwinger equation in this form the way we see that the insertion is in a single propagator is by the scale variable of the inserted $G(x,\log k^2)$s inside the integral being the log of the momentum squared of one particular propagator---the one into which we are inserting.

Let us return to the case with multiple 1-cocycles, hence with multiple primitive Feynman diagrams.
If we are willing to insert symmetrically into all insertion places, we can get around the fact that
our single variable Mellin transforms only accounts for a single insertion place (see Section 2.3.3
of \cite{yeats:phd}), but for a finer understanding, we would like to be able to work with multiple
insertions places more honestly. In particular this means that our Mellin transforms of our
primitives should be regularized on all the edges with different variables on each, giving
$A(\rho_1, \rho_2, \ldots, \rho_m)$. Then the recursive appearances of $G(x, L)$ or of the
$G_i(x,L)$ should correspond to different $\rho_i$ according to the type of edge in the Feynman
diagram.  With the 1-cocycles written in the form of \cref{remark:differential cocycle form}, this
means we are interested in equations of the form
\begin{multline} \label{eqn:original multidse}
    G(x, L) = \\
    1 + x G(x, \partial/\partial \rho_1)^{-1} \cdots G(x, \partial/\partial \rho_m)^{-1}
     (e^{L(\rho_1 + \dots + \rho_m)} - 1) F(\rho_1, \dots, \rho_m)\big|_{\rho=0}
\end{multline}
where each edge has its own variable in the Mellin transform, as well as similar systems of equations.  Such equations have been set up in language similar to this by one of us \cite{yeats:phd} and considered further by Nabergall \cite[Section 4.2]{nabergall:phd} but until now we had no combinatorial handle of the sort given by the tubing expansions above.
The goal of this paper is to give tubing expansion solutions to such equations, thus giving combinatorial solutions to all single scale Dyson--Schwinger equations.

Mathematically, the set up is as follows.
We again have a set $P$ which will index our cocycles, but
to each $p \in P$ we associate a finite set $E_p$ of \textit{insertion places}. Each insertion place
$e$ has its own insertion exponent $\mu_{e}$; sometimes it will still be convenient to refer to
the \textit{overall insertion exponent}
\[
    \mu_p = \sum_{e \in E_p} \mu_{e}.
\]
Finally, to each $p$ we associate a vector of indeterminates $\mathbf L_p = (L_e)_{e \in E_p}$ and a
1-cocycle $\Lambda_p \in \Cycles^1(\KK[L], \KK[\mathbf L_p])$. The Dyson--Schwinger associated to
these data is
\begin{equation} \label{eqn:multidse single}
    G(x, L) = 1 + \sum_{p \in P} x^{w_p} \Lambda_p\left(\prod_{e \in E_p} G(x, L_e)^{\mu_e}\right).
\end{equation}

We will also consider systems. As before we partition our index set $P$ into
$\{P_i\}_{i \in I}$ and replace the insertion exponents with insertion exponent vectors. Our system
is then
\begin{equation} \label{eqn:multidse system}
    G_i(x, L) = 1 + \sum_{p \in P_i} x^{w_p} \Lambda_p\left(\prod_{e \in E_p} \mathbf G(x,
    L_e)^{\mu_{e}}\right).
\end{equation}

\subsection{1-cocycles and tensor powers}\label{subsec tensor 1-cocycles}
We next need to upgrade our results on 1-cocycles to tensor powers of a bialgebra $H$ as this will be the correct algebraic structure to account for multiple insertion places.

Note that there is a canonical comodule homomorphism $\mu\colon H^{\otimes r} \to H$, namely the
multiplication map $\mu(h_1 \cdots h_r) = h_1 \cdots h_r$. By \cref{thm:cocycle contravariant} we
can build various 1-cocycles $\Lambda\mu \in \Cycles^1(H, H^{\otimes r})$ for various $\Lambda \in
\Cycles^1(H)$. We will call such cocycles \textit{boring}; as we will see, they are generally the
trivial case for our results.

As in \cref{subsec hopf poly} we focus on the case of $H = \KK[L]$. We identify $\KK[L]^{\otimes r}$ with $\KK[L_1, \dots, L_r]$ made into a
comodule with coaction
\[
    \delta \mathbf L^\alpha = \sum_{\beta \le \alpha} \binom{\alpha}{\beta} L^{|\alpha| - |\beta|}
    \otimes \mathbf L^\beta.
\]
We now give a generalization of \cref{thm:polynomial cocycles} to tensor powers.

\begin{theorem} \label{thm:polynomial cocycles tensor}
    For any series $A(\mathbf L) \in \KK[[L_1, \dots, L_r]]$, the map $\KK[L_1, \dots, L_r] \to
    \KK[L]$ given by
    \begin{equation} \label{eqn:1-cocycle tensor integral}
        f(\mathbf L) \mapsto \int_0^L A(\partial/\partial u_1, \dots, \partial/\partial u_r)
        f(u_1, \dots, u_r) \big|_{u_1 = \dots = u_r = u}\,du
    \end{equation}
    is a 1-cocycle. Moreover, all 1-cocycles $\KK[L_1, \dots, L_r] \to \KK[L]$ are of this form.
\end{theorem}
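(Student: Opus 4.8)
The plan is to avoid verifying the cocycle identity for \cref{eqn:1-cocycle tensor integral} directly, and instead exhibit every such map as $\Int \actoncocycle \psi$ for a suitable linear functional $\psi$, so that the forward direction falls out of \cref{thm:actoncocycle}, and then to recover $\psi$ (hence $A$) from an arbitrary cocycle by a short differential argument.

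First I would rewrite the defining formula on the monomial basis. Writing $A(\mathbf L) = \sum_\gamma a_\gamma \mathbf L^\gamma$, a direct computation of $A(\partial/\partial u_1, \dots, \partial/\partial u_r)$ applied to $\mathbf L^\alpha$ and restricted to the diagonal gives $\Lambda_A(\mathbf L^\alpha) = \sum_{\gamma \le \alpha} a_\gamma \left(\prod_i \alpha_i^{\underline{\gamma_i}}\right)\Int(L^{|\alpha|-|\gamma|})$. Using the elementary identity $\prod_i \alpha_i^{\underline{\gamma_i}} = \binom{\alpha}{\gamma}\prod_i \gamma_i!$, this is precisely $(\Int \otimes \psi_A)\delta(\mathbf L^\alpha)$, where $\psi_A \colon \KK[\mathbf L] \to \KK$ is the linear functional defined by $\psi_A(\mathbf L^\gamma) = a_\gamma \prod_i \gamma_i!$ and $\delta$ is the coaction above. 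Thus $\Lambda_A = \Int \actoncocycle \psi_A$, and since $\Int$ is a $1$-cocycle on $\KK[L]$ by \cref{example:integral cocycle}, \cref{thm:actoncocycle} gives $\Lambda_A \in \Cycles^1(\KK[L_1, \dots, L_r], \KK[L])$ at once. Note also that $A \mapsto \psi_A$ is a bijection from $\KK[[L_1, \dots, L_r]]$ onto the full dual $\KK[\mathbf L]^*$, since each polynomial meets only finitely many monomials and the scalars $a_\gamma$ are free.

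For the converse I would read a differential equation off the cocycle condition. The key observation is the operator identity $(\id \otimes \lin)\Delta = \frac{d}{dL}$ on $\KK[L]$, verified immediately on monomials since $(\id \otimes \lin)\Delta(L^n) = nL^{n-1}$. Given an arbitrary cocycle $\Lambda \colon \KK[\mathbf L] \to \KK[L]$, applying $\id \otimes \lin$ to $\Delta\Lambda = \Lambda \otimes 1 + (\id \otimes \Lambda)\delta$ and using $\lin(1) = 0$ yields $\frac{d}{dL}\Lambda = (\id \otimes \psi)\delta$ with $\psi = \lin \circ \Lambda$. Since $\frac{d}{dL}\Int = \id$, the right-hand side equals $\frac{d}{dL}(\Int \actoncocycle \psi)$; moreover both $\Lambda(m)$ and $(\Int \actoncocycle \psi)(m)$ vanish at $L = 0$ (the former because $\counit\Lambda = 0$ by \cref{thm:cocycle counit}, the latter by construction). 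A polynomial is determined by its derivative together with its value at the origin, so $\Lambda = \Int \actoncocycle \psi = \Lambda_A$ for the series $A$ with $a_\gamma = \psi(\mathbf L^\gamma)/\prod_i \gamma_i!$. This shows every $1$-cocycle is of the claimed form.

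I expect the only real friction to be bookkeeping rather than a genuine obstacle: aligning the multi-index combinatorics of $\delta$ with the diagonal-restricted differential operator $A(\partial/\partial u_1, \dots, \partial/\partial u_r)$, and in particular the identity relating $\prod_i \alpha_i^{\underline{\gamma_i}}$, $\binom{\alpha}{\gamma}$, and $\prod_i \gamma_i!$. Everything else is forced: the factorization through $\Int$ reduces the forward direction to \cref{thm:actoncocycle}, while the identity $(\id \otimes \lin)\Delta = \frac{d}{dL}$ together with the counit condition turns uniqueness into a one-line appeal to the fundamental theorem of calculus. As a sanity check, specializing to $r = 1$ recovers \cref{thm:polynomial cocycles}.
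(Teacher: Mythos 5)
Your proof is correct and takes essentially the same route as the paper: the forward direction factors the integral operator as $\Int \actoncocycle \psi$ and invokes \cref{thm:actoncocycle}, while the converse applies $\lin$ to the cocycle identity to get $\frac{d}{dL}\Lambda = (\id \otimes \lin\Lambda)\delta$ and then integrates using $\counit\Lambda = 0$. If anything, you are more precise than the paper, whose forward direction takes $\psi(\mathbf L^\alpha) = [\mathbf L^\alpha]A(\mathbf L)$ and silently drops the factor $\prod_i \gamma_i!$ that your $\psi_A$ correctly includes (and which the paper's own converse, with its exponential generating function convention, implicitly requires).
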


\begin{proof}
    Let $\psi\colon \KK[L_1, \dots, L_r] \to \KK$ be given by
    \[
        \psi(\mathbf L^\alpha) = [\mathbf L^\alpha] A(\mathbf L).
    \]
    Then the operator defined by \cref{eqn:1-cocycle tensor integral} is simply $\Int \actoncocycle
    \psi$ where $\Int$ is the usual integral cocycle on $\KK[L]$ (see \cref{example:integral
    cocycle}) and the $\actoncocycle$ notation was defined in \cref{subsec hopf poly}. Thus by
    \cref{thm:cocycle contravariant} and \cref{thm:actoncocycle}, this operator is indeed a
    1-cocycle.

    Conversely, suppose $\Lambda$ is a 1-cocycle. We wish to find a series $A(\mathbf L)$ such that
    $\Lambda$ has the form \cref{eqn:1-cocycle tensor integral}. For $\alpha \in \Nat^r$ take
    $a_\alpha = \lin \Lambda(\mathbf L^\alpha)$ and let $A(\mathbf L)$ be the exponential generating
    function for these:
    \[
        A(\mathbf L) = \sum_{\alpha \in \Nat^r} \frac{a_\alpha \mathbf L^\alpha}{\alpha_1! \cdots
        \alpha_r!}.
    \]
    Now observe that for a polynomial $f(L)$ we have $\frac{df(L)}{dL} = \lin \leftact f(L)$.  With this in mind,
    \begin{align*}
        \frac{d \Lambda(\mathbf L^\alpha)}{dL}
        &= \lin \leftact \Lambda(\mathbf L^\alpha) \\
        &= (\id \otimes \lin)(\Lambda \otimes 1 + (\id \otimes \Lambda)\delta)\mathbf L^\alpha \\
        &= (\id \otimes \lin \Lambda)\delta \mathbf L^\alpha \\
        &= \sum_{\beta \le \alpha} \binom{\alpha}{\beta} a_\beta L^{|\alpha| - |\beta|} \\
        &= \sum_{\beta \le \alpha} \frac{a_\beta}{\beta_1! \cdots \beta_r!} \prod_{i=1}^r
        \frac{d^{\beta_i}L^{\alpha_i}}{dL^{\beta_i}} \\
        &= \sum_{\beta \le \alpha} \frac{a_\beta}{\beta_1! \cdots \beta_r!} \frac{\partial^{|\beta|}
        \mathbf u^\alpha}{\partial u_1^{\beta_1} \cdots \partial u_r^{\beta_r}} \bigg|_{u_1 = \dots
        = u_r = L}\\
        &= A\left(\frac{\partial}{\partial u_1}, \dots, \frac{\partial}{\partial u_r}\right) \mathbf
        u^\alpha \bigg|_{u_1 = \dots = u_r = L}
    \end{align*}
    and hence by linearity, for any polynomial $f(\mathbf L)$ we have
    \[
        \frac{d \Lambda f(\mathbf L)}{dL} = A\left(\frac{\partial}{\partial u_1}, \dots,
        \frac{\partial}{\partial u_r}\right) f(\mathbf u) \bigg|_{u_1 = \dots = u_r = L}
    \]
    which is also the derivative of the right-hand side of \cref{eqn:1-cocycle tensor integral}.
    But since $\Lambda f(\mathbf L)$ must have zero constant term by \cref{thm:cocycle counit}, it
    is exactly given by \cref{eqn:1-cocycle tensor integral}, as wanted.
\end{proof}

\begin{remark} \label{remark:boring polynomial cocycles}
    The multiplication map $\KK[L]^{\otimes r} \to \KK[L]$ corresponds to the map $\KK[\mathbf L]
    \to \KK[L]$ that substitutes $L$ for all of the variables. The adjoint map $\KK[[L]] \to
    \KK[[\mathbf L]]$ is the substitution $L \mapsto L_1 + \dots + L_r$. Thus the boring 1-cocycles
    correspond to series that expand in powers of the sum $L_1 + \dots + L_r$.
\end{remark}

\label{symboldef:cke}Next we construct the analogue in this setting of the Connes--Kreimer Hopf
algebra. Let $\widetilde{\mathcal T}_r$ be the set of unlabelled rooted trees with edges decorated
by elements of $\{1, \dots, r\}$ and $\widetilde{\mathcal F}_r$ the corresponding set of forests.
Define $\CKE_r$ to be the free vector space on $\widetilde{\mathcal F}_r$, made into an algebra with
disjoint union as multiplication and a downset/upset coproduct exactly as in $\CK$ but preserving the
decorations on the (remaining) edges. Now define $\tilde B_+\colon \CKE_r^{\otimes r} \to \CKE_r$ as
follows: for $f_1, \dots, f_r \in \widetilde{\mathcal F}_r$, $\tilde B_+(f_1 \otimes \dots \otimes
f_r)$ is the tree obtained from the forest $f_1 \cdots f_r$ by adding a new root with an edge to the
root of each component, where the edges to $f_i$ have decoration $i$. Note that clearly $\CKE_1
\cong \CK$ and in this case $\tilde B_+$ is just the usual $B_+$.

Very similar combinatorial and algebraic structures are used in the study of regularity structures, see \cite{BHZregularity}.

\begin{proposition} \label{thm:tildebplus is 1-cocycle}
    $\tilde B_+ \in \Cycles^1(\CKE_r, \CKE_r^{\otimes r})$.
\end{proposition}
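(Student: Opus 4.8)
The plan is to verify the cocycle identity directly, mirroring the combinatorial argument from \cref{subsec CK} that showed $B_+$ is a 1-cocycle on $\CK$. First I would pin down the comodule structure on $\CKE_r^{\otimes r}$. Since $\CKE_r$ is commutative, the $r$-fold multiplication $m\colon \CKE_r^{\otimes r} \to \CKE_r$ is a coalgebra morphism, so $\CKE_r^{\otimes r}$ becomes a left $\CKE_r$-comodule via the codiagonal coaction $\delta = (m \otimes \id^{\otimes r})\Delta_{\CKE_r^{\otimes r}}$ (the same structure used for $\KK[L]^{\otimes r}$ in \cref{thm:polynomial cocycles tensor}). Unwinding the downset coproduct on each tensor factor, this reads, for forests $f_1, \dots, f_r \in \widetilde{\mathcal F}_r$,
\[
\delta(f_1 \otimes \cdots \otimes f_r) = \sum_{D_1, \dots, D_r} (D_1 \cdots D_r) \otimes \big((f_1 \setminus D_1) \otimes \cdots \otimes (f_r \setminus D_r)\big),
\]
where each $D_i$ ranges over the downsets of $f_i$. (Note that $\tilde B_+$ is genuinely not boring: boring cocycles factor through $m$, so this direct argument really is needed.)

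The identity to check is then $\Delta \tilde B_+ = \tilde B_+ \otimes 1 + (\id \otimes \tilde B_+)\delta$, and I would verify it by classifying the downsets of the tree $t = \tilde B_+(f_1 \otimes \cdots \otimes f_r)$, exactly as in the motivating computation for $B_+$. The new root is the unique maximal element, so the only downset containing it is all of $t$, which contributes $\tilde B_+(f_1 \otimes \cdots \otimes f_r) \otimes 1 = (\tilde B_+ \otimes 1)(f_1 \otimes \cdots \otimes f_r)$. Every other downset $D$ avoids the root, hence is a downset of the forest $f_1 \cdots f_r$; since the $f_i$ occupy disjoint components, $D$ factors uniquely as $D_1 \cdots D_r$ with $D_i$ a downset of $f_i$. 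The key geometric fact is that the complement of a downset in a rooted tree is an upset containing the root, so $t \setminus D$ is again a tree with the same root, and the subtrees hanging off that root are precisely the surviving pieces $f_i \setminus D_i$, still joined by edges decorated $i$. Thus $t \setminus D \cong \tilde B_+\big((f_1 \setminus D_1) \otimes \cdots \otimes (f_r \setminus D_r)\big)$, decorations included, and summing over all such $D$ reproduces exactly $(\id \otimes \tilde B_+)\delta$ applied to $f_1 \otimes \cdots \otimes f_r$.

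The only real work is the bookkeeping in this last bijection: one must check that splitting a downset of $t$ into the tuple $(D_1, \dots, D_r)$ is a bijection onto tuples of downsets of the $f_i$, that the decorations on the edges meeting the root are preserved under the identification $t \setminus D \cong \tilde B_+(\cdots)$, and that the decorations interior to the $f_i$ are untouched. None of this is difficult—it is the undecorated $B_+$ argument with one extra layer tracking which factor $f_i$ each component came from—so I expect the main obstacle to be purely notational rather than conceptual. Extending the identity from the basis of forests to all of $\CKE_r^{\otimes r}$ is then immediate by linearity, which gives $d_1 \tilde B_+ = 0$, that is, $\tilde B_+ \in \Cycles^1(\CKE_r, \CKE_r^{\otimes r})$.
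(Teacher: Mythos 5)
Your proposal is correct and follows essentially the same route as the paper: identify the coaction on $\CKE_r^{\otimes r}$ as the componentwise coaction with the left legs multiplied together, then verify the cocycle identity by classifying the downsets of $\tilde B_+(f_1 \otimes \cdots \otimes f_r)$ into the unique one containing the new root (giving the $\tilde B_+ \otimes 1$ term) and the rest, which factor as tuples of downsets of the $f_i$ and whose complements are again trees of the form $\tilde B_+\big((f_1\setminus D_1) \otimes \cdots \otimes (f_r \setminus D_r)\big)$ with decorations preserved. Indeed, your write-up places the term $\tilde B_+ \otimes 1$ on the correct side of the tensor, consistent with the cocycle condition as defined in \cref{subsec hopf poly}, whereas the paper's displayed computation contains a harmless transcription slip ($1 \otimes t$ rather than $t \otimes 1$).
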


\begin{proof}
    Let $t = \tilde B_+(f_1 \otimes \dots \otimes f_r)$. The only downset in $t$ that contains the
    root is all of $t$, and each other downset is the union of a downset in $f_i$ for each $i$. In
    the complementary upset, all edges on the root have the same decoration as they do in $t$, so
    \begin{align*}
        \Delta t
        &= \sum_{f \in J(t)} f \otimes (t \setminus f) \\
        &= 1 \otimes t + \sum_{f_1' \in J(f_1)} \cdots \sum_{f_r' \in J(f_r)} f_1' \cdots f_r'
        \otimes \tilde B_+((f_1 \setminus f_1') \otimes \dots \otimes (f_r \setminus f_r')).
    \end{align*}
    On the other hand, the left coaction $\delta$ of $\CKE_r$ on $\CKE_r^{\otimes r}$ is, by
    definition,
    \[
        \delta(f_1 \otimes \dots \otimes f_r) = \sum_{f_r' \in J(f_r)} f_1' \cdots f_r'
        \otimes (f_1 \setminus f_1') \otimes \dots \otimes (f_r \setminus f_r').
    \]
    Comparing these, we see that indeed
    \[
        \Delta \tilde B_+ = 1 \otimes \tilde B_+ + (\id \otimes \tilde B_+)\delta. \qedhere
    \]
\end{proof}

The universal property of $\CK$ naturally extends to $\CKE_r$. The proof is essentially the same as
the original Connes--Kreimer universal property result.

\begin{theorem} \label{thm:ck universal multilinear 1}
    Let $A$ be a commutative algebra and $\Lambda\colon A^{\otimes r} \to A$ a linear map. There
    exists a unique map $\phi\colon \CKE_r \to A$ such that $\phi \tilde B_+ = \Lambda \phi^{\otimes
    r}$. Moreover, if $A$ is a bialgebra and $\Lambda$ is a 1-cocycle then $\phi$ is a bialgebra
    morphism.
\end{theorem}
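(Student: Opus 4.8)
The plan is to follow the proof of the decorated Connes--Kreimer universal property \cref{thm:ck universal decorated}, the only genuine new ingredient being the comodule bookkeeping forced by the tensor-power source of $\tilde B_+$. For existence and uniqueness of $\phi$ as an algebra morphism, the structural fact I would use is that every tree $t$ with $|t|>1$ factors uniquely as $t = \tilde B_+(f_1 \otimes \cdots \otimes f_r)$, where $f_i \in \widetilde{\mathcal F}_r$ is the forest of branches joined to the root by an edge decorated $i$ (and the one-vertex tree is $\tilde B_+(1 \otimes \cdots \otimes 1)$). The relation $\phi\tilde B_+ = \Lambda\phi^{\otimes r}$ then forces $\phi(t) = \Lambda(\phi(f_1)\otimes\cdots\otimes\phi(f_r))$, and multiplicativity forces $\phi$ of a forest to be the product of its values on components. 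Reading these two constraints as a recursion on the number of vertices shows at once that $\phi$ is uniquely determined and that the recursion genuinely defines an algebra morphism with the required intertwining property.

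For the bialgebra claim, assume $A$ is a bialgebra and $\Lambda\in\Cycles^1(A,A^{\otimes r})$, where $A^{\otimes r}$ carries the coaction $\delta_A(a_1\otimes\cdots\otimes a_r) = \sum a_1^{(1)}\cdots a_r^{(1)}\otimes a_1^{(2)}\otimes\cdots\otimes a_r^{(2)}$ that is the exact analogue of the coaction $\delta$ on $\CKE_r^{\otimes r}$ from \cref{thm:tildebplus is 1-cocycle}. Counit compatibility $\counit_A\phi = \counit_{\CKE_r}$ is immediate: both are algebra morphisms, so it suffices to check a nonempty tree $t$, where $\phi(t) = \Lambda(\cdots)$ and $\counit_A\Lambda = 0$ by \cref{thm:cocycle counit}. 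Coproduct compatibility $\Delta_A\phi = (\phi\otimes\phi)\Delta$ I would prove by induction on $|t|$, noting that both sides are algebra morphisms $\CKE_r\to A\otimes A$ and hence need only be compared on trees.

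The heart of the argument --- and the step I expect to be the main obstacle --- is the comodule intertwining identity $\delta_A\circ\phi^{\otimes r} = (\phi\otimes\phi^{\otimes r})\circ\delta$. This holds because both coactions are built from the coproduct applied in each tensor slot followed by multiplication of the ``downset'' halves: the multiplication is respected since $\phi$ is always an algebra morphism, and the slotwise coproduct is respected on each $f_i$ by the inductive hypothesis, every branch $f_i$ (and each of its components) having strictly fewer than $|t|$ vertices. Granting this, I would write $t = \tilde B_+(f_1\otimes\cdots\otimes f_r)$, apply the $1$-cocycle identity $\Delta_A\Lambda = \Lambda\otimes 1 + (\id\otimes\Lambda)\delta_A$, transport $\delta_A$ across $\phi^{\otimes r}$ using the intertwining identity, and re-collapse using $\Lambda\phi^{\otimes r} = \phi\tilde B_+$ together with the $1$-cocycle identity for $\tilde B_+$ in the form $\Delta\tilde B_+ = \tilde B_+\otimes 1 + (\id\otimes\tilde B_+)\delta$. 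Concretely the computation runs
\[
\Delta_A\phi(t) = \phi(t)\otimes 1 + (\phi\otimes\Lambda\phi^{\otimes r})\delta(f_1\otimes\cdots\otimes f_r) = (\phi\otimes\phi)\big(t\otimes 1 + (\id\otimes\tilde B_+)\delta(f_1\otimes\cdots\otimes f_r)\big) = (\phi\otimes\phi)\Delta t,
\]
closing the induction. I anticipate the only delicate points to be fixing the precise form of $\delta_A$ and keeping the placement of tensor factors consistent with the conventions of \cref{thm:tildebplus is 1-cocycle}; everything else is routine verification.
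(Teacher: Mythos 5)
Your proposal is correct and takes essentially the same route as the paper's own proof: the same recursive definition of $\phi$ from the unique factorization $t = \tilde B_+(f_1 \otimes \cdots \otimes f_r)$, and the same induction on the number of vertices whose key step is precisely the coaction-intertwining identity $\delta_A \circ \phi^{\otimes r} = (\phi \otimes \phi^{\otimes r}) \circ \delta$, which the paper justifies exactly as you do (inductive hypothesis on the branches plus multiplicativity of $\phi$), followed by re-collapsing with the two 1-cocycle identities. The only differences are presentational: you phrase the computation in operator form and add an explicit counit check, whereas the paper writes out the corresponding sums over downsets.
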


\begin{proof}
    Suppose $t \in \widetilde{\mathcal T}_r$. We can uniquely write $t$ in the form $t = \tilde
    B_+(f_1 \otimes \dots \otimes f_r)$. Then we can recursively set
    \[
        \phi(t) = \Lambda(\phi(f_1) \otimes \dots \otimes \phi(f_r))
    \]
    where for a forest, $\phi(f)$ is the product over the components. Clearly this is a well-defined
    algebra map and the unique one satisfying the desired identity.

    If $A$ is a bialgebra and $\Lambda$ is a 1-cocycle, we compute
    \begin{align*}
        \Delta \phi(t)
        &= \Delta \Lambda(\phi(f_1) \otimes \dots \otimes \phi(f_r)) \\
        &= \phi(t) \otimes 1 + (\id \otimes \Lambda) \delta(\phi(f_1) \otimes \dots \otimes
        \phi(f_r))
    \end{align*}
    where $\delta$ is the coaction of $A$ on $A^{\otimes r}$. Now suppose that $\phi$ preserves
    coproducts for each of the forests, i.e.
    \[
        \Delta \phi(f_i) = \sum_{f_i' \in J(f_i)} \phi(f_i') \otimes \phi(f_i \setminus f_i').
    \]
    Then
    \begin{align*}
        \delta(\phi(f_1) \otimes \dots \otimes \phi(f_r))
        &= \sum_{f_1' \in J(f_1)} \cdots \sum_{f_r' \in J(f_r)} \phi(f_1' \cdots f_r') \otimes
        \phi(f_1 \setminus f_1') \otimes \dots \otimes \phi(f_r \setminus f_r')
    \end{align*}
    and hence
    \begin{align*}
        \Delta \phi(t)
        &= \phi(t) \otimes 1 + \sum_{f_1' \in J(f_1)} \cdots \sum_{f_r' \in J(f_r)}
        \phi(f_1' \cdots f_r') \otimes \Lambda(\phi(f_1 \setminus f_1') \otimes \dots \otimes
        \phi(f_r \setminus f_r')) \\
        &= \phi(t) \otimes 1 + \sum_{f_1' \in J(f_1)} \cdots \sum_{f_r' \in J(f_r)} \phi(f_1'
        \cdots f_r') \otimes \phi(\tilde B_+((f_1 \setminus f_1') \otimes \dots \otimes (f_r \setminus
        f_r')) \\
        &= \phi(t) \otimes 1 + \sum_{f \in J(t)} \phi(f) \otimes \phi(t \setminus f) \\
        &= (\phi \otimes \phi)(\Delta t)
    \end{align*}
    as desired.
\end{proof}

\begin{example} \label{example:boring universal map}
    Consider the (boring) 1-cocycle $B_+\mu \in \Cycles^1(\CK, \CK^{\otimes r})$. By \cref{thm:ck
    universal multilinear 1}, there is a unique $\beta\colon \CKE_r \to \CK$ such that $\beta \tilde
    B_+ = B_+\mu\beta^{\otimes r} = B_+\beta\mu$. It is easily seen that such a map is given by simply
    forgetting the edge decorations. More generally, consider any boring 1-cocycle $\Lambda\mu$ on
    any bialgebra $H$. Let $\psi\colon \CK \to H$ be the unique map such that $\psi B_+ = \Lambda
    \psi$ and let $\beta\colon \CKE_r \to \CK$ continue to denote the edge-undecorating map. Now we
    observe that
    \[
        \psi \beta \tilde B_+ = \psi B_+ \mu\beta^{\otimes r} = \Lambda \psi^{\otimes r} \mu
        \beta^{\otimes r} = \Lambda \mu (\psi \beta)^{\otimes r}
    \]
    so the universal map is simply $\phi = \psi\beta$ and the boring case is indeed boring.
\end{example}

In the next section we will generalize the tubing expansion to the map from \cref{thm:ck universal
multilinear 1}. As before, we will want a more general version including several 1-cocycles at once.
For this it is convenient to allow tensor products indexed by arbitrary finite sets rather than just
ordinals. Note that everything we did with $\KK[L]$ still works here: we can identify
$\KK[L]^{\otimes E}$ with $\KK[L_e\colon e \in E]$, and all 1-cocycles are integro-differential
operators as in \cref{thm:polynomial cocycles tensor} but slightly more notationally challenging.

Let $I$ be a set and $\mathcal E = \{E_i\}_{i \in I}$ be a family of finite sets.  Define a
\textit{$(I, \mathcal E)$-tree} to be a rooted tree where each vertex is decorated by an element of
$I$ and each edge from a parent of type $i$ is decorated by an element of $E_i$. Denote the set of
$(I, \mathcal E)$-trees by $\widetilde{\mathcal T}(I, \mathcal E)$. Analogously we have $(I,
\mathcal E)$-forests and we denote the set of these by $\widetilde{\mathcal F}(I, \mathcal E)$. Let
$\CKE_{I,\mathcal E}$ denote the free vector space on $\widetilde{\mathcal F}(I, \mathcal E)$.  As
usual, we make this into a bialgebra with disjoint union as the product and a downset/upset
coproduct preserving the decorations on the vertices and (remaining) edges. For $i \in I$, let
$\tilde B_+^{(i)}\colon \CKE_{I, \mathcal E}^{\otimes E_i} \to \CKE_{I, \mathcal E}$ be the operator
that adds a new root joined to each component with the appropriate decoration. These are 1-cocycles
by the same argument as in \cref{thm:tildebplus is 1-cocycle}.

\begin{theorem} \label{thm:ck universal multilinear 2}
    Let $A$ be a commutative algebra and $\{\Lambda_i\}_{i \in I}$ a family of linear maps,
    $\Lambda_i\colon A^{\otimes E_i} \to A$. There exists a unique algebra morphism $\phi\colon
    \CKE_{I, \mathcal E} \to A$ such that $\phi B_+^{(i)} = \Lambda_i \phi^{\otimes E_i}$.
    Moreover, if $\Lambda_i$ is a 1-cocycle for each $i$ then $\phi$ is a bialgebra morphism.
\end{theorem}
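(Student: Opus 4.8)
The plan is to follow the proof of \cref{thm:ck universal multilinear 1} essentially verbatim, carrying along the extra index $i \in I$ and the varying arities $E_i$. First I would establish existence and uniqueness of $\phi$ as an algebra morphism. Any $(I, \mathcal E)$-tree $t$ decomposes uniquely: its root carries some decoration $i \in I$, and grouping the subtrees hanging off the root according to the decoration of the connecting edge gives, for each $e \in E_i$, a forest $f_e \in \widetilde{\mathcal F}(I, \mathcal E)$, so that $t = \tilde B_+^{(i)}\left(\bigotimes_{e \in E_i} f_e\right)$. This forces the recursive definition
\[
    \phi(t) = \Lambda_i\left(\bigotimes_{e \in E_i} \phi(f_e)\right),
\]
where $\phi$ on a forest is the product of its values on the components. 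This is well-defined by induction on the number of vertices, is manifestly an algebra map, and is clearly the unique algebra map satisfying $\phi \tilde B_+^{(i)} = \Lambda_i \phi^{\otimes E_i}$ for all $i$.

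For the bialgebra claim, assume each $\Lambda_i$ is a 1-cocycle and argue by induction on $|t|$ that $\phi$ preserves coproducts. Writing $t = \tilde B_+^{(i)}\left(\bigotimes_{e \in E_i} f_e\right)$ and applying the cocycle identity for $\Lambda_i$ on the comodule $A^{\otimes E_i}$, namely $\Delta \Lambda_i = \Lambda_i \otimes 1 + (\id \otimes \Lambda_i)\delta$, I would obtain
\[
    \Delta \phi(t) = \phi(t) \otimes 1 + (\id \otimes \Lambda_i)\,\delta\left(\bigotimes_{e \in E_i} \phi(f_e)\right),
\]
where $\delta$ is the coaction of $A$ on $A^{\otimes E_i}$. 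By the inductive hypothesis $\phi$ preserves the coproduct of each smaller forest $f_e$; substituting these and expanding $\delta$ exactly as in \cref{thm:ck universal multilinear 1} rewrites the second summand as $\sum_{f \in J(t)} \phi(f) \otimes \phi(t \setminus f)$, the sum ranging over downsets of $t$ not containing the root. Here one uses that such downsets are precisely the products $\prod_{e \in E_i} f_e'$ of downsets $f_e' \in J(f_e)$, and that the complementary upset is $\tilde B_+^{(i)}\left(\bigotimes_{e \in E_i}(f_e \setminus f_e')\right)$ with the root-edge decorations unchanged. Together with the $\phi(t) \otimes 1$ term this is exactly $(\phi \otimes \phi)(\Delta t)$, completing the induction.

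The argument introduces no genuinely new ideas beyond \cref{thm:ck universal multilinear 1}; the only thing requiring care is the bookkeeping of the two independent layers of decoration---the vertex type $i \in I$ and the edge labels drawn from the corresponding set $E_i$. I expect the main (and still minor) obstacle to be purely notational: one must check that the recursive decomposition $t = \tilde B_+^{(i)}\left(\bigotimes_{e \in E_i} f_e\right)$ is genuinely unique once both kinds of decoration are remembered, that the coaction $\delta$ on $A^{\otimes E_i}$ is indexed correctly by $E_i$, and that the bijection between downsets of $t$ and tuples $(f_e')_{e \in E_i}$ preserves the edge decorations sitting on the root. All of these are the same verifications already carried out in \cref{thm:tildebplus is 1-cocycle,thm:ck universal multilinear 1}, now performed uniformly over $i \in I$, so no new difficulty arises.
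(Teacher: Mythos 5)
Your proposal is correct and is exactly what the paper intends: its entire proof of \cref{thm:ck universal multilinear 2} is the single line ``Analogous to \cref{thm:ck universal multilinear 1}'', and your argument is precisely that analogy spelled out---the unique decomposition $t = \tilde B_+^{(i)}\bigl(\bigotimes_{e \in E_i} f_e\bigr)$ forcing the recursive definition of $\phi$, followed by the same cocycle-plus-induction computation for the bialgebra claim, with the only new content being the bookkeeping of the two decoration layers.
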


\begin{proof}
    Analogous to \cref{thm:ck universal multilinear 1}.
\end{proof}

\begin{remark} \label{remark:boring cocycles are really boring}
    Analogously to \cref{example:boring universal map}, if \textit{all} of the 1-cocycles are boring
    there will be a factorization of $\phi$ through the map $\CKE_{I,\mathcal E} \to \CK_I$ that
    forgets edge decorations. However, we can make a more refined statement: if $\Lambda_i$ is
    boring, then $\phi$ is independent of the edge decorations for vertices of type $i$. Proving
    this is left as an exercise (though in the case that the target algebra is $\KK[L]$ it will
    follow from the results of the next section).
\end{remark}

\subsection{The renormalization group equation and the invariant charge}\label{subsec rge multiple}

The relationship between the renormalization group equation, the invariant charge and the Riordan group generalizes to the case of distinguished insertion places. Naturally, this will come from a generalization of \cref{thm:cocycle fdb
implies rio} to allow cocycles on tensor powers of the target bialgebra $H$. In this case the
statement gets more complicated but the proof is much the same. First we need a generalization of
\cref{thm:general rio coproduct}.

\begin{lemma} \label{thm:general rio coaction}
    Let $\delta$ be the left coaction of $\RioHopf$ on $\RioHopf^{\otimes E}$ for $E$ a finite set.
    Then for any $\mathbf u \in \KK^r$ and any exponent vector $\alpha \in \Nat^E$,
    \[
        \delta\left(\bigotimes_{e \in E} Y(x)^{u_e} \Pi(x)^{\alpha_e} \right) =
        \sum_{j \ge 0} Y(x)^{|\mathbf u|} \Pi(x)^{j} \otimes [x^j] \bigotimes_{e \in E}
        Y(x)^{u_e} \Pi(x)^{\alpha_e}.
    \]
\end{lemma}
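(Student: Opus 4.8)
The plan is to reduce this to the single-factor statement \cref{thm:general rio coproduct} by exploiting the fact that the coaction on a tensor power is assembled from the coproducts of the individual factors. Recall, exactly as for the coactions appearing in \cref{thm:tildebplus is 1-cocycle,thm:ck universal multilinear 1}, that the left coaction of $\RioHopf$ on $\RioHopf^{\otimes E}$ is obtained by applying $\Delta$ to each tensor factor, multiplying together all the left-hand legs, and leaving the right-hand legs tensored; in Sweedler-style notation $\delta\left(\bigotimes_e h_e\right) = \left(\prod_e h_e^{(1)}\right) \otimes \bigotimes_e h_e^{(2)}$. Since $Y(x)$ has constant term $1$, each $Y(x)^{u_e}$ is a well-defined power series for arbitrary $u_e \in \KK$, and the coproduct of each factor is supplied directly by \cref{thm:general rio coproduct}, so no separate reduction to integer exponents is needed here.

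First I would apply \cref{thm:general rio coproduct} to each factor, writing
\[
    \Delta\left(Y(x)^{u_e}\Pi(x)^{\alpha_e}\right) = \sum_{j_e \ge 0} Y(x)^{u_e}\Pi(x)^{j_e} \otimes [x^{j_e}]Y(x)^{u_e}\Pi(x)^{\alpha_e}.
\]
Substituting these into the coaction and multiplying the left legs together, the key simplification is that the left legs combine multiplicatively: $\prod_{e \in E} Y(x)^{u_e}\Pi(x)^{j_e} = Y(x)^{|\mathbf u|}\Pi(x)^{\sum_e j_e}$, where $|\mathbf u| = \sum_{e \in E} u_e$. This collapses the dependence on the multi-index $(j_e)_{e \in E}$ down to its total $j = \sum_e j_e$, and I would then regroup the sum over multi-indices according to this total.

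The final step is to recognize the remaining sum over the right legs as a single coefficient extraction. Under the convention adopted just before \cref{thm:general rio coproduct}, whereby $x$ is treated as a scalar so that a tensor product of power series is read as a power series with tensor coefficients, the standard Cauchy-product expansion
\[
    [x^j]\bigotimes_{e \in E} Y(x)^{u_e}\Pi(x)^{\alpha_e} = \sum_{\sum_e j_e = j} \bigotimes_{e \in E} [x^{j_e}]Y(x)^{u_e}\Pi(x)^{\alpha_e}
\]
is precisely what appears after regrouping, yielding the claimed formula. I do not anticipate any genuine obstacle: the content is entirely the single-factor lemma together with the multiplicativity of the $Y$ and $\Pi$ powers, and the only point demanding care is keeping the tensor-product-of-power-series bookkeeping consistent with the scalar-$x$ convention when passing to the total degree $j$.
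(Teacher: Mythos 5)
Your proposal is correct and is exactly the paper's argument: the paper's proof reads, in full, ``Immediate from \cref{thm:general rio coproduct} by the definition of the coaction,'' and your write-up simply fills in the details of that one-line deduction (coproduct on each factor, multiplication of the left legs, regrouping by total degree, and the Cauchy-product identification of the inner sum with $[x^j]$). The observation that no separate reduction to integer exponents is needed is also right, since \cref{thm:general rio coproduct} already holds for arbitrary $s \in \KK$.
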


\begin{proof}
    Immediate from \cref{thm:general rio coproduct} by the definition of the coaction.
\end{proof}

With this we can prove the following.

\begin{lemma} \label{thm:cocycle tensor fdb implies rio}
    Let $H$ be a bialgebra, $P$ be a set, and $\{E_p\}_{p \in P}$ a family of finite sets. For $p \in
    P$ let $\Lambda_p\colon H^{\otimes E_p} \to H$ be a 1-cocycle, let $\mathbf u_p = (u_e)_{e \in
    E_p}$ be a vector with $|\mathbf u| = 1$, and let $\mathbf w_p = (w_e)_{e \in E_p}$ be a
    nonzero exponent vector. Suppose $\phi\colon \RioHopf \to H$ is an algebra morphism. Let
    $\Phi(x) = \phi(\Pi(x))$ and suppose $\phi(Y(x)) = F(x)$ where $F(x)$ is the unique solution to
    \[
        F(x) = 1 + \sum_{p \in P} \Lambda_p\left(\bigotimes_{e \in E_p} F(x)^{u_{e}}
        \Phi(x)^{\alpha_{e}}\right).
    \]
    Then for $n \ge 0$, if $\phi$ is a bialgebra morphism when restricted to $\FdB^{(n)}$, it is
    also a bialgebra morphism when restricted to $\RioHopf^{(n)}$.
\end{lemma}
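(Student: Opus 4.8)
The plan is to mirror the inductive proof of \cref{thm:cocycle fdb implies rio} essentially verbatim, substituting the coaction $\delta$ of $H$ on $H^{\otimes E_p}$ for the coproduct wherever the $1$-cocycle property is invoked, and using \cref{thm:general rio coaction} in place of \cref{thm:general rio coproduct}. I induct on $n$, with the base case $n=0$ trivial since $\FdB^{(0)} = \RioHopf^{(0)} = \KK$. For the inductive step I assume $\phi$ is a bialgebra morphism on $\RioHopf^{(n-1)}$ and, from the hypothesis that $\phi$ is a bialgebra morphism on $\FdB^{(n)}$, that it preserves the coproduct of $\pi_{n-1}$. It then suffices to show $\phi$ preserves the coproduct of $y_n$, since $\RioHopf^{(n)}$ is generated by $\RioHopf^{(n-1)}$ together with $\pi_{n-1}$ and $y_n$.

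Writing $\tilde G_p(x) = \bigotimes_{e \in E_p} F(x)^{u_e}\Phi(x)^{w_e} \in H^{\otimes E_p}[[x]]$, the defining equation gives $\phi(y_n) = [x^n]F = \sum_p \Lambda_p([x^n]\tilde G_p)$. Applying $\Delta_H$ and the tensor $1$-cocycle identity $\Delta_H \Lambda_p = \Lambda_p \otimes 1 + (\id \otimes \Lambda_p)\delta$ splits $\Delta_H\phi(y_n)$ into two pieces. The first is $\sum_p \Lambda_p([x^n]\tilde G_p) \otimes 1 = [x^n]F \otimes 1$, which is exactly the $j=0$ term of $(\phi \otimes \phi)\Delta y_n$ coming from \cref{eqn:rio coproduct}. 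Everything then rests on the second piece $\sum_p (\id \otimes \Lambda_p)\delta([x^n]\tilde G_p)$.

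The crux is the transport identity $\delta([x^n]\tilde G_p) = \sum_{j \ge 0} [x^n]F\Phi^j \otimes [x^j]\tilde G_p$, obtained by carrying \cref{thm:general rio coaction} (which gives precisely this shape in $\RioHopf^{\otimes E_p}$, with left factor $Y^{|\mathbf u_p|}\Pi^j = Y\Pi^j$ exactly because $|\mathbf u_p| = 1$) across $\phi^{\otimes E_p}$. Since $\phi$ is an algebra morphism and both $\delta$'s are the diagonal coactions built from the respective coproducts, this transport is valid provided $\phi$ preserves the coproduct of each tensor factor $[x^{m_e}]Y^{u_e}\Pi^{w_e}$ appearing in $[x^n]\tilde G_p$, summed over $\sum_e m_e = n$. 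The step I expect to be the main obstacle — really the only point needing care — is checking that every such factor lies where the induction already controls $\phi$: because $\Pi^{w_e}$ has order $w_e$, both the $y$- and $\pi$-indices occurring in $[x^{m_e}]Y^{u_e}\Pi^{w_e}$ are at most $m_e - w_e \le n - |\mathbf w_p| \le n-1$, where the final inequality uses that $\mathbf w_p$ is nonzero. Hence each factor lies in the subalgebra generated by $\RioHopf^{(n-1)}$ together with $\pi_{n-1}$, on which $\phi$ preserves coproducts; since the elements on which an algebra morphism preserves coproducts form a subalgebra, the transport identity follows.

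With this identity in hand, applying $(\id \otimes \Lambda_p)$ and summing over $p$ turns the second piece into $\sum_{j \ge 1} [x^n]F\Phi^j \otimes [x^j]\sum_p \Lambda_p(\tilde G_p) = \sum_{j \ge 1} [x^n]F\Phi^j \otimes [x^j]F$, using the defining equation once more to rewrite $\sum_p \Lambda_p(\tilde G_p) = F - 1$. Combining with the first piece recovers $\sum_{j=0}^n [x^n]F\Phi^j \otimes [x^j]F = (\phi \otimes \phi)\Delta y_n$ by \cref{eqn:rio coproduct}, completing the induction.
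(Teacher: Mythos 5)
Your proof is correct and follows essentially the same route as the paper's: the same induction, the same splitting of $\Delta\phi(y_n)$ via the $1$-cocycle property, and the same appeal to \cref{thm:general rio coaction}, with the decisive point in both being that $|\mathbf w_p| > 0$ keeps $y_n$ (and every generator outside the coproduct-preserving subalgebra generated by $\RioHopf^{(n-1)}$ and $\pi_{n-1}$) out of $[x^n]\bigotimes_{e \in E_p} Y(x)^{u_e}\Pi(x)^{w_e}$. Your index-bounding argument merely spells out in detail what the paper dispatches with the one-line remark that no $y_n$ occurs in that coefficient.
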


\begin{proof}
    The setup for our induction argument is identical to that in the proof of \cref{thm:cocycle fdb
    implies rio}. Thus, supposing that $\phi$ is a bialgebra morphism on $\RioHopf^{(n-1)}$ for some
    $n \ge 1$, we set out to show that $\phi$ preserves the coproduct of $y_n$. Note that we have no
    $y_n$ in $[x^n] \bigotimes_{e \in E_p} Y(x)^{u_e} \Pi(x)^{w_e}$ since $|\mathbf w_p| > 0$. Thus by
    \cref{thm:general rio coaction} we have
    \begin{align*}
        \Delta \phi(y_n)
        &= \Delta([x^n]F(x)) \\
        &= [x^n] F(x) \otimes 1 + \sum_{p \in P} (\id \otimes \Lambda_p) [x^n]\delta\left(
        \bigotimes_{e \in E_p} F(x)^{u_{e}} \Phi(x)^{w_{e}}\right) \\
        &= [x^n] F(x) \otimes 1 + \sum_{p \in P} \sum_{j \ge 0} [x^n] F(x)\Phi(x)^j \otimes \Lambda_p
        \left([x^j] \bigotimes_{e \in E_p} F(x)^{u_{e}} \Phi(x)^{w_{e}}\right) \\
        &= \sum_{j \ge 0} [x^n] F(x)\Phi(x)^j \otimes [x^j] F(x)
    \end{align*}
    as desired.
\end{proof}

With \cref{thm:cocycle tensor fdb implies rio} in mind we can formulate an appropriate notion of
invariant charge for Dyson--Schwinger equations with distinguished insertion places. We want a
series $Q(x)$ to play the role of $\Phi(x)$ in the statement of the lemma. In the case of a single
equation \cref{eqn:multidse single} we see that the condition we want is that for some $s \in \KK$,
the insertion exponents can be written in the form
\begin{equation} \label{eqn:insertion exponent relation multidse single}
    \mu_{e} = u_e + sw_e
\end{equation}
where $u_e \in \KK$ and $\alpha_e \in \Nat$ are such that
\begin{equation} \label{eqn:insertion exponent relation for u}
    \sum_{e \in E_p} u_e = 1
\end{equation}
and
\begin{equation} \label{eqn:insertion exponent relation for alpha}
    \sum_{e \in E_p} w_e = w_p
\end{equation}
for each $p$. In this case we take $Q(x) = xT(x)^s$ as in the case of a single ordinary DSE. We can
then write our (combinatorial) equation as
\begin{equation} \label{thm:multicdse single with q}
    T(x) = \sum_{p \in P} \tilde B_+\left(\bigotimes_{e \in E_p} T(x)^{u_e} Q(x)^{\alpha_e}\right).
\end{equation}

\begin{remark}
    The choice of exactly how to write the insertion exponents in the form \cref{eqn:insertion
    exponent relation multidse single} is not unique in general. However the value of $s$ and hence
    of $Q(x)$ does not depend on this choice, because the \textit{overall} insertion exponents
    satisfy $\mu_p = 1 + sw_p$ regardless.

    Recall our original motivating example was equation \cref{eqn:original multidse} which has $m$
    edge types and all insertion exponents equal to $-1$. Thus we here have $s = -(1+m)$. We can
    write it in the form \cref{eqn:insertion exponent relation multidse single} by choosing one edge
    type $e_0$ to have $u_{e_0} = m$ and $w_{e_0} = 1$, and all others to have $u_e = -1$ and $w_e = 0$. Thus while writing this equations in the form \cref{thm:multicdse single with q} is
    convenient for our purposes in this section, it does involve arbitrarily breaking the symmetry
    of the original equation.
\end{remark}

For systems the situation is similar. For $e \in E_p$ where $p \in P_i$, we want the insertion
exponent vector to satisfy a relation
\[
    \mu_e = u_e1_i + \alpha_e \mathbf s
\]
where $u_e$ and $\alpha_e$ still satisfy \cref{eqn:insertion exponent relation for u} and
\cref{eqn:insertion exponent relation for alpha}. Thus the form of our system is
\begin{equation} \label{eqn:multicdse system with q}
    T_i(x) = \sum_{p \in P_i} \tilde B_+^{(p)}\left(\bigotimes_{e \in E_p} T_i(x)^{u_e}
    Q(x)^{\alpha_e}\right).
\end{equation}
where as before
\[
    Q(x) = x\prod_{i \in I} T_i(x)^{s_i}.
\]
With the setup done, we can state the main result of this section. This proves a conjecture of
Nabergall \cite[Conjecture 4.2.3]{nabergall:phd} which corresponds to the case all insertion
exponents equal $-1$.

\begin{theorem}
    Let $\mathbf T(x) \in \CKE_{P,\mathcal E}[[x]]^I$ be the solution to the combinatorial
    Dyson--Schwinger system \cref{eqn:multicdse system with q}. Then for any $i \in I$, the map
    $\phi_i\colon \RioHopf \to \CKE_{P,\mathcal E}$ defined by $\phi_i(Y(x)) = T_i(x)$ and
    $\phi_i(\Pi(x)) = Q(x)$ is a bialgebra morphism. As a consequence, the solution $\mathbf G(x, L)$
    to the corresponding Dyson--Schwinger system satisfies the
    renormalization group equations
    \[
        \left(\frac{\partial}{\partial L} - \beta(x) \frac{\partial}{\partial x} -
        \gamma_i(x)\right) G_i(x, L) = 0
    \]
    where $\gamma_i(x)$ is the linear term in $L$ of $G_i(x, L)$ and
    \[
        \beta(x) = \sum_{i \in I} s_ix\gamma(x).
    \]
\end{theorem}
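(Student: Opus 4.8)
The plan is to prove the theorem exactly as \cref{thm:dse system rio} was proved, with the tensor-power machinery of \cref{subsec tensor 1-cocycles} standing in for the single-variable machinery used there. Concretely, I would show by induction on $n$ that every $\phi_i$ is a bialgebra morphism when restricted to $\RioHopf^{(n)}$; since $\RioHopf$ is the union of the $\RioHopf^{(n)}$ and an algebra map that respects the coproducts of all generators is a bialgebra morphism, this gives the first assertion. The base case $n=0$ is trivial because $\RioHopf^{(0)} = \KK$.

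For the inductive step I would assume each $\phi_i$ is a bialgebra morphism on $\RioHopf^{(n-1)}$ and first verify that $\phi_i$ preserves the coproduct of $\pi_{n-1}$, so that it is a bialgebra morphism on $\FdB^{(n)}$. Since $\phi_i(\Pi(x)) = Q(x)$ with $Q(x) = x\prod_{j} T_j(x)^{s_j}$ having precisely the shape used in \cref{thm:dse system rio}, this computation is word-for-word the one carried out there: expand $\Delta([x^{n-1}]Q(x))$ by applying \cref{thm:general rio coproduct} to each factor $T_j(x)^{s_j}$, collect tensor factors by total $x$-degree, and recognize $(\phi_i\otimes\phi_i)(\Delta\pi_{n-1})$. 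This uses only the hypothesis on $\RioHopf^{(n-1)}$, as the coefficients involved lie in $\phi_i(\RioHopf^{(n-1)})$. With $\phi_i$ now a bialgebra morphism on $\FdB^{(n)}$, I would invoke \cref{thm:cocycle tensor fdb implies rio} to promote this to $\RioHopf^{(n)}$, taking $H = \CKE_{P,\mathcal E}$, the $1$-cocycles $\tilde B_+^{(p)}$ (cocycles by \cref{thm:tildebplus is 1-cocycle}), $F(x)=T_i(x)$, and $\Phi(x)=Q(x)$; the lemma's hypotheses $|\mathbf u_p|=1$ and $\mathbf w_p\neq 0$ are exactly \cref{eqn:insertion exponent relation for u,eqn:insertion exponent relation for alpha}, and \cref{eqn:multicdse system with q} is precisely the fixed-point equation the lemma requires. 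This closes the induction.

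For the renormalization group equation I would compose $\phi_i$ with the bialgebra morphism $\CKE_{P,\mathcal E}\to\KK[L]$ supplied by \cref{thm:ck universal multilinear 2} (valid because each $\Lambda_p$ is a $1$-cocycle on $\KK[L]$), obtaining a bialgebra morphism $\RioHopf\to\KK[L]$ sending $Y(x)\mapsto G_i(x,L)$ and $\Pi(x)\mapsto x\prod_j G_j(x,L)^{s_j}$. Its restriction to $\FdB$ is a composite of bialgebra morphisms, hence a bialgebra morphism, so \cref{thm:rio pde} applies and yields the RGE for $G_i(x,L)$. Reading off the linear-in-$L$ term of $x\prod_j G_j(x,L)^{s_j}$, and using $G_j(x,0)=1$, gives $\beta(x)=\sum_j s_j x\gamma_j(x)$, while $\gamma_i(x)$ is the linear-in-$L$ term of $G_i(x,L)$ as claimed.

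The main obstacle is bookkeeping rather than any new idea: essentially all the work lives in the already-established \cref{thm:cocycle tensor fdb implies rio,thm:rio pde}, so the hard part is confirming that \cref{eqn:multicdse system with q} is genuinely in the form those results demand. This amounts to checking that absorbing the weight $x^{w_p}$ into the factors $Q(x)^{w_e}$ (legitimate since $\sum_{e} w_e = w_p$, using $Q(x)=x\prod_i T_i(x)^{s_i}$) together with the decomposition $\mu_e = u_e 1_i + w_e\mathbf s$ faithfully reproduces the original system \cref{eqn:multidse system}, and that the coefficients appearing in the lemma's $y_n$-computation are free of $y_n$ thanks to $\mathbf w_p\neq 0$. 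Once this identification is made the result follows immediately.
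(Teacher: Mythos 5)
Your proposal is correct and matches the paper's proof, which is stated in one line as ``identical to the proof of \cref{thm:dse system rio} but using \cref{thm:cocycle tensor fdb implies rio} in place of \cref{thm:cocycle fdb implies rio}''---your induction on $\RioHopf^{(n)}$, the $\FdB^{(n)}$ step via \cref{thm:general rio coproduct}, the promotion via the tensor lemma, and the final appeal to \cref{thm:rio pde} after composing with the universal map to $\KK[L]$ are exactly the intended argument spelled out in detail. No gaps.
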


\begin{proof}
    This follows by an identical proof to \cref{thm:dse system rio} but using \cref{thm:cocycle
    tensor fdb implies rio} in place of \cref{thm:cocycle fdb implies rio}.
\end{proof}

\subsection{Reducing to ordinary DSEs}\label{subsec qle}

Before we embark on generalizing the tubing expansion to Dyson--Schwinger systems with distinguished
insertion places, we will pause to consider an alternative approach, namely transforming such
systems into ordinary (single insertion place) Dyson--Schwinger systems. This is motivated by a conjecture of Nabergall
\cite[Conjecture 4.2.2]{nabergall:phd} that the solution to \cref{eqn:original multidse} can also be obtained by an explicit linear variable substitution from an also explicit single insertion place Dyson--Schwinger equation. This turns out to be false as demonstrated by the
following.

We will take the case $m=2$ in \cref{eqn:original multidse} and index the coefficients of the Mellin transform as
\[
(\rho_1+\rho_2)F(\rho_1, \rho_2) = \sum_{i,j\geq 0}b_{i,j}\rho_1^i\rho_2^j.
\]
Then, iterating \cref{eqn:original multidse} and calling the Green function in this case $\hat{G}$ we obtain
\begin{align*}
  \hat{G}(x,L) = \,& 1 + Lb_{0,0}x - (L^2b_{0,0}^2 + Lb_{0,0}(b_{0,1} + b_{1,0}))x^2 \\
             & + \bigg(\frac{5}{3}L^3b_{0,0}^3 + \frac{7}{2}L^2b_{0,0}^2(b_{0,1} + b_{1,0}) \\
             & \qquad + Lb_{0,0}(b_{0,1}^2 + 4b_{0,0}b_{0,2} + 2b_{0,1}b_{1,0} + b_{1,0}^2 + b_{0,0}b_{1,1} + 4b_{0,0}b_{2,0})\bigg)x^3 \\
             & - \bigg(\frac{10}{3}L^4b_{0,0}^4 + 11L^3b_{0,0}^3(b_{0,1} + b_{1,0}) \\
             & \qquad + L^2b_{0,0}^2\bigg(\frac{15}{2}b_{0,1}^2 + 20b_{0,0}b_{0,2} + 15b_{0,1}b_{1,0} + \frac{15}{2}b_{1,0}^2 + 5b_{0,0}b_{1,1} + 20b_{0,0}b_{2,0}\bigg) \\
             & \qquad + Lb_{0,0}(b_{0,1}^3 + 15b_{0,0}b_{0,1}b_{0,2} + 28b_{0,0}^2b_{0,3} + 3b_{0,1}^2b_{1,0} + 15b_{0,0}b_{0,2}b_{1,0} \\
             & \qquad \qquad + 3b_{0,1}b_{1,0}^2 + b_{1,0}^3 + 3b_{0,0}b_{0,1}b_{1,1} + 3b_{0,0}b_{1,0}b_{1,1} + 4b_{0,0}^2b_{1,2} \\
  & \qquad \qquad + 15b_{0,0}b_{0,1}b_{2,0} + 15b_{0,0}b_{1,0}b_{2,0} + 4b_{0,0}^2b_{2,1} + 28b_{0,0}^2b_{3,0})\bigg)x^4 + O(x^5).
\end{align*}
These calculations were done with SageMath, but at the cost of some tedium are also doable by hand.
The single insertion place Dyson--Schwinger equation would need to be \cref{eq simplest dse} with $\mu=-2$ so that the total exponent agrees.  Indexing the Mellin transform as after \cref{eq simplest dse} and calculating likewise we obtain
\begin{align*}
  G(x,L) = \,& 1 + La_0x - (L^2a_0^2 + 2La_0a_1)x^2 + (\frac{5}{3}L^3a_0^3 + 7L^2a_0^2a_1 + La_0(4a_1^2 + 10a_0a_2))x^3 \\
             & - \bigg(\frac{10}{3}L^4a_0^4 + 22L^3a_0^3a_1 + L^2a_0^2(30a_1^2 + 50a_0a_2) \\
  & \qquad + La_0(8a_1^3 + 72a_0a_1a_2 + 80a_0^2a_3)\bigg)x^4 + O(x^5).
\end{align*}
{}From the coefficients of $x$ we see that $a_0= b_{0,0}$.  Using that, from the coefficient of $x^2$ we see that $a_1= (b_{1,0}+b_{0,1})/2$, and likewise from the coefficient of $x^3$ we get $a_2 = (4b_{0,2}+b_{1,1}+4b_{2,0})/10$.

The problem comes with the coefficient of $x^4$.  Making the already established substitutions and taking the difference of the coefficient of $x^4$ in $\hat{G}$ and $G$ we obtain
\begin{align*}
  0 = \,& \frac{1}{5}(3b_{0,1}b_{0,2} + 140b_{0,0}b_{0,3} + 3b_{0,2}b_{1,0} - 3b_{0,1}b_{1,1} - 3b_{1,0}b_{1,1} + 20b_{0,0}b_{1,2} \\
  & \quad + 3b_{0,1}b_{2,0} + 3b_{1,0}b_{2,0} + 20b_{0,0}b_{2,1} + 140b_{0,0}b_{3,0} - 400b_{0,0}a_3)Lb_{0,0}^2.
\end{align*}
So we see that solving for $a_3$ would involve inverting $b_{0,0}$ and so no linear substitution into $G(x,L)$ can give $\hat{G}(x,L)$ and so in particular no substitution of the form in \cite[Conjecture 4.2.2]{nabergall:phd} can do so.  This disproves the conjecture.

Despite this negative result, there are some cases in which it \textit{is} possible to do this. One
example is when all the cocycles that appear in the system are boring: \cref{remark:boring
cocycles are really boring} essentially says that this is possible even at the level of
\textit{combinatorial} Dyson--Schwinger systems, see the discussion at the beginning of \cref{subsec tubing multiple} for further details.

There is one more case we know of in which such a transformation exists, namely when the overall
insertion exponents are all equal to 1, or equivalently $\beta(x) = 0$. In the case of
undistinguished insertion places, this would be a linear equation. For distinguished insertion
places we call such an equation \textit{quasi-linear}. Explicitly, quasi-linear DSEs are simply the
special case of \cref{eqn:multidse single} in which
\[
    \sum_{e \in E_p} \mu_e = 1
\]
for each $p$.

Quasi-linear DSEs turn out to inherit some of the special properties of linear DSEs. To see why, we
first consider the multiple-insertion-place analogue of \cref{eqn:differentiate dse}:
\[
    \frac{\partial}{\partial L} G(x, L) = \sum_{p \in P} x^{w_p} A_p\left(\frac{\partial}{\partial
    L_e} \colon e \in E_p\right) \prod_{e \in E_p} G(x, L_e)^{\mu_e}\bigg|_\text{all $L_e = L$}.
\]
For a \textit{general} DSE in multiple insertion places, any attempt to follow the template of
\cref{sec:equations for gamma} stops here. The RGE allows us to replace $\partial/\partial L_e$ with
a differential operator in $x$ acting on the factor for $e$ on the right, but to get an expression
like \cref{eqn:combine dse with rge} we would need an operator acting on the whole product. However,
in the quasi-linear case, we only get multiplication by a series with no differential part, so we
can press on to get
\[
    \frac{\partial}{\partial L} G(x, L) = \sum_{p \in P} x^{w_p} A_p\left(\mu_e\gamma(x) \colon e
    \in E_p\right) \prod_{e \in E_p} G(x, L)^{\mu_e}
\]
and then set $L = 0$ to get the analogue of \cref{eqn:equation for gamma functional}:
\begin{equation} \label{eqn:equation for gamma quasilinear}
    \gamma(x) = \sum_{p \in P} x^{w_p} A_p(\mu_e \gamma(x)\colon e \in E_p).
\end{equation}
But note that this functional equation is actually of the same form as \cref{eqn:equation for gamma
functional} and so in fact also arises from an ordinary linear DSE!

\begin{theorem} \label{thm:quasilinear}
    If $\sum_{e \in E_p} \mu_e = 1$ for each $p$, the solution to \cref{eqn:multidse single} is the
    same as the solution to the ordinary linear DSE
    \[
        G(x, L) = 1 + \sum_{p \in P} x^{w_p} \int_0^L \tilde A_p(\partial/\partial u) G(x, u)\,du
    \]
    where
    \[
        \tilde A_p(L) = A_p(\mu_e L\colon e \in E).
    \]
\end{theorem}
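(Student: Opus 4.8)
The plan is to prove the two equations have the same solution by exhibiting both solutions as \emph{scaling solutions} governed by a single anomalous dimension, and then matching the functional equations that this anomalous dimension satisfies. The quasi-linear hypothesis $\sum_{e \in E_p}\mu_e = 1$ says exactly that every overall insertion exponent equals $1$; in the language of \cref{subsec rge multiple} this corresponds to $s = 0$, so $\beta(x) = 0$ for \cref{eqn:multidse single}. Consequently $G(x,L)$ satisfies the renormalization group equation $\frac{\partial}{\partial L}G = \gamma(x)G$ and is therefore the scaling solution $G(x,L) = e^{\gamma(x)L}$, determined entirely by its anomalous dimension $\gamma(x)$ (see \cref{rmk rge meaning}). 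The discussion preceding the theorem has already extracted the equation governing $\gamma$, namely \cref{eqn:equation for gamma quasilinear}, which in the notation of the statement reads $\gamma(x) = \sum_{p\in P}x^{w_p}\tilde A_p(\gamma(x))$, using that $\tilde A_p$ is defined precisely so that $\tilde A_p(\gamma) = A_p(\mu_e\gamma\colon e\in E_p)$.

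Next I would analyse the right-hand side. The ordinary DSE in the statement is a single-insertion-place equation whose cocycles are $\tilde\Lambda_p f(L) = \int_0^L \tilde A_p(\partial/\partial u)f(u)\,du$ and whose insertion exponents are all $1$; being linear it again has $\beta(x) = 0$, so its solution is likewise a scaling solution $e^{\tilde\gamma(x)L}$ for its anomalous dimension $\tilde\gamma$. Specializing \cref{eqn:equation for gamma functional} to this equation yields $\tilde\gamma(x) = \sum_{p\in P}x^{w_p}\tilde A_p(\tilde\gamma(x))$, which is \emph{literally the same} functional equation as the one satisfied by $\gamma$. This coincidence of functional equations is the crux of the argument and is exactly the observation flagged in the text immediately before the theorem.

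It then remains to note that this functional equation has a unique power-series solution. Since $\gamma(0) = 0$ and every $w_p \ge 1$, the coefficient of $x^n$ on the right-hand side of $\gamma(x) = \sum_p x^{w_p}\tilde A_p(\gamma(x))$ depends only on the coefficients $[x^m]\gamma$ with $m < n$; hence the equation determines $\gamma$ recursively, coefficient by coefficient, so its solution is unique. Therefore $\gamma = \tilde\gamma$, whence $e^{\gamma(x)L} = e^{\tilde\gamma(x)L}$, i.e.\ the two Green functions coincide.

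The step I expect to require the most care, as opposed to routine manipulation, is justifying that the quasi-linear multiple-insertion equation genuinely falls under the renormalization group framework, so that the scaling form $e^{\gamma L}$ is legitimately available. Concretely, one must confirm that the insertion exponents admit a decomposition $\mu_e = u_e + sw_e$ with $\sum_{e\in E_p} u_e = 1$ in the sense of \cref{eqn:insertion exponent relation for u}; here this works with $s = 0$ and $u_e = \mu_e$, forcing $Q(x) = x$ and $\beta(x) = 0$. This is precisely what makes the derivative computation at the start of \cref{subsec qle}---and hence \cref{eqn:equation for gamma quasilinear}---valid, and once that is secured the identification of anomalous dimensions, and thus of the two solutions, is immediate.
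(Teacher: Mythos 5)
Your proposal is correct and follows essentially the same route as the paper's own proof, which likewise observes that \cref{eqn:equation for gamma quasilinear} and \cref{eqn:equation for gamma functional} are the same functional equation, so the two solutions have the same anomalous dimension, and then invokes \cref{rmk rge meaning} to conclude that this determines the Green function. Your additions---the explicit $s=0$, $u_e=\mu_e$, $Q(x)=x$ decomposition justifying the renormalization group framework, the scaling form $e^{\gamma(x)L}$, and the coefficient-by-coefficient uniqueness of the functional equation's solution---are details the paper leaves implicit, and they are all valid.
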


\begin{proof}
    By \cref{eqn:equation for gamma functional} and \cref{eqn:equation for gamma quasilinear} they
    have the same anomalous dimension, which is sufficient because of \cref{rmk rge meaning}. 
\end{proof}

\subsection{Tubing solutions to Dyson--Schwinger equations with multiple insertion places}\label{subsec tubing multiple}

Finally, our main result is
tubing expansions of solutions to Dyson--Schwinger equations with multiple insertion places.  Fix a
family $\mathcal E = \{E_i\}_{i \in I}$ of finite sets. For each $i$, introduce indeterminates
$\mathbf L_i = (L_e\colon e \in E_i)$ and choose a 1-cocycle $\Lambda_i \in \Cycles^1(\KK[L],
\KK[\mathbf L_i])$. Let $A_i(\mathbf L_i)$ be the corresponding power series given by
\cref{thm:polynomial cocycles tensor}. We will choose to expand the series using multinomial
coefficients:
\[
    A_i(\mathbf L_i) = \sum_{\alpha \in \Nat^{E_i}} a_{i,\alpha} \binom{|\alpha|}{\alpha} \mathbf
    L_i^\alpha.
\]
This curious-looking convention can be justified by the observation that if $\Lambda_i$ is boring
then, by \cref{remark:boring polynomial cocycles}, we can write
\[
    A_i(\mathbf L_i) = B\left(\sum_{e \in E_i} L_e\right)
\]
for some series $B(L)$. Our convention is such that in this case we have $a_{i,\alpha} =
[L^{|\alpha|}] B(L)$. In particular, this will make our expansion manifestly identical to
\cref{thm:tubing expansion 1} in the case that all of the cocycles are boring, and more generally
make it obviously independent of the edge decorations for those vertex types with boring 1-cocycles
as suggested by \cref{remark:boring cocycles are really boring}.

\begin{figure}
    \begin{center}
        \includegraphics[page=3,width=0.3\textwidth]{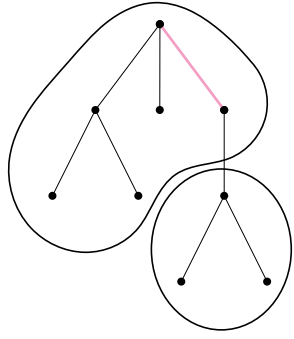}
    \end{center}
    \caption{An upper tube and its corresponding lower tube. The type of the upper tube is the
    decoration of the highlighted edge.}
\end{figure}

\label{symboldef:rankv} To generalize the tubing expansion to this case, we need the appropriate
generalizations of the statistics that appear in \cref{thm:tubing expansion 1}. Let $t$ be an $(I,
\mathcal E)$-tree and $\tau$ be a binary tubing of $t$. Suppose $t' \in \tau$ is an upper tube and
$t''$ the corresponding lower tube (i.e. the unique lower tube such that $t' \cup t'' \in \tau$). We
define the \textit{type} of $t'$ to be the decoration of the first edge on the unique path from $\rt
t'$ to $\rt t''$. By construction, the type is an
element of $E_i$ where $i = d(t')$ is the decoration of the root vertex of $t'$. Note we only assign
types to upper tubes, not lower tubes. For each vertex $v \in t$ and edge type $e \in E_{d(v)}$
define the \textit{$e$-rank} $\rank_e(\tau, v)$ to be the number of upper tubes of type $e$ rooted
at $v$.  Collect these together to get the \textit{rank vector} $\rankv(\tau, v) \in
\Nat^{E_{d(v)}}$. Clearly $|\rankv(\tau, v)| = \rank(\tau, v)$. \label{symboldef:mel2} Then we define
the \textit{Mellin monomial} of $\tau$ to be
\[
    \mel(\tau) = \prod_{\substack{v \in t \\ v \ne \rt t}} a_{d(v), \rankv(\tau, v)}.
\]

\label{symboldef:betavec}The analogue of the $b$-statistic is slightly more complicated. For $1 \le
k \le b(\tau)$ write $\beta_i^k(\tau)$ the number of upper tubes of type $i$ containing (hence
rooted at) $\rt t$, excluding the outermost $k - 1$ upper tubes. Collect these into a vector
$\beta^k(\tau)$ (which for lack of a better name we simply term the \textit{$k$th $\beta$-vector} of
$\tau$). Thus $\beta^1(\tau) = \rankv(\tau, \rt t)$ and $|\beta^k(\tau)| = b(\tau) - k$. We are now
ready to state our expansion.

\begin{theorem} \label{thm:tubing expansion 2}
    With the above setup, the unique map $\phi\colon \CKE_{I, \mathcal E} \to \KK[L]$ satisfying
    $\phi \tilde B_+^{(i)} = \Lambda_i \phi$ is given on trees by the formula
    \begin{equation} \label{eqn:tubing expansion 2}
        \phi(t) = \sum_{\tau \in \Tub(t)} \mel(\tau) \sum_{k=1}^{b(\tau)} a_{d(t),\beta^k(\tau)}
        \frac{L^k}{k!}.
    \end{equation}
\end{theorem}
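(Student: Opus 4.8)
The plan is to prove \cref{eqn:tubing expansion 2} by strong induction on $|t|$, checking that its right-hand side satisfies the recursion characterizing $\phi$ through the universal property \cref{thm:ck universal multilinear 2}. Write $i = d(t)$ and decompose $t = \tilde B_+^{(i)}\left(\bigotimes_{e \in E_i} f_e\right)$, where $f_e$ is the forest of branches joined to the root by edges of type $e$; then $\phi(t) = \Lambda_i\left(\bigotimes_e \phi(f_e)\right)$ with $\phi(f_e) = \prod_{s} \phi(s)$ taken over the components $s$ of $f_e$, each of which is smaller than $t$. Writing $\Lambda_i$ as the integro-differential operator with symbol $A_i = \sum_\alpha a_{i,\alpha}\binom{|\alpha|}{\alpha}\mathbf L_i^\alpha$ via \cref{thm:polynomial cocycles tensor}, this reads
\begin{equation*}
    \phi(t) = \int_0^L \sum_{\alpha \in \Nat^{E_i}} a_{i,\alpha} \binom{|\alpha|}{\alpha} \prod_{e \in E_i} \phi(f_e)^{(\alpha_e)}(u)\, du.
\end{equation*}
Since integration sends $u^{m-1}/(m-1)!$ to $L^{m}/m!$, the theorem reduces to the integrand identity
\begin{equation*}
    \sum_{\alpha} a_{i,\alpha} \binom{|\alpha|}{\alpha} \prod_{e} \phi(f_e)^{(\alpha_e)}(u) = \sum_{\tau \in \Tub(t)} \mel(\tau) \sum_{m=1}^{b(\tau)} a_{i, \beta^m(\tau)} \frac{u^{m-1}}{(m-1)!},
\end{equation*}
the base case $|t|=1$ being the direct evaluation $\phi(t) = \Lambda_i(1) = a_{i,0}L$, which matches the sole (trivial) tubing.

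The combinatorial heart is a type-tracking refinement of the argument behind \cref{thm:tubing expansion 1}. Iterating the recursive structure \cref{thm:recursive tubing} at the root, the tubes of $\tau$ containing $\rt t$ form a chain of upper tubes $t = U_0 \supsetneq U_1 \supsetneq \cdots \supsetneq U_{b(\tau)-1}$ rooted at $\rt t$, and every remaining tube lies inside one of the complementary lower pieces $D_j = U_{j-1}\setminus U_j$, each a subtree carrying an induced binary tubing. The plan is to match the peeling sequence $(D_j)$ against the right-hand side: the exponent vector $\alpha$ records, via $\alpha = \beta^m(\tau)$ with $m = b(\tau)-|\alpha|$, the type-multiset of the innermost $|\alpha|$ upper tubes at $\rt t$; the Leibniz expansion of $\prod_e \phi(f_e)^{(\alpha_e)}$ distributes these peels across branches and across edge-types; and the multiplication rule $\frac{u^a}{a!}\frac{u^b}{b!} = \binom{a+b}{a}\frac{u^{a+b}}{(a+b)!}$ produces exactly the interleavings of the branches' root chains (this is where \cref{rem:rank in terms of lower} is convenient). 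The multinomial convention in $A_i$ is chosen precisely so that $\binom{|\alpha|}{\alpha}$ counts the orderings of these innermost tubes consistent with the type-vector $\alpha$, which also makes the formula manifestly reduce to \cref{thm:tubing expansion 1} when $\Lambda_i$ is boring.

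The hard part is the bijection tracking the \emph{types}. Because a branch may be peeled as several of the $D_j$ and the vertex $\rt t$ absorbs the others into its chain, the restriction of $\tau$ does not simply cut into a tubing of each branch; one must instead match the peeling against the Leibniz distribution of derivatives and the interleaving binomials simultaneously, as already happens in the undecorated case. What is genuinely new here is verifying that the type of each upper tube $U_j$—the decoration of the first edge from $\rt t$ toward $\rt D_j$—is exactly the edge-type direction $e$ along which the corresponding derivative $\partial/\partial u_e$ in $A_i$ acts, so that the exponent vectors $\alpha$ produced by the operator coincide with the vectors $\beta^m(\tau)$, and the remaining factors $a_{d(v),\rankv(\tau,v)}$ at the peeled roots assemble into $\mel(\tau)$. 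I expect everything except this type-bookkeeping to be a routine (if bulky) transcription of the proof of \cref{thm:tubing expansion 1}, with the multinomial convention doing the work of keeping the edge decorations aligned.
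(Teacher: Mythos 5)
Your route is genuinely different from the paper's, and its skeleton is sound: the reduction of \cref{eqn:tubing expansion 2} to an integrand identity via \cref{thm:polynomial cocycles tensor}, together with the base case, is correct, and the structural correspondences you identify (peels of the root chain versus Leibniz distribution of the $\partial/\partial u_e$, types of upper tubes versus the edge-type of the derivative, the role of the multinomial convention) are all the right ones. The paper avoids this direct matching entirely. It takes the right side of \cref{eqn:tubing expansion 2} as a candidate $\psi$, extracts its linear term $\sigma$ (an infinitesimal character), proves that $\sigma^{*k}(t) = \sum_{b(\tau) \ge k} a_{d(t),\beta^k(\tau)}\mel(\tau)$ (\cref{thm:tubing tensor lemma 1}, so $\psi = \exp_*(L\sigma)$) and that $\sigma \tilde B_+^{(i)} = \sum_\alpha \binom{|\alpha|}{\alpha} a_{i,\alpha}\sigma^{[\alpha]}$ (\cref{thm:tubing tensor lemma 2}), and then uses the 1-cocycle property (\cref{thm:cocycle convolution}) to get $\frac{d}{dL}\psi\tilde B_+^{(i)} = \frac{d}{dL}\Lambda_i\Psi_i$, concluding by uniqueness. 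The payoff of that formalism is that differentiating in $L$ and invoking the cocycle identity peels off \emph{one} tube at a time, so the global interleaving bookkeeping you face is absorbed into convolution identities and never has to be confronted at once.

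That bookkeeping is exactly where your proposal has a genuine gap: the ``combinatorial heart'' is described but not proved, and your stated expectation that it is ``a routine transcription of the proof of \cref{thm:tubing expansion 1}'' is misplaced, because the known proof of that theorem (Section 4 of \cite{tubings}, mirrored in this paper) is the convolution argument above, not a direct induction with Leibniz expansions; there is no direct bijective proof available to transcribe, so you would have to construct it from scratch even in the single-insertion-place case. Concretely, what remains to be proved is this. For a tubing $\tau$ of $t$ with peels $D_1,\dots,D_{b(\tau)-1}$, the peels landing in a component $s$ of $f_e$ assemble into a tubing $\tau_s$ of $s$ \emph{together with} the extra datum $k_s$ (the number of peels landing in $s$); this $k_s$ is precisely the inner-sum index in $\phi(s)$, because $\rankv(\tau,\rt s) = \beta^{k_s}(\tau_s)$. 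Conversely, a term of your left-hand side determines $\tau$ only after choosing an interleaving of the peels across components, preserving the order within each component and respecting the split into the innermost $|\alpha|$ and outermost $m-1$ peels, and one must verify that the number of such interleavings is exactly
\[
    \binom{|\alpha|}{\alpha}\prod_{e \in E_i}\binom{\alpha_e}{(m_s)_{s \in f_e}}
    \cdot \binom{m-1}{(k_s - m_s)_s},
\]
i.e., the product of the series-convention coefficient, the Leibniz multinomials, and the multinomials arising from multiplying the factors $u^{k_s - m_s}/(k_s - m_s)!$. This identity is true, so your plan can be completed, but it carries the entire content of the theorem, and asserting that you expect it to work is not a proof of it.
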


To prove this, we follow the structure of the argument as in Section 4 of \cite{tubings}, with
some added complexity. We will temporarily write $\psi$ for the right side of
\cref{eqn:tubing expansion 2}. Let $\sigma$ be the linear term of $\psi$, i.e.
\[
    \sigma(t) = \sum_{\tau \in \Tub(t)}a_{d(t), \beta^1(\tau)} \mel(\tau) .
\]
and $\sigma$ vanishes on disconnected forests.

\begin{lemma} \label{thm:tubing tensor lemma 1}
    For any tree $t$ and $k \ge 1$,
    \[
        \sigma^{*k}(t) = \sum_{\substack{\tau \in \Tub(t) \\ b(\tau) \ge k}} a_{d(t), \beta^k(\tau)}
        \mel(\tau).
    \]
\end{lemma}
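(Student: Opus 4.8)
The plan is to induct on $k$. The base case $k = 1$ is immediate: $\sigma^{*1} = \sigma$, and since $b(\tau) \ge 1$ for every binary tubing while $\beta^1(\tau) = \rankv(\tau, \rt t)$, the asserted formula is precisely the defining formula for $\sigma$.

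For the inductive step I would write $\sigma^{*(k+1)} = \sigma * \sigma^{*k}$ and expand through the downset/upset coproduct, so that $\sigma^{*(k+1)}(t) = \sum_{D} \sigma(D)\,\sigma^{*k}(t \setminus D)$, the sum over downsets $D$ of $t$. The crucial simplification is that $\sigma$ vanishes on disconnected forests and on the unit, so only the nonempty \emph{connected} downsets $D$ survive; and a connected downset of a rooted tree is exactly a principal downset $t' = t_w$, i.e. a proper subtree (one checks a connected downset has a unique maximal element). Its complement $t'' = t \setminus t'$ is then a connected upset containing $\rt t$, hence itself a tree with $d(t'') = d(t)$. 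Thus the sum runs over proper subtrees $t'$, and I can apply the definition of $\sigma$ to $t'$ and the inductive hypothesis to $t''$.

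The heart of the argument is to match this against the edge-decorated refinement of \cref{thm:recursive tubing}: a tubing $\tau$ of $t$ corresponds bijectively to a triple $(t', \tau', \tau'')$ with $t'$ a proper subtree, $\tau' \in \Tub(t')$, $\tau'' \in \Tub(t'')$, with $b(\tau) = b(\tau'') + 1$, and with $\rankv(\tau, v) = \rankv(\tau', v)$ for $v \in t'$, $\rankv(\tau, v) = \rankv(\tau'', v)$ for $v \in t'' \setminus \{\rt t\}$, and $\rankv(\tau, \rt t) = \rankv(\tau'', \rt t) + \mathbf 1_{e^*}$, where $e^*$ is the type of the new outermost upper tube $t''$, namely the decoration of the first edge on the path from $\rt t$ to $\rt{t'}$. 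From this bookkeeping two identities follow, to be checked term by term: first, restoring the root factor to $\mel(\tau')$,
\[
    \mel(\tau) = \mel(\tau')\, a_{d(t'), \beta^1(\tau')}\, \mel(\tau''),
\]
and second, since the upper tubes at $\rt t$ are $t''$ (outermost) followed by the upper tubes of $\tau''$ at $\rt{t''}$,
\[
    \beta^{k+1}(\tau) = \beta^k(\tau'').
\]
Combining these with $d(t'') = d(t)$, the generic term $\sigma(t')\,\sigma^{*k}(t'')$ for fixed $(\tau', \tau'')$ becomes exactly $a_{d(t), \beta^{k+1}(\tau)}\,\mel(\tau)$, and the constraint $b(\tau'') \ge k$ in the inductive hypothesis translates into $b(\tau) \ge k+1$. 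Summing over the bijection yields the claim. The case $|t| = 1$ must be handled separately since \cref{thm:recursive tubing} assumes $|t| > 1$; there $\sigma^{*k}(t) = 0$ for $k \ge 2$ (in each coproduct term one tensor factor is the unit, on which $\sigma$ and every $\sigma^{*j}$ with $j \ge 1$ vanish), matching the empty right-hand sum since $b(\tau) = 1 < k$.

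The main obstacle is the combinatorial bookkeeping of the edge decorations: one must correctly identify the type $e^*$ of the newly created outermost upper tube and confirm that the rank-vector recursion refines the scalar rank recursion of \cref{thm:recursive tubing} in the $\mathbf 1_{e^*}$ direction, and then verify that discarding the outermost $k$ upper tubes at $\rt t$ in $\tau$ corresponds precisely to discarding the outermost $k-1$ at $\rt{t''}$ in $\tau''$. Everything else reduces to a direct manipulation of the convolution together with the observation that $\sigma$ collapses the coproduct onto principal downsets.
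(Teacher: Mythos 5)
Your proposal is correct and follows essentially the same route as the paper's proof: induction on $k$, expanding $\sigma * \sigma^{*k}$ through the coproduct, using the infinitesimal-character property (together with $\sigma^{*k}(1)=0$) to restrict to proper subtrees, and then invoking the recursive bijection of \cref{thm:recursive tubing} with exactly the same two identities $\mel(\tau) = \mel(\tau')\,a_{d(t'),\beta^1(\tau')}\,\mel(\tau'')$ and $\beta^{k+1}(\tau) = \beta^k(\tau'')$, plus $b(\tau)=b(\tau'')+1$. Your explicit bookkeeping of the type $e^*$ of the outermost upper tube and your separate treatment of the one-vertex tree are slightly more careful than the paper's write-up but add nothing essentially different.
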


(Note that, $\sigma^{*k}$ can be nonzero on disconnected forests, but
we are claiming this equality only for trees (see also the remark after Lemma 4.3 in \cite{tubings}).)

\begin{proof}
    By induction on $k$. The base case is true by definition. Then
    \[
        \sigma^{*k+1}(t) = \sum_{f \in J(t)} \sigma(f) \sigma^{*k}(t \setminus f)
    \]
    but $\sigma$ is an infinitesimal character so $\sigma(f) \ne 0$ only if $f$ is actually a tree.
    Moreover, clearly $\sigma^{*k}(1) = 0$, so we can restrict the sum to proper subtrees $t'$. Then
    inductively we have
    \begin{align*}
        \sigma^{*k+1}(t)
        &= \sum_{t'} \sigma(t') \sigma^{*k}(t \setminus t') \\
        &= \sum_{t'} \sum_{\tau' \in \Tub(t')} \sum_{\substack{\tau'' \in \Tub(t \setminus t') \\
        b(\tau'') \ge k}} a_{d(t'), \beta^1(\tau')} \mel(\tau') a_{d(t),
        \beta^k(\tau'')} \mel(\tau'').
    \end{align*}
    By the recursive construction of tubings (\cref{thm:recursive tubing}), $\tau'$ and $\tau''$
    uniquely determine a tubing $\tau \in \Tub(t)$. Note that for any vertex $v \in t'$, the upper
    tubes of $\tau$ rooted at $v$ are the same as those of $\tau'$, so $\rankv(\tau, v) =
    \rankv(\tau', v)$. The same is true for vertices of $\tau''$ other than $\rt t$. Thus we have
    have
    \[
        \mel(\tau) = \mel(\tau')a_{d(t'), \rankv(\tau, \rt t')} \mel(\tau'') = \mel(\tau')
        a_{d(t'), \beta^1(\tau')} \mel(\tau'').
    \]
    Moreover, since $\beta^k$ ignores the outermost the $k - 1$ upper tubes containing $\rt t$ by
    definition, we have $\beta^{k+1}(\tau) = \beta^k(\tau'')$. Finally, there is one additional tube
    in $\tau$ containing the root, so $b(\tau) = b(\tau'') + 1$. Hence the triple sum simplifies to
    \[
        \sigma^{*k+1}(t) = \sum_{\substack{\tau \in \Tub(t) \\ b(\tau) \ge k+1}} a_{d(t),
        \beta^{k+1}(\tau)} \mel(\tau)
    \]
    as wanted.
\end{proof}

For notational convenience, for $\alpha \in \Nat^{E_i}$ let us write $\sigma^{[\alpha]}$ for the
linear form on $\CK_{D, \mathcal I}^{\otimes I_d}$ given by
\[
    \sigma^{[\alpha]} = \bigotimes_{e \in E_d} \sigma^{*\alpha_e}.
\]
Note that the coalgebra structure on $\CK_{I, \mathcal E}^{\otimes E_i}$ gives a convolution product
on linear forms; one easily checks that $\sigma^{[\alpha + \beta]} = \sigma^{[\alpha]} *
\sigma^{[\beta]}$.

\begin{lemma} \label{thm:tubing tensor lemma 2}
    For $i \in I$,
    \[
        \sigma \tilde B_+^{(i)} = \sum_{\alpha \in \Nat^{E_i}} \binom{|\alpha|}{\alpha} a_{i, \alpha}
        \sigma^{[\alpha]}.
    \]
\end{lemma}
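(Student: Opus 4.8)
The plan is to test both sides against an arbitrary basis tensor $\bigotimes_{e \in E_i} f_e$, with each $f_e \in \widetilde{\mathcal F}(I,\mathcal E)$ a forest, and to reduce the identity to a purely combinatorial statement about tubings. Writing $t = \tilde B_+^{(i)}\big(\bigotimes_{e} f_e\big)$, the left-hand side becomes $\sigma(t)$, and since the root $r = \rt t$ carries decoration $i$ we have $a_{d(t),\beta^1(\tau)}\mel(\tau) = \prod_{v \in t} a_{d(v),\rankv(\tau,v)}$; grouping the tubings of $t$ according to the value $\alpha := \rankv(\tau,r) = \beta^1(\tau)$ pulls the factor $a_{i,\alpha}$ out front. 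On the right-hand side, $\sigma^{[\alpha]}\big(\bigotimes_e f_e\big) = \prod_e \sigma^{*\alpha_e}(f_e)$. Thus the whole lemma reduces to the single identity
\[
\sum_{\substack{\tau \in \Tub(t) \\ \rankv(\tau,r) = \alpha}} \mel(\tau) = \binom{|\alpha|}{\alpha} \prod_{e \in E_i} \sigma^{*\alpha_e}(f_e)
\]
for every exponent vector $\alpha \in \Nat^{E_i}$.

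The heart of the argument is a decomposition of each binary tubing $\tau$ of $t$. The tubes of $\tau$ containing $r$ form a chain $t = X_0 \supsetneq X_1 \supsetneq \dots \supsetneq X_m = \{r\}$ with $m = |\alpha|$, and the successive differences $L_j = X_j \setminus X_{j+1}$ are the lower children of the chain, each a principal downset lying in a single component $c$ of some $f_e$; its type (in the sense of \cref{subsec tubing multiple}) is exactly that $e$, so the number of type-$e$ lower children is $\alpha_e$. I would then show that $\tau$ is equivalent to the following data: for each component $c$ (across all branches) a binary tubing $\hat\tau_c \in \Tub(c)$ together with the number $k_c \ge 1$ of lower children of the chain lying in $c$, subject to $b(\hat\tau_c) \ge k_c$; and an interleaving of all these peels into the single chain. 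Within one component the order of its peels is forced by $\hat\tau_c$, while across distinct components the interleaving is free and realizable, so the number of interleavings is the multinomial $\binom{|k|}{(k_c)_c}$ with $|k| = \sum_c k_c = |\alpha|$. The statistic bookkeeping is governed by \cref{thm:recursive tubing}: for a non-root vertex $v \in c$ one has $\rankv(\tau,v) = \rankv(\hat\tau_c,v)$, while at the component root the $k_c - 1$ outermost upper tubes get promoted into the chain at $r$, giving $\rankv(\tau,\rt c) = \beta^{k_c}(\hat\tau_c)$. Hence the contribution of $c$ to $\mel(\tau)$ is precisely the summand $a_{d(c),\beta^{k_c}(\hat\tau_c)}\mel(\hat\tau_c)$ appearing in \cref{thm:tubing tensor lemma 1}.

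Summing over $\hat\tau_c$ with $b(\hat\tau_c) \ge k_c$ turns each component's contribution into $\sigma^{*k_c}(c)$ by \cref{thm:tubing tensor lemma 1}, so that $\sum_{\rankv(\tau,r)=\alpha}\mel(\tau) = \sum_{(k_c)}\binom{|\alpha|}{(k_c)_c}\prod_c \sigma^{*k_c}(c)$, where the sum runs over all $(k_c)$ whose entries over the components of $f_e$ sum to $\alpha_e$ for each $e$. Finally I would regroup this by type: since $\sigma$ is an infinitesimal character, the forest convolution identity $\sigma^{*\alpha_e}(f_e) = \sum_{\sum_c k_c = \alpha_e}\binom{\alpha_e}{(k_c)}\prod_c \sigma^{*k_c}(c)$ holds, and the product of multinomials collapses as $\binom{|\alpha|}{\alpha}\prod_e\binom{\alpha_e}{(k_c)} = \binom{|\alpha|}{(k_c)}$, which yields exactly $\binom{|\alpha|}{\alpha}\prod_e \sigma^{*\alpha_e}(f_e)$ and completes the proof. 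The main obstacle is the second paragraph: one must verify carefully both that the across-component interleaving is genuinely unconstrained (so that the multinomial is correct) and the statistic identity $\rankv(\tau,\rt c) = \beta^{k_c}(\hat\tau_c)$, i.e.\ that merging the outermost tubes of a component into the root chain is exactly the operation packaged by the $\beta^{k}$-with-$b(\tau)\ge k$ form of \cref{thm:tubing tensor lemma 1}; everything else is routine.
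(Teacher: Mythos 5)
Your proposal is correct, and the key claims you flag as needing verification do in fact hold: the peels of the root chain are principal downsets each lying in a single component, any interleaving of the per-component peel sequences yields a valid binary tubing (since each component is a downset of $t$, a downset of the remaining part of one component is automatically a downset of the whole remaining tube), and the statistics transform exactly as you assert, with the last peel of a component switching from lower (in $\tau$) to upper (in $\hat\tau_c$) but being discarded by $\beta^{k_c}$ anyway. However, your route is genuinely different from the paper's. The paper splits $\sigma = \sum_{i,\alpha} a_{i,\alpha}\sigma_{i,\alpha}$ and proves $\sigma_{i,\alpha}\tilde B_+^{(i)} = \binom{|\alpha|}{\alpha}\sigma^{[\alpha]}$ by induction on $|\alpha|$: it conditions on the type $e$ of the \emph{outermost} upper tube only, peels off that single tube via \cref{thm:recursive tubing} to get $\sigma^{e}_{i,\alpha}\tilde B_+^{(i)} = \sigma^{[1_e]} * \sigma_{i,\alpha - 1_e}\tilde B_+^{(i)}$, applies the inductive hypothesis, and sums over $e$ using the multinomial Pascal recurrence; notably it never invokes \cref{thm:tubing tensor lemma 1}, only the convolution identity $\sigma^{[\alpha]}*\sigma^{[\beta]} = \sigma^{[\alpha+\beta]}$. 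Your argument is essentially this induction fully unrolled: you decompose the entire root chain in one step into per-component tubings with cut depths $k_c$ plus a free interleaving, so the multinomial coefficient appears directly as a count of interleavings rather than emerging from Pascal's recurrence, and you then need both \cref{thm:tubing tensor lemma 1} and the factorization of $\sigma^{*\alpha_e}$ over forest components (valid since $\sigma$ is an infinitesimal character). The trade-off is clear: the paper's induction confines the delicate combinatorics to a single application of \cref{thm:recursive tubing}, while your global bijection gives a more transparent picture of where the coefficient $\binom{|\alpha|}{\alpha}$ comes from, at the price of the heavier verification you correctly identify as the crux.
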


\begin{proof}
    For each $i \in I$ and $\alpha \in \Nat^{E_i}$ let $\sigma_{i, \alpha}$ be the infinitesimal
    character defined by
    \[
        \sigma_{i, \alpha}(t) = \begin{cases}
            \sum_{\tau \in \Tub(t), \beta^1(\tau) = \alpha} \mel(\tau) & d(t) = i \\
            0 & \text{otherwise}
        \end{cases}
    \]
    so that we have
    \[
        \sigma = \sum_{i \in I} \sum_{\alpha \in \Nat^{E_i}} a_{i, \alpha} \sigma_{i, \alpha}.
    \]
    Note that we have $\sigma_{i, \alpha}\tilde B_+^{(j)} = 0$ when $i \ne j$, so to get the desired
    formula it suffices to show that $\sigma_{i, \alpha}\tilde B_+^{(i)} = \binom{|\alpha|}{\alpha}
    \sigma^{[\alpha]}$ for $i \in I$ and $\alpha \in \Nat^{E_i}$. We do this by induction on $m =
    |\alpha|$.

    Now, for $\alpha = 0$ we have that $\sigma_{i,\alpha}(t) = 1$ if $t$ is the one-vertex tree with
    decoration $i$ and 0 otherwise. Of course $\sigma^{[0]}$ is 1 when each forest is empty and 0
    otherwise, so we do indeed see that $\sigma_{i,0}\tilde B_+^{(i)} = \sigma^{[0]}$ as wanted.

    Suppose now that $m > 0$ and that the desired identity holds for smaller values. Then
    $\sigma_{i, \alpha}$ vanishes on one-vertex trees, so all tubings of interest have at least one
    upper tube. For $e \in E_i$, let $\sigma_{i,\alpha}^{e}(t)$ be the sum only for those tubings
    where the outermost upper tube has type $e$. Thus
    \[
        \sigma_{i, \alpha} = \sum_{e \in E_i} \sigma_{i,\alpha}^{e}.
    \]
    Note $\sigma^e_{i, \alpha} = 0$ when $\alpha_e = 0$, as in this case there must be no tube of
    type $i$ containing the root. Suppose now that $\alpha_e \ne 0$ and $t = \tilde
    B_+^{(i)}\left(\bigotimes_{e' \in E_i} f_{e'}\right)$. Let $\tau$ be a binary tubing of $t$
    which recursively corresponds to $(\tau', \tau'')$. Then the outermost upper tube of $\tau$ is
    type $e$ precisely when the lower subtree $t'$ is contained in $f_e$. Moreover in this case we
    have $\beta^1(\tau) = \beta^1(\tau'') + 1_e$, where $1_e$ is the indicator vector for $e$.
    Then we have
    \[
        \sigma^e_{i, \alpha}(t) = \sum_{\substack{t' \subseteq f_i \\ \text{subtree}}} \sigma(t')
        \sigma_{i, \alpha - 1_i}(t \setminus t').
    \]
    Now $t \setminus t' = \tilde B_+^{(i)}\left(\bigotimes_{e'} f'_{e'}\right)$ where $f'_e = f_e
    \setminus t'$ and $f'_{e'} = f_{e'}$ for $e \ne e'$. It follows that
    \begin{align*}
        \sigma^e_{i, \alpha}B_+^{(i)}
        &= \sigma^{[1_e]} * \sigma_{i, \alpha - 1_e} \tilde B_+^{(i)} \\
        &= \sigma^{[1_e]} * \binom{m-1}{\alpha - 1_e} \sigma^{[\alpha - 1_e]} \\
        &= \binom{m-1}{\alpha - 1_e} \sigma^{[\alpha]}.
    \end{align*}

    Finally, by summing over the values of $e$ we get
    \[
        \sigma_{i, \alpha} \tilde B_+^{(i)} = \sum_{e \in E_i} \binom{m - 1}{\alpha - 1_e}
        \sigma^{[\alpha]} = \binom{m}{\alpha} \sigma^{[\alpha]}
    \]
    by the multinomial analogue of the Pascal recurrence.
\end{proof}

\begin{proof}[Proof of \cref{thm:tubing expansion 2}]
    Let $\Psi_i\colon \CKE_{I, \mathcal E}^{\otimes E_i} \to \KK[\mathbf L_i]$ be given by
    \[
        \Psi_i\left(\bigotimes_{e \in E_i} f_e\right) = \prod_{e \in E_i}
        \left(\psi(f_e)\big|_{L = L_e}\right).
    \]
    This is simply the map $\psi^{\otimes E_i}$ carried through the identification of
    $\KK[L]^{\otimes E_i}$ with $\KK[\mathbf L_i]$. Thus, our goal is to show $\psi \tilde B_+^{(i)}
    = \Lambda_i \Psi_i$; by uniqueness this shows $\phi = \psi$. Note that since $\psi$ is an
    algebra morphism, we have
    \[
        \Psi_i\left(\bigotimes_{e \in E_i} f_e\right)\bigg|_{L_e = L\,\forall e \in E_i} = \prod_{e
        \in E_i} \psi(f_e) = \psi\left(\prod_{e \in E_i} f_e\right).
    \]

    By \cref{thm:tubing tensor lemma 1} we have $\psi = \exp_*(L\sigma)$. Thus
    \[
        \frac{d}{d L} \psi \tilde B_+^{(i)}
        = (\psi * \sigma) \tilde B_+^{(i)}
        = \psi *_\delta \sigma \tilde B_+^{(i)}
    \]
    where $\delta$ is the coaction and the second equality is by \cref{thm:cocycle convolution}. But
    observe that
    \begin{align*}
        \left(\psi *_\delta \sigma\tilde B_+^{(i)}\right)\left(\bigotimes_{e \in E_i} f_e\right)
        &= \sum_{\substack{f_e' \in J(f_e) \\ \forall e \in E_i}} \psi\left(\prod_{e \in E_i}
        f'_e\right) \sigma\left(\tilde B_+^{(i)}\left(\bigotimes_{e \in e_i} (f_e \setminus f'_e)
        \right)\right) \\
        &= \sum_{\substack{f_e' \in J(f_e) \\ \forall e \in E_i}} \Psi_i\left(\bigotimes_{e \in E_i}
        f'_e\right) \sigma\left(\tilde B_+^{(i)}\left(\bigotimes_{e \in e_i} (f_e \setminus f'_e)
        \right)\right) \bigg|_{L_e = L\,\forall e \in E_i} \\
        &= \left(\Psi_i * \sigma \tilde B_+^{(i)}\right)\left(\bigotimes_{e \in E_i}
        f_e\right)\bigg|_{L_e = L\,\forall e \in E_i}.
    \end{align*}

    Thus we have
    \begin{align*}
        \frac{d}{d L} \psi \tilde B_+^{(i)}
        &= \Psi_i * \sigma \tilde B_+^{(i)} \big|_{L_e = L\,\forall e \in E_i} \\
        &= \left(\Psi_i * \sum_{\alpha \in \Nat^{E_i}} \binom{|\alpha|}{\alpha} a_{i,\alpha}
        \sigma^{[\alpha]}\right)\bigg|_{L_e = L\,\forall e \in E_i} & \text{by \cref{thm:tubing
        tensor lemma 2}} \\
        &= A_i(\partial/\partial L_e\colon e \in E_i) \Psi\big|_{L_e = L\,\forall e \in E_i} \\
        &= \frac{d}{dL} \Lambda_i\Psi.
    \end{align*}
    Since we also have
    \[
        \psi(\tilde B_+^{(i)} 1) = a_{i,0} = \Lambda_i(1)
    \]
    this implies $\psi \tilde B_+^{(i)} = \Lambda_i \Psi_i$, as wanted.
\end{proof}

Finally, to solve \cref{eqn:multidse single}
and \cref{eqn:multidse system}, we need to lift them to combinatorial versions on the Hopf algebra $\CKE_{P, \mathcal E}$
we introduced above, solve those, and then apply \cref{thm:tubing expansion 2} to
get a solution to the original equations. This time we will work in the full generality of systems
from the start. The combinatorial version of the system \cref{eqn:multidse system} is
\begin{equation} \label{eqn:multicdse system}
    T_i(x) = 1 + \sum_{p \in P_i} x^{w_p} \tilde B_+^{(p)}\left(\bigotimes_{e \in E_p} \mathbf
    T(x)^{\mu_e}\right).
\end{equation}
As in \cref{subsec dse}, we are slightly abusing notation here by neglecting to notate the obvious (but non-injective) map
$\CKE_{P,\mathcal E}[[x]]^{\otimes E_p} \to \CKE_{P,\mathcal E}^{\otimes E_p}[[x]]$. The formula
generalizes that of \cref{thm:cdse system solution}. Each vertex will now have several outdegree
vectors, one for each edge type: we write $\od_i(v, e)$ for the number of children of $v$ which have
decorations lying in $P_i$ and such that the edge connecting them has decoration $e$. These are
collected together into the outdegree vector $\odv(v, e) \in \Nat^I$.

\begin{theorem} \label{thm:multicdse system solution}
    The unique solution to \cref{eqn:multicdse system} is
    \[
        T_i(x) = 1 + \sum_{t \in \Trees(P_i)} \left(\prod_{v \in t} \prod_{e \in E_{d(v)}}
        \mu_e^{\underline{\odv(v, e)}}\right) \frac{tx^{w(t)}}{|\Aut(t)|}.
    \]
\end{theorem}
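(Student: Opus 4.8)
The plan is to follow the proof of \cref{thm:cdse single solution}, adding the bookkeeping needed to handle simultaneously the vector insertion exponents and the tensor product over edge types. Define $T_i(x)$ by the claimed formula and set $\tilde T_i(x) = T_i(x) - 1$, so that $\tilde T_i(x)$ is a kind of exponential generating function for trees with root decoration in $P_i$. For a forest $f \in \Forests(P)$ and $j \in I$, let $\kappa_j(f)$ denote the number of connected components of $f$ whose root has decoration in $P_j$.

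First I would record the forest expansion. Exactly as in \cref{thm:cdse single solution}, the compositional formula shows that divided powers of a single $\tilde T_j$ count forests all of whose components are rooted in $P_j$; since components with distinct root decorations are never isomorphic, taking the product over $j \in I$ and applying the multivariate binomial series gives, for any insertion exponent vector $\mu_e \in \KK^I$,
\[
    \mathbf T(x)^{\mu_e} = \sum_{f \in \Forests(P)} \left(\prod_{j \in I}(\mu_e)_j^{\underline{\kappa_j(f)}}\right)
    \left(\prod_{v \in f} \prod_{e' \in E_{d(v)}} \mu_{e'}^{\underline{\odv(v, e')}}\right)
    \frac{f x^{w(f)}}{|\Aut(f)|}.
\]
This is the exact analogue of the formula for $T(x)^u$ in the single-equation proof, with the scalar exponent replaced by a vector and the scalar $\kappa(f)$ replaced by the tuple $(\kappa_j(f))_{j \in I}$.

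Next I would substitute this into the right-hand side of \cref{eqn:multicdse system} and apply $\tilde B_+^{(p)}$. The crucial combinatorial input is that every tree $t \in \Trees(P_i)$ with root decoration $p \in P_i$ can be written uniquely as $t = \tilde B_+^{(p)}\left(\bigotimes_{e \in E_p} f_e\right)$, where $f_e \in \Forests(P)$ is the forest of subtrees attached to the root by edges of type $e$. Under this bijection I would verify three statistics. The weight is additive, $w(t) = w_p + \sum_{e \in E_p} w(f_e)$. The outdegree vectors of the non-root vertices are unchanged, while the children of the root attached by a type-$e$ edge are precisely the components of $f_e$, so $\od_j(\rt t, e) = \kappa_j(f_e)$ for every $j$, whence $\prod_{j}(\mu_e)_j^{\underline{\kappa_j(f_e)}} = \mu_e^{\underline{\odv(\rt t, e)}}$. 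Finally, since an automorphism must fix the root and preserve edge decorations, it may only permute isomorphic subtrees attached by edges of the \emph{same} type, so the automorphism group factors as $\Aut(t) \cong \prod_{e \in E_p} \Aut(f_e)$; this is exactly what is needed for the $|\Aut(f_e)|^{-1}$ factors coming from the tensor factors to assemble into $|\Aut(t)|^{-1}$.

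The main obstacle is organizing this bookkeeping consistently across the $|E_p|$ tensor factors: the product $\prod_{e \in E_p}\prod_{j}(\mu_e)_j^{\underline{\kappa_j(f_e)}}$ contributed by the tensor factors must be recognized as precisely the root's contribution $\prod_{e \in E_{d(\rt t)}} \mu_e^{\underline{\odv(\rt t, e)}}$ to the claimed formula, while the products over non-root vertices glue across the $f_e$ into the single product over all non-root vertices of $t$. With the identifications $\od_j(\rt t, e) = \kappa_j(f_e)$ and $\Aut(t) \cong \prod_e \Aut(f_e)$ in hand, summing over $p \in P_i$ and over the decompositions reproduces the claimed formula for $T_i(x)$, so it solves \cref{eqn:multicdse system}. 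Uniqueness is immediate: since $w_p \ge 1$, the coefficient of $x^n$ on the right-hand side depends only on lower-order coefficients, so the system determines each $T_i(x)$ recursively.
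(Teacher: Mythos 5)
Your proof is correct and takes essentially the same approach as the paper: the paper's own proof of this theorem is literally the single line ``Analogous to \cref{thm:cdse single solution},'' and your write-up fills in that analogy exactly as intended. The three pieces of bookkeeping you isolate --- the multivariate forest expansion with the statistics $\kappa_j(f)$, the identification $\od_j(\rt t, e) = \kappa_j(f_e)$ under the decomposition $t = \tilde B_+^{(p)}\left(\bigotimes_{e \in E_p} f_e\right)$, and the factorization $\Aut(t) \cong \prod_{e \in E_p} \Aut(f_e)$ --- are precisely what is needed to carry the Bergbauer--Kreimer argument over to the edge-decorated, vector-exponent setting.
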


\begin{proof}
    Analogous to \cref{thm:cdse single solution}.
\end{proof}

As before, we can get a combinatorial expansion for the solution of the Dyson--Schwinger equation by
applying our results on 1-cocycles.

\begin{theorem} \label{thm:multidse system solution}
    The unique solution to \cref{eqn:multidse system} is
    \[
        G_i(x, L) = 1 + \sum_{t \in \Trees(P_i)} \left(\prod_{v \in t} \prod_{e \in E_{d(v)}}
        \mu_e^{\underline{\odv(v, e)}}\right)\sum_{\tau \in \Tub(t)} \mel(\tau) \sum_{k=1}^{b(\tau)}
        a_{d(t),\beta^k(\tau)} \frac{x^{w(t)}L^k}{|\Aut(t)|k!}.
    \]
\end{theorem}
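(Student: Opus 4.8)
The plan is to follow exactly the two-step strategy used for the single insertion place systems in \cref{thm:dse tubing expansion 1 system}: first solve the combinatorial lift of the system on $\CKE_{P,\mathcal E}$, then push that solution forward along the universal map to $\KK[L]$ and read off its tubing expansion. All three ingredients we need are already in hand---the combinatorial solution \cref{thm:multicdse system solution}, the universal property \cref{thm:ck universal multilinear 2}, and the tubing expansion \cref{thm:tubing expansion 2}---so the work is really one of assembly.

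First I would let $\phi\colon \CKE_{P,\mathcal E} \to \KK[L]$ be the bialgebra morphism guaranteed by \cref{thm:ck universal multilinear 2}, namely the unique algebra morphism with $\phi \tilde B_+^{(p)} = \Lambda_p \phi^{\otimes E_p}$ for each $p \in P$, and set $G_i(x, L) = \phi(T_i(x))$, where $\mathbf T(x)$ is the combinatorial solution from \cref{thm:multicdse system solution}. Applying $\phi$ coefficientwise in $x$ to both sides of \cref{eqn:multicdse system} and using the intertwining relation, each summand $\tilde B_+^{(p)}\bigl(\bigotimes_{e \in E_p} \mathbf T(x)^{\mu_e}\bigr)$ is carried to $\Lambda_p\bigl(\bigotimes_{e \in E_p} \mathbf G(x, L_e)^{\mu_e}\bigr)$, the $E_p$ tensor factors landing in the distinct variables $L_e$ under the identification of $\KK[L]^{\otimes E_p}$ with $\KK[\mathbf L_p]$ and the powers being respected because $\phi$ is an algebra morphism. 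This exhibits $G_i(x,L)$ as a solution of \cref{eqn:multidse system}. Uniqueness is then immediate: every term on the right of \cref{eqn:multidse system} carries a factor $x^{w_p}$ with $w_p \ge 1$, so extracting the coefficient of $x^n$ determines it recursively from lower-order coefficients.

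It remains to make the expansion explicit. Substituting the closed form for $T_i(x)$ from \cref{thm:multicdse system solution} and using linearity of $\phi$, I would obtain
\[
    G_i(x, L) = 1 + \sum_{t \in \Trees(P_i)} \left(\prod_{v \in t} \prod_{e \in E_{d(v)}} \mu_e^{\underline{\odv(v, e)}}\right) \frac{x^{w(t)}}{|\Aut(t)|}\, \phi(t),
\]
and then inserting the tubing expansion $\phi(t) = \sum_{\tau \in \Tub(t)} \mel(\tau) \sum_{k=1}^{b(\tau)} a_{d(t),\beta^k(\tau)} L^k/k!$ from \cref{thm:tubing expansion 2} yields precisely the claimed formula.

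The main obstacle is not located in this assembly, which is routine once the pieces are assembled; the genuine combinatorial content has already been absorbed into \cref{thm:tubing expansion 2}. The one point that will require care is the coefficientwise and tensor bookkeeping in the first step: one must verify that pushing the combinatorial system through $\phi$ really reproduces \cref{eqn:multidse system} with the correct assignment of the variables $L_e$ to the insertion places, which is exactly the content of the identification $\KK[L]^{\otimes E_p} \cong \KK[\mathbf L_p]$ built into the definition of the $\Lambda_p$.
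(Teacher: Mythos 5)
Your proposal is correct and follows exactly the paper's argument: the paper's proof of this theorem is the one-line observation that it is immediate from \cref{thm:multicdse system solution} and \cref{thm:tubing expansion 2}, which is precisely the assembly you carry out (lift to the combinatorial system, push forward along the universal map from \cref{thm:ck universal multilinear 2}, and substitute the tubing expansion). The details you supply---the intertwining relation, the identification $\KK[L]^{\otimes E_p} \cong \KK[\mathbf L_p]$, and uniqueness via the $x^{w_p}$-recursion---are the same ones the paper leaves implicit.
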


\begin{proof}
    Immediate from \cref{thm:multicdse system solution} and \cref{thm:tubing expansion 2}.
\end{proof}

\section{Conclusion}\label{sec conclusion}

We have given a method to give combinatorially indexed and understandable series solutions to Dyson--Schwinger equations and systems thereof with distinguished insertion places.  This concludes the single scale portion of the long-standing program one of us has to solve Dyson--Schwinger equations combinatorially.  These solutions are powerful in that they give us combinatorial control over the expansions.  So far this has resulted in physically interesting results on the leading log hierarchy \cite{cy17, cy20} and on resurgence \cite{bdy}, both of which would be interesting to investigate in the multiple insertion place case.

Paul Balduf has done some numerical computations of solutions to Dyson--Schwinger equations with two
insertion places.  See pp. 44, 45 of \cite{Bslides} where he plots the two parameters controlling the exponential growth rate of the coefficients of the anomalous dimension (what he calls $\mu$ and $\lambda$ in the form $(-\lambda)^{-n}\Gamma(n-\mu)$) as a function of the power of the insertions (in our notation $\mu_1-1$ and $\mu_2-1$).  Interestingly, when the two insertion places
degenerate into one by setting the power of one of the insertions to $-1$ (that is, one of the $\mu_i = 0$), then
the function does not appear to be differentiable based on the result of the numerical computation.
It is not clear why this should be, and we certainly have no combinatorial understanding in this
direction at present.

The program of solving Dyson--Schwinger equations with combinatorial expansions has also been interesting for pure combinatorics, first contributing to a renaissance in chord diagram combinatorics, and now suggesting connections to other notions of tubings in combinatorics, eg \cite{carr-devadoss, galashin:tubings}, which deserve further investigation.

The way in which trees with edge decorations and their Hopf algebraic structures appear in the study of regularity structures, as described in \cite{BHZregularity}, has a very similar flavour to what we see in our work on Dyson--Schwinger equations.  It would be very interesting to investigate a more precise connection between these areas.

The main case of Dyson--Schwinger equation still outstanding is the case of non-single scale insertions as occurs in general vertex insertions.  Non-single scale insertions can be addressed in the present framework by choosing one scale and then capturing the remaining kinematic dependence in dimensionless parameters, however this is inelegant in requiring a choice, and does not give any combinatorial or algebraic insight into the interplay of the different scales.

We have also used the Riordan group to more clearly explain the connection between Dyson--Schwinger equations and the renormalization group equation clarifying the known results in the single insertion case and generalizing to the multiple insertion place case.  We proved and disproved conjectures of Nabergall \cite{nabergall:phd}.

\printbibliography

\end{document}